\newtheorem{theorem}{Theorem}
\newtheorem*{theorem*}{Theorem}
\newtheorem{corollary}[theorem]{Corollary}
\newtheorem{proposition}[theorem]{Proposition}
\newtheorem{lemma}[theorem]{Lemma}
\newtheorem{conjecture}{Conjecture}
\newtheorem*{conjecture*}{Conjecture}
\newtheorem*{wilson}{Wilson's Theorem}
\newtheorem*{dirichlet}{Dirichlet's Theorem}
\newtheorem*{chev}{Chevalley-Warning Theorem}
\theoremstyle{definition}
\newtheorem*{example}{Example}
\newtheorem{definition}[theorem]{Definition}
\newtheorem{remark}[theorem]{Remark}
\newcommand\xleftrightarrow[2][]{%
  \ext@arrow 9999{\longleftrightarrowfill@}{#1}{#2}} 
\newcommand\longleftrightarrowfill@{%
  \arrowfill@\leftarrow\relbar\rightarrow}
\def\V{\mathcal{V}}
\def\E{\mathcal{E}}
\def\lcm{\text{lcm}}
\begin{document}

\title{Properties of the Extended Graph Permanent}
\author{Iain Crump\\Simon Fraser University, Burnaby, B.C.}
\maketitle

\normalsize
\vspace{1cm}

\begin{abstract}

We create for all graphs a new invariant, an infinite sequence of residues from prime order finite fields, constructed from the permanent of a reduced incidence matrix. Motivated by a desire to better understand the Feynman period in $\phi^4$ theory, we show that this invariant is preserved by all graph operations known to preserve the period. We further establish properties of this sequence, including computation techniques and alternate interpretations as the point count of a novel polynomial. 
\end{abstract}

\section{Introduction}

Let $G$ be a graph with $n$ edges. The \emph{period} of $G$ is a residue of the Feynman integral of $G$ viewed as a Feynman diagram in massless scalar field theory. This paper takes its physical motivation from graphs in $\phi^4$ theory, a field theory in which vertices have degree at most four. In particular, a $k$-point $\phi^4$ graph $G$ has $\sum_{v \in V(G)} \deg (v) = 4|V(G)|-k$, and when considering the motivating physics we restrict to $4$-point graphs in $\phi^4$ theory. These graphs can be uniquely derived from $4$-regular graphs by deleting a single vertex. Assigning variables $x_e$ to all edges $e \in E(G)$, the Feynman period in $\phi^4$ theory is $$\int_{x_2 \geq 0} \cdots \int_{x_n \geq 0} \frac{1}{\Psi^2|_{x_1=1}} \prod_{j=2}^{n}\mathrm{d}x_j,$$ where $\Psi$ is the Kirchhoff polynomial, \begin{align} \label{kirchhoff}\Psi = \sum_{\substack{T\text{ spanning} \\ \text{tree of }G}}\prod_{e\not\in E(T)}x_e.\end{align} This is a simplified version of the full Feynman integral, which would normally contain terms in the numerator related to the masses, momenta, and general structure of the graph. The period nonetheless retains much of the number theoretic content of the full Feynman integral.

The Feynman period plays an important role physically. The Feynman integral typically diverges, and the period appears as the coefficient at infinity. Further, it is known precisely when the period diverges for $4$-point $\phi^4$ graphs. The \emph{loop number} or \emph{first Betti number} of a graph $G$, $h_G$, is the minimum number of edges that must be removed to produce an acyclic graph. A graph is \emph{primitive} if $|E(G)| = 2h_G$ and for all non-empty proper subgraphs $H$, $|E(H)| > 2h_H$. It is known (Proposition 5.2 in \cite{BlEsKr}) that the period of a $\phi^4$ graph converges if and only if the graph is primitive. When primitive, the period is invariant under choice of renormalization scheme.

While computationally difficult, there are three graphic operations known to preserve the period in $\phi^4$ theory; the Schnetz twist, completion followed by decompletion, and planar duality (see \cite{Sphi4}). All $4$-point graphs in scalar $\phi^4$ theory can be derived from a $4$-regular graph by deleting a single vertex. This deletion is known as \emph{decompletion}, and the unique way of adding a vertex back to create a $4$-regular graph is \emph{completion}. The Schnetz twist is an operation on the edges incident to a $4$-vertex cut on a completed graph, and can be seen in Figure \ref{twist}. There is an additional computational convenience in that $4$-point $\phi^4$ graphs with $2$-vertex cuts can be split across the cut as in Figure \ref{2cut}, and the period of the original graph is the product of the periods of the two minors. Currently, all known instances where $4$-point $\phi^4$ graphs have equal periods can be explained by these operations.

It follows, then, that graph invariants that are preserved by these operations are of particular interest, as they may further our understanding of the period. Currently, the \emph{$c_2$ invariant} (see \cite{BrS}) and the \emph{Hepp bound} (see \cite{Schhepp}) are conjectured to do this. The \emph{graph permanent}, introduced in \cite{crump}, is a non-trivial invariant that is known to be preserved by all these operations. Derived from the permanent of a matrix created from a signed incidence matrix and an arbitrary orientation of the graph, the graph permanent is unfortunately only defined for graphs $G$ such that $|E(G)| = k(|V(G)|-1)$ for some $k \in \mathbb{Z}_{>0}$ (which does include $4$-point $\phi^4$ graphs, at $k=2$), and produces a value in $\{ 0, \pm 1, \pm 2, ... , \pm \lfloor \frac{k+1}{2} \rfloor \}$ for these graphs. For $4$-point $\phi^4$ graphs, it therefore is a binary invariant, producing values in $\{0, \pm 1\}$.

Herein, we produce a natural extension of the graph permanent, the \emph{extended graph permanent}. It is an extension in that it produces an infinite, nontrivial sequence for a graph, hence potentially providing more information for each graph, and further that it is defined for all graphs. In Section \ref{graphoperations}, we will show that the extended graph permanent behaves as desired under the aforementioned graph operations.

\begin{theorem} Let $\Gamma$ be a $4$-regular graph. 
\begin{itemize}
\item Any two decompletions of $\Gamma$ have the same extended graph permanent. (Theorem \ref{egpcompletion})
\item If $\Gamma$ and $\Gamma'$ differ by a Schnetz twist, then any decompletions of $\Gamma$ and $\Gamma'$ have equal extended graph permanents. (Proposition \ref{schnetz})
\item If a graph $G = \Gamma -v$, $v \in V(\Gamma)$, is planar and has planar dual $G^*$, the extended graph permanents of $G$ and $G^*$ are equal. (Theorem \ref{dual})
\end{itemize} \end{theorem}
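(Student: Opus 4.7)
My plan is to handle the three bullets by exploiting the fact that the extended graph permanent is, at each prime $p$, the permanent (over $\mathbb{F}_p$) of a signed reduced incidence matrix of $G$. Since the invariance results for the original graph permanent in \cite{crump} are ultimately matrix-level identities, the strategy is to re-run each such identity modulo every prime $p$ simultaneously, so that the invariance passes from the single value of the graph permanent to the whole mod-$p$ sequence. In particular, proofs built from integer row/column operations, permutations, and sign flips will descend to $\mathbb{F}_p$ for free, while anything that previously relied on a characteristic-zero cancellation or on absolute value bounds will need to be re-examined.

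For the equality of the extended permanents of two different decompletions $\Gamma - v$ and $\Gamma - w$ of the $4$-regular graph $\Gamma$, I would start from the observation that the full signed incidence matrix of $\Gamma$ has its rows summing to zero once an orientation is fixed, so the rows are linearly dependent in an explicit way. This dependence should force the permanents of the two row-deleted matrices to agree up to a global sign that can be absorbed by reordering rows and/or reversing the orientation of selected edges. The point to check is that these operations stay valid modulo every prime $p$, which they do since they are performed by integer manipulations.

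For the Schnetz twist, the operation is local to the four edges crossing a $4$-vertex cut, so only those four columns of the incidence matrix are affected, and the twist simply reroutes endpoints among rows corresponding to the two sides of the cut. I would write out the relevant sub-block explicitly and exhibit a row/column permutation, combined with sign changes on certain rows (absorbed by re-orienting the affected edges), that carries the post-twist matrix back to the pre-twist matrix. Because the permanent is invariant under such permutations and sign changes up to a computable global sign, equality of the mod-$p$ residues for every prime follows at once; the only real content is bookkeeping the sign produced by the cut-crossing edges.

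The planar duality case is where I expect the main obstacle. The extended graph permanent of $G$ is built from its incidence matrix, while the natural matrix associated to $G^*$ mixes the roles played by vertices and faces of $G$. To compare the two, I would try to describe the $G^*$ permanent in the language of spanning subgraph or cycle-cover data of $G$ via the correspondence between bonds of $G$ and cycles of $G^*$, and then match terms on each side; the Kirchhoff polynomial (\ref{kirchhoff}) and its behaviour under planar duality suggest this is the right translation device. The delicate part will be aligning the two orientations (one on $G$ and one on $G^*$) so that no residual sign obstructs the comparison, and ensuring the reconciliation works at every prime $p$ rather than only in characteristic zero; I anticipate that a careful choice of dual orientation, perhaps guided by a fixed planar embedding, will be needed to make everything cancel uniformly in $p$.
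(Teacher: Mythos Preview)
Your proposal has genuine gaps in the first two bullets, and is too vague in the third.

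\textbf{Decompletion.} You conflate two different deletions. The row-sum-zero relation in the full incidence matrix of $\Gamma$ is what underlies invariance under the choice of \emph{special vertex} (Theorem~\ref{specialinvariant}): deleting row $v$ versus row $w$ from the same matrix. But decompletion deletes a \emph{vertex} of $\Gamma$, which removes both a row and all columns for edges incident to that vertex. The matrices $M_{\Gamma-v}$ and $M_{\Gamma-w}$ therefore have different column sets, and no sequence of integer row operations, permutations, or sign flips can carry one to the other. The paper does not argue at the matrix level here; it works with the combinatorial interpretation of a contribution to the permanent as an edge-tagging (Remark~\ref{casualintro}), observes that in a $d$-regular decompletion every non-special vertex receives exactly $d/2$ tags (Remark~\ref{evendegree}), and then exhibits a bijection between taggings of $(\Gamma-v)^{[n]}$ and $(\Gamma-w)^{[n]}$ by extending to $\Gamma^{[n]}$ and reversing every tag. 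Your proposal contains none of this structure.

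\textbf{Schnetz twist.} The twist is not local to ``the four edges crossing a $4$-vertex cut''. It acts on \emph{all} edges on one side of the cut, swapping their endpoints $v_1\leftrightarrow v_2$ and $v_3\leftrightarrow v_4$; any number of columns are affected. A simple permutation-and-sign argument on four columns will not produce the result. The paper again argues via taggings: with $v_3$ special and $v_4$ the decompletion vertex, a careful count (using regularity) shows that if $v_1$ receives $t$ tags on one side then $v_2$ receives $d_1-t$, and reversing every tag on the other side exactly implements the twist while preserving each contribution's sign. The regularity hypothesis is doing real work in this count, which your outline never invokes.

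\textbf{Duality.} Here your instinct to translate via the bond/cycle correspondence is in the right direction, but the paper makes it precise through matroid duality at the matrix level: row-reduce $M_G$ to $[I_r\mid A]$ and $M_{G^*}$ to $[-A^T\mid I_{n-r}]$ (Theorem~\ref{oxleydual}), using total unimodularity to guarantee only $\pm1$ scalings so the permanent residue is preserved (Corollary~\ref{constmultrowred}). Cofactor expansion over the identity blocks then reduces both sides to permanents of block matrices built from $A$, differing only by explicit factorial factors, and Wilson's theorem together with Lemma~\ref{anotherwilsoncor} controls those factors. The Kirchhoff polynomial does not enter. Your plan to ``match terms'' via spanning-subgraph data does not yet identify the mechanism that makes the factorial prefactors cancel modulo $p$, which is where the actual content lies.
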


\noindent This gives rise to the following conjecture.

\begin{conjecture*}[Conjecture \ref{obviousconjecture}] If two $4$-point $\phi^4$ graphs have equal periods, then they have equal extended graph permanents. \end{conjecture*}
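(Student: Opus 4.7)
The natural plan is to reduce the conjecture to the preceding theorem. If one could show that any two $4$-point $\phi^4$ graphs with equal periods are connected by a finite sequence of completions/decompletions, Schnetz twists, and planar dualities, then iterated application of the three bullet points of the theorem would give the conclusion immediately. The introduction essentially isolates this reduction: it notes that all currently known period coincidences are explained by precisely these operations, so the conjecture is equivalent, assuming that catalogue is complete, to the theorem already proved.

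The central obstacle, then, is exactly this open structural question about the period. Producing such a connecting sequence between arbitrary period-equal graphs is a well-known unresolved problem, and I would not expect a purely graph-theoretic attack to succeed without substantial new input. Accordingly, I would try to approach the conjecture more indirectly, by building a bridge between the period integral and the extended graph permanent via the Kirchhoff polynomial $\Psi$. The period is defined as an integral of $1/\Psi^2$; the extended graph permanent, per the abstract, admits a reinterpretation as the point count of a polynomial derived from the signed incidence data of $G$. I would try to identify this polynomial with a variety naturally attached to $\Psi$ (for instance, something comparable to the $c_2$-invariant variety $\{\Psi=0\}$ or its second symmetric power tied to $\Psi^2$), so that each prime-$p$ residue in the extended permanent sequence becomes expressible in terms of $\Psi$-data known or conjectured to be period-invariant.

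Concretely, the key intermediate step I would attempt is the following: show that for each prime $p$, the $p$-th residue of the extended graph permanent agrees, up to a universal normalization, with a point count over $\mathbb{F}_p$ of a variety defined in terms of $\Psi$. Combined with the already-conjectured period invariance of the $c_2$ invariant (which is itself such a point count), this would let one transfer period equalities to equalities of residues term-by-term, yielding the conjecture without having to resolve the structural question. The hard part will be establishing such a $\Psi$-theoretic reformulation: the extended permanent is built from the edge-vertex incidence matrix rather than from spanning trees directly, so matching it to the spanning-tree generating polynomial $\Psi$ likely requires a Cauchy-Binet or matrix-tree style identity applied prime by prime.

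As a sanity check in parallel, I would verify the conjecture computationally on the tabulated families of $4$-point $\phi^4$ graphs with known equal periods at low loop order, using the computation techniques promised later in the paper; any failure mode there would sharpen the conjecture, and consistent agreement would lend empirical weight while the theoretical bridge above is developed.
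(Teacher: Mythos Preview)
The statement is a \emph{conjecture}, and the paper does not prove it. There is no proof to compare your proposal against: the paper motivates the conjecture precisely by the theorem on completion/decompletion, Schnetz twist, and duality, together with the observation that all currently known period coincidences arise from these operations, and then leaves it open. Your first paragraph reproduces exactly this motivation and correctly identifies the structural obstruction.

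Your second, indirect approach has a genuine gap. The paper does carry out part of what you describe: Section~\ref{affinev} shows that each $p$-th residue of the extended graph permanent is, up to an explicit sign, the $\mathbb{F}_p$-point count of a polynomial $\widetilde{F}_{G,v'}$. But that polynomial is built from the signed incidence matrix, not from $\Psi$, and the paper establishes no identification between $[\widetilde{F}_{G,v'}]_p$ and any $\Psi$-derived quantity such as $c_2$. Worse, the data in the paper (see the conclusion and Appendix~\ref{chartofgraphs}) show that the extended graph permanent and the $c_2$ invariant are genuinely different invariants: there are graphs with equal extended graph permanents and distinct $c_2$, and conversely the paper only finds $c_2$ sequences matching extended graph permanents of \emph{different} graphs. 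So a term-by-term transfer via $c_2$ cannot work as stated. And even if some $\Psi$-theoretic reformulation existed, your plan would reduce Conjecture~\ref{obviousconjecture} to the period-invariance of $c_2$, which is itself open (only duality is proved for $c_2$), so you would not have resolved anything.

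Your final paragraph, the computational sanity check, is exactly what the paper does in Appendix~\ref{chartofgraphs}; the results there are consistent with the conjecture and are the actual evidence the paper offers.
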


\noindent The converse of this conjecture appears to be false; multiple sets of graphs appear to have equal sequences, which can be seen in Appendix \ref{chartofgraphs}. These theorems and this conjecture do suggest, though, that the extended graph permanent may be a useful invariant in the study of the Feynman period, as it may create additional methods to understanding the Feynman period.

The extended graph permanent is constructed from a matrix permanent, so cofactor expansion is a useful method of computation. Cofactor expansion can actually be used to produce a closed form for the values of the sequence. This will be examined in Section \ref{egpcomp}, and closed forms for the zig-zag and wheel families are developed. 

Some of the first sequences examined were familiar, comparable up to sign with $c_2$ invariant sequences from \cite{BSModForms}. The values in $c_2$ sequences correspond to residues in $\mathbb{F}_p$ for all primes $p$. In turn, it was observed that these $c_2$ values match the $p^\text{th}$ Fourier coefficient of modular forms over the integers taken modulo $p$ for a finite number of initial primes for which they were computed. A number of these sequences were further shown to match the modular forms completely in \cite{BSModForms} and \cite{Logan}. As such, it was asked if our sequences could be constructed like the $c_2$ invariant; as the point count of a polynomial over finite fields\footnote{Special thanks to Dr. Francis Brown, who first posed this question, and also directed us to the Chevalley-Warning Theorem and Theorem \ref{chevwarn}.}. We explore this in section \ref{affinev}, and in doing so derived the following novel polynomial. Let $G$ be a graph with an arbitrary edge orientation. For $v \in V(G)$, let $\delta^+(v)$ be the set of edges in an oriented graph directed towards a vertex $v$ and $\delta^-(v)$ the edges oriented away from $v$. Define $L = \text{lcm}(|E(G)|,|V(G)|-1)$, $\V = \frac{L}{|V(G)|-1}$, $\E = \frac{L}{|E(G)|}$, and $G^{[\E]}$ the graph created from $G$ by replacing all edges with $\E$ edges in parallel, preserving orientation. Create a variable $x_e$ for all edges $e \in E \left( G^{[\E]} \right)$ and choose an arbitrary vertex $v' \in V(G)$. We then define the polynomial $$\widetilde{F}_{G,v'} = \prod_{\substack{v \in V(G) \\ v \neq v'}} \left( \sum_{e \in \delta^+(v)} x_e^\V - \sum_{e \in \delta^-(v)} x_e^\V \right).$$  While this polynomial is dependent on the choice of vertex $v'$, our interest lies in the point count over a finite field $\mathbb{F}_p$; the number of zeroes of the polynomial over this finite field. For a polynomial $f$, we denote the point count over $\mathbb{F}_p$ as $[f]_p$. Let $\text{GPerm}^{[p]}(G)$ be the value of the extended graph permanent for a graph $G$ at prime $p$.

\begin{theorem}[Corollary \ref{maincor}] Let $G$ be a (not necessarily connected) $4$-point $\phi^4$ graph, and $v' \in V(G)$. Regardless of choice of vertex $v'$ and for odd prime $p$, the extended graph permanent of $G$ at $p$ is $$ \text{GPerm}^{[p]}(G) \equiv \begin{cases} [\widetilde{F}_{G,v'}]_p \pmod{p} \text{ if } |E(G)| \equiv 0 \pmod{4} \\ -[\widetilde{F}_{G,v'}]_p \pmod{p} \text{ otherwise} \end{cases} .$$ \end{theorem}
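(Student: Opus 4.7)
The plan is to apply the Chevalley--Warning point-count trick to $\widetilde{F}_{G,v'}$, reducing $[\widetilde{F}_{G,v'}]_p \bmod p$ to a single coefficient of a polynomial in the signed incidences of $G$, and then to recognise that coefficient as (a signed multiple of) $\mathrm{GPerm}^{[p]}(G)$.

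First I would specialise the setup. Since $G$ is $4$-point $\phi^4$, $|E(G)| = 2(|V(G)|-1)$, so $L = |E(G)|$, $\V = 2$, and $\E = 1$; in particular $G^{[\E]} = G$, and
$$\widetilde{F}_{G,v'}(x) \;=\; \prod_{v \neq v'}\!\Bigl(\sum_{e \in \delta^+(v)} x_e^2 - \sum_{e \in \delta^-(v)} x_e^2\Bigr) \;=\; F(x_1^2,\ldots,x_n^2),$$
where $F(y) := \prod_{v \neq v'} f_v(y)$ and $f_v(y) := \sum_e E_{v,e}\, y_e$ is built from the signed incidence entries $E_{v,e}\in\{-1,0,1\}$. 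Note that $F$ is homogeneous of degree $|V|-1 = |E|/2$, hence $F^{p-1}$ is homogeneous of degree $(p-1)|E|/2$.

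Next I would run the Fermat identity underlying Chevalley--Warning: modulo $p$,
$$[\widetilde{F}_{G,v'}]_p \;\equiv\; \sum_{x \in \mathbb{F}_p^{|E|}} \bigl(1 - \widetilde{F}_{G,v'}(x)^{p-1}\bigr) \;\equiv\; -\sum_{x \in \mathbb{F}_p^{|E|}} F(x^2)^{p-1}.$$
Writing $F^{p-1} = \sum_\alpha c_\alpha\, y^\alpha$ and substituting $y_e = x_e^2$, each term contributes $c_\alpha \prod_e \sum_{x_e\in\mathbb{F}_p} x_e^{2\alpha_e}$. Using $\sum_{t\in\mathbb{F}_p} t^m = -1$ when $p-1 \mid m$ and $m>0$, otherwise $0$, only those $\alpha$ with $2\alpha_e$ a positive multiple of $p-1$ survive. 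Combined with the homogeneity of $F^{p-1}$, the degree accounting forces $\alpha_e = (p-1)/2$ for every $e$, producing
$$[\widetilde{F}_{G,v'}]_p \;\equiv\; -(-1)^{|E|}\,c_{\alpha^\ast} \;\equiv\; -c_{\alpha^\ast} \pmod{p},$$
where $c_{\alpha^\ast}$ is the coefficient of $\prod_e y_e^{(p-1)/2}$ in $F(y)^{p-1}$.

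Finally I would identify $c_{\alpha^\ast}$ with the permanent computing $\mathrm{GPerm}^{[p]}(G)$. Expanding $F^{p-1} = \prod_{v\neq v'} f_v^{p-1}$ multinomially and collecting the coefficient of $\prod_e y_e^{(p-1)/2}$ gives a weighted sum over tuples $(\beta^{(v)})_{v\neq v'}$ of compositions with $|\beta^{(v)}|=p-1$ and $\sum_v \beta^{(v)}_e = (p-1)/2$; this is precisely the combinatorial object computing the permanent-theoretic definition of $\mathrm{GPerm}^{[p]}(G)$, up to the normalising sign convention fixed earlier in the paper. The $|E(G)|\bmod 4$ dichotomy in the statement arises from matching that convention against the factor $-(-1)^{|E|}$ above; since $|E|$ is even for $\phi^4$ graphs, the parity in question is that of $|E|/2 = |V|-1$, which exactly distinguishes $|E| \equiv 0$ from $|E| \equiv 2 \pmod{4}$.

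The main obstacle is the sign bookkeeping in the last step: the Chevalley--Warning manipulation and the homogeneity-forced exponent are routine, but tracking how the multinomial weights, the ordering of repeated rows in the extended permanent matrix, and the fixed sign convention of $\mathrm{GPerm}^{[p]}$ combine to produce the $|E|\bmod 4$ dichotomy is the delicate part. Independence of the choice of $v'$ then follows automatically: $\mathrm{GPerm}^{[p]}(G)$ is a graph invariant, so the congruence forces $[\widetilde{F}_{G,v'}]_p$ to be independent of $v'$ modulo $p$.
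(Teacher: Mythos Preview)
Your approach is essentially the paper's: both use the Chevalley--Warning coefficient trick to relate $[\widetilde{F}_{G,v'}]_p$ to a single coefficient of $F^{p-1}$, and then connect that coefficient to the block-matrix permanent. The paper runs the chain as permanent $\to$ coefficient of the ``permanent function'' $\to$ point count (Remark~\ref{extensionmatrix}, Proposition~\ref{polyextension}, Lemma~\ref{polyswitch}, Theorem~\ref{chevwarn}); you run it in the reverse direction, but it is the same argument. Your computation down to $[\widetilde{F}_{G,v'}]_p \equiv -c_{\alpha^\ast}\pmod p$, with the homogeneity forcing $\alpha_e=(p-1)/2$ for every $e$, is correct and is exactly what the paper does.

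The gap is in your final step. The multinomial expansion of $\prod_{v\neq v'} f_v^{p-1}$ does \emph{not} give $\mathrm{GPerm}^{[p]}(G)$ directly: if you count the bijections in the permanent of $\mathbf{1}_{(p-1)\times(p-1)/2}\otimes M_G$ that induce a fixed tuple $(\beta^{(v)})$, the $n=(p-1)/2$ repeated copies of each column contribute an extra factor of $n!$ per edge, so that
\[
\mathrm{GPerm}^{[p]}(G)\;=\;n!^{\,|E|}\,c_{\alpha^\ast}.
\]
This is precisely the content of Proposition~\ref{polyextension}. The factor $n!^{|E|}=(n!^2)^{|E|/2}$, reduced modulo $p$ via Wilson's theorem (Corollary~\ref{wilsoncor} in the paper), is what actually generates the $|E|\bmod 4$ dichotomy. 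Your sentence attributing the dichotomy to ``matching that convention against the factor $-(-1)^{|E|}$'' cannot be right as stated, since $-(-1)^{|E|}=-1$ is a fixed sign independent of $|E|\bmod 4$. You have correctly spotted that the parity of $|E|/2$ is the relevant one, but it enters through the exponent in $(n!^2)^{|E|/2}$, not through the Chevalley--Warning step. Fill in the factor $n!^{|E|}$ explicitly and apply Wilson to it, and the proof is complete.
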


A number of extended graph permanent sequences also appear to relate to modular forms, as the $p^\text{th}$ Fourier coefficient modulo $p$. This is discussed in greater detail in Section \ref{modformscoeffs}. It is interesting to note that in all observed instances, the loop number of the graph is equal to the weight of the modular form, and the level of the modular form is a power of two.

\section{The Extended Graph Permanent}\label{Extended}

In the following subsection we introduce the matrix permanent and some notational conventions that we will follow, and further establish some properties that are necessary to create our invariant.  Much of this was previously introduced and discussed in \cite{crump}. The subsection that follows introduces the extended graph permanent. 

\subsection{Properties of the matrix permanent} \label{propsoftheperm}

 For notational convenience we will use the \emph{Kronecker product} to construct block matrices. For matrices $A=(a_{i,j})$ and $B$, $$A \otimes B = \left[ \begin{array}{ccc} a_{1,1}B & a_{1,2} B & \cdots \\ a_{2,1} B & a_{2,2} B & \cdots \\ \vdots && \end{array} \right].$$ We will denote the $n \times m$ matrix with all entries $t$ by $\mathbf{t}_{n \times m}$, or simply $\mathbf{t}_n$ if it is an $n \times n$ square. We denote the $n \times n$ identity matrix as $I_n$, or $I$ if the dimension is clear from context.
 
\begin{definition} Let $M$ be a matrix. We define the \emph{fundamental block matrix of $M$}, $\overline{M}$, to be the smallest square matrix that can be created using blocks of $M$. That is, the fundamental matrix is the smallest square matrix of the form $\mathbf{1}_{m \times n} \otimes M$ for positive integers $m$ and $n$. \end{definition}

For a graph $G$, we let $M_G^* = [m_{i,j}]$ be a signed incidence matrix of $G$, where rows are indexed by vertices and columns are indexed by edges; $$ m_{v,e} =  \begin{cases} 1, & \text{ if } h(e)=v  \\ -1, & \text{ if } t(e)=v \\ 0, & \text{ otherwise}  \end{cases} ,$$ where $h(e)$ is the head of edge $e$ and $t(e)$ is the tail.  

\begin{definition} Let $G$ be a connected graph. Arbitrarily apply directions to the edges in $G$, and  let $M^*_G$ be a signed incidence matrix associated with this digraph. Select a vertex $v$ in $V(G)$, and delete the row indexed by $v$ in $M^*_G$. Call this new matrix $M_G$. Let $\overline{M}_G$ be the fundamental matrix of $M_G$ (or \emph{a fundamental block matrix of $G$}, dependent on the orientation and choice of special vertex). Let $L = \text{lcm}\{|V(G)|-1, |E(G)|\}$. We then regularly use \begin{align} \E &= \frac{L}{|E(G)|} \label{edef} \\ \V &= \frac{L}{|V(G)|-1} \label{vdef},  \end{align} as $\overline{M}_G = \mathbf{1}_{\V \times \E} \otimes M$. Further, we call $v$ the \emph{special vertex} in the construction of $M$. As noted prior, graphs with $|E(G)| = k(|V(G)|-1)$ for some $k \in \mathbb{N}$ are of particular interest, so we define the \emph{$k$-matrix} of a matrix $M$ to be the block matrix $\mathbf{1}_{k \times 1} \otimes M$. \end{definition}

\begin{example} Consider the complete graph $K_3$, shown below. We select the marked vertex as the special vertex and orient as indicated. This results in the fundamental matrix $\overline{M}_G$.
$$G = \raisebox{-.48\height}{\includegraphics{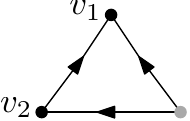}} \hspace{5mm} \overline{M}_G= \begin{array}{r} v_1\\v_2\\ \phantom{v_1} \\ \phantom{v_1} \\ \phantom{v_1}\\ \phantom{v_1} \end{array} \left[ \begin{array}{ccc|ccc} 1 & 0 & 1 & 1 & 0 & 1  \\ -1 & 1 & 0 & -1 & 1 & 0 \\ \hline 1 & 0 & 1 & 1 & 0 & 1  \\ -1 & 1 & 0 & -1 & 1 & 0 \\ \hline 1 & 0 & 1 & 1 & 0 & 1  \\ -1 & 1 & 0 & -1 & 1 & 0  \end{array} \right]$$

Similarly, for $K_4$, we produce the following fundamental matrix.

$$G = \raisebox{-.48\height}{\includegraphics[scale=.9]{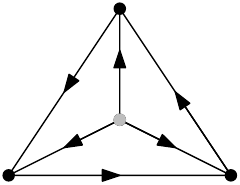}} \hspace{5mm} \overline{M}_G= \left[ \begin{array}{cccccc} 
1 & 0 & 0 & 1 & -1 & 0  \\ 
0 & 1 & 0 & -1 & 0 & 1 \\ 
0 & 0 & 1 & 0 & 1 & -1 \\ \hline
1 & 0 & 0 & 1 & -1 & 0  \\ 
0 & 1 & 0 & -1 & 0 & 1 \\ 
0 & 0 & 1 & 0 & 1 & -1  \end{array} \right]$$

\end{example}

Decompleted $4$-regular graphs (like $K_4$) will always have fundamental matrices that are a $2$-matrix, as for all graphs $G$ in this family, $|E(G)| = 2(|V(G)|-1)$.

We desire a square matrix to allow for standard matrix operations. While this construction will result in zero matrix determinants due to the duplicated rows for all non-tree graphs, the permanent is still of interest. 

\begin{definition}\label{permdef} Let $A=(a_{i,j})$ be an $n$-by-$n$ matrix. The \emph{permanent} of $A$ is $$\text{Perm}(A) = \sum_{\sigma \in S_n} \prod_{i=1}^n a_{i,\sigma(i)},$$ where the sum is over all elements of the symmetric group $S_n$. \end{definition}

This is the Leibniz formula for the permanent, which differs from the determinant in that there are no signs. Unfortunately, the permanent is not invariant under adding a multiple of a row to another row. As a result, and in an effort to regain row reduction techniques, we focus instead on a residue of the permanent.

\begin{proposition}[Corollary 6 in \cite{crump}] \label{reduction} Suppose $M = \mathbf{1}_{k \times 1} \otimes B$ is square for some matrix $B$, and $r_i$ and $r_j$ are rows of $M$ in a common block, $i \neq j$. Let $M'$ be a matrix derived from $M$ by adding a constant integer multiple of $r_j$ to $r_i$ in each block. Then $\text{Perm}(M) \equiv \text{Perm}(M') \pmod {k+1}$. \end{proposition}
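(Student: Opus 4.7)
The plan is to use multilinearity of the permanent in its rows, together with the fact that a matrix with many identical rows has a permanent divisible by a correspondingly large factorial.

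First I would fix notation. Write $B$ as an $n \times kn$ matrix so that $M$ is $kn \times kn$, and index the rows of $M$ by pairs $(\ell, a)$ where $\ell \in \{0,1,\ldots,k-1\}$ is the block and $a \in \{1,\ldots,n\}$ is the row within a block. Since each block equals $B$, row $(\ell, a)$ of $M$ equals some fixed vector $r_a$ independent of $\ell$. The hypothesis says $r_i$ and $r_j$ are two such block-rows (so $i,j \in \{1,\ldots,n\}$, $i\neq j$) and the operation produces the matrix $M'$ in which every row $(\ell, i)$ has been replaced by $r_i + c\, r_j$ for some fixed integer $c$. Note $M'$ still has the form $\mathbf{1}_{k \times 1} \otimes B'$.

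Next I would expand by multilinearity in the $k$ modified rows. For $S \subseteq \{0,\ldots,k-1\}$, let $N_S$ be the matrix obtained from $M$ by replacing row $(\ell, i)$ with $r_j$ for each $\ell \in S$ (leaving the other rows $(\ell, i)$, $\ell \notin S$, as $r_i$). Then row-multilinearity of the permanent gives
\[
\text{Perm}(M') \;=\; \sum_{S \subseteq \{0,\ldots,k-1\}} c^{|S|}\, \text{Perm}(N_S).
\]
The $S = \emptyset$ term is exactly $\text{Perm}(M)$, so it remains to show $\text{Perm}(N_S) \equiv 0 \pmod{k+1}$ whenever $|S| \ge 1$.

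For this I would invoke the standard divisibility lemma: if a matrix $N$ has $m$ identical rows, then $m!$ divides $\text{Perm}(N)$. (The proof is a one-line symmetrization of the Leibniz formula: the group $S_m$ acts freely on $S_{kn}$ by permuting the column assignments of the $m$ identical rows, and within each orbit of size $m!$ all summands contribute the same monomial, so the total is $m!$ times an integer.) Now when $|S| \ge 1$, the matrix $N_S$ contains the $k$ original copies of $r_j$ coming from the rows $(\ell, j)$, plus $|S|$ further copies in the positions $(\ell, i)$ for $\ell \in S$, so $N_S$ has at least $k+1$ identical rows. Hence $(k+1)! \mid \text{Perm}(N_S)$, and in particular $\text{Perm}(N_S) \equiv 0 \pmod{k+1}$.

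The only real substance is the divisibility-by-$m!$ lemma, and even that is a short symmetry argument; otherwise the proof is a clean multilinear expansion. The mild subtlety worth double-checking is that applying the row operation uniformly in every block is exactly what makes $M'$ decompose into the block structure $\mathbf{1}_{k\times 1}\otimes B'$, which is in turn what makes the expansion terms $N_S$ with $|S|\ge 1$ accumulate enough duplicated rows to kill the permanent modulo $k+1$.
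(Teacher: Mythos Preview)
Your proof is correct and takes the natural approach: expand by row-multilinearity of the permanent over the $k$ modified rows, then kill every nonempty-$S$ term using the factorial-divisibility lemma (which is exactly Remark~\ref{kfactorial} in the paper). The paper itself does not reprove this proposition---it merely cites it from \cite{crump}---but your argument is essentially the standard one and lines up with the tools the paper has on hand.
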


It is important to note that the fundamental block matrix does indeed have this form, as $$\mathbf{1}_{m \times n} \otimes M = \mathbf{1}_{m \times 1} \otimes \left( \mathbf{1}_{1 \times n} \otimes M  \right).$$

\begin{remark} \label{rowops} Trivially, the permanent is preserved under interchanging rows or columns, and behaves like the determinant when multiplying a row or column by a constant. With Proposition \ref{reduction}, the residue  $\text{Perm}(\mathbf{1}_{k \times 1} \otimes M) \pmod{k+1}$ is well-behaved under row operations within all blocks simultaneously given this matrix construction. \end{remark}

\begin{theorem}[Theorem 9 in \cite{crump}]\label{specialinvariant} Let $G$ be a graph and fix an orientation to the edges. Let $\overline{M} = \mathbf{1}_{n \times m} \otimes M$ be the fundamental matrix from this orientation. The permanent of this matrix is invariant under choice of special vertex modulo $n+1$. \end{theorem}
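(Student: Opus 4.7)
The plan is to use Proposition \ref{reduction} together with the column-sum identity for the signed incidence matrix to convert $\overline{M}_v$ (the fundamental matrix with special vertex $v$) into one essentially equal to $\overline{M}_w$ (with special vertex $w$), then carefully bookkeep any sign that appears.

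First I would exploit the fact that the rows of $M_G^*$ sum to zero, so in particular $\text{row}(v) = -\sum_{u \neq v} \text{row}(u)$. Starting from $\overline{M}_v = \mathbf{1}_{n \times m} \otimes M_v$, for each $u \in V(G) \setminus \{v, w\}$ I would add the row at position $u$ to the row at position $w$ in every block. By Proposition \ref{reduction} and Remark \ref{rowops}, each such simultaneous row addition preserves the permanent modulo $n+1$. After completing all $|V(G)| - 2$ of these additions, the row at position $w$ in each of the $n$ blocks becomes $\text{row}(w) + \sum_{u \neq v, w} \text{row}(u) = -\text{row}(v)$, while all other rows remain untouched.

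Call the resulting matrix $\widetilde{M}$. Relabeling position $w$ as position $v$ (a permutation of rows, which preserves the permanent), $\widetilde{M}$ agrees entry-for-entry with $\overline{M}_w$ except that the $n$ rows at position $v$ (one per block) carry an overall minus sign. Negating those $n$ rows scales the permanent by $(-1)^n$, so
$$\text{Perm}(\overline{M}_v) \equiv \text{Perm}(\widetilde{M}) \equiv (-1)^n \text{Perm}(\overline{M}_w) \pmod{n+1}.$$

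The main obstacle is to eliminate the sign $(-1)^n$ when $n$ is odd. For this I would use the Kronecker structure of $\overline{M}_w = \mathbf{1}_{n \times m} \otimes M_w$: since it contains $n$ literally identical copies of each of the $|V(G)|-1$ rows of $M_w$, the standard orbit argument (permutations that only permute identical rows among themselves contribute equally in the permanent expansion) shows that $(n!)^{|V(G)|-1}$ divides $\text{Perm}(\overline{M}_w)$. For odd $n \geq 3$ we have $(n+1)/2 \leq n$, so $(n+1)/2$ is one of the factors in $n!$ and hence divides the permanent; for $n = 1$ the congruence $-1 \equiv 1 \pmod{2}$ is automatic. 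In either case $2\,\text{Perm}(\overline{M}_w) \equiv 0 \pmod{n+1}$, so the sign $(-1)^n$ has no effect modulo $n+1$ and the claimed invariance follows.
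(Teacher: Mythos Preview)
Your argument is correct. The paper does not give a proof of this theorem here --- it is simply cited as Theorem~9 in \cite{crump} --- so there is no in-paper proof to compare against line by line. That said, your method is exactly the one suggested by the surrounding machinery: you use the zero column-sum property of the full signed incidence matrix together with Proposition~\ref{reduction} to transform $\overline{M}_v$ into $\overline{M}_w$ up to a factor of $(-1)^n$, and then dispose of that sign.

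The only step worth a remark is the handling of $(-1)^n$. Your divisibility argument (that $n!$, and hence $(n+1)/2$, divides the permanent when $n\ge 3$ is odd) is valid and self-contained. An alternative, slightly quicker route available from the paper itself is to note that odd $n$ forces $n+1$ to be even, so either $n+1=2$ (where $-1\equiv 1$) or $n+1$ is composite, in which case Theorem~\ref{poof} already gives $\text{Perm}(\overline{M}_w)\equiv 0\pmod{n+1}$ for $|V(G)|>2$. Your version has the mild advantage of not needing the hypothesis $|V(G)|>2$.
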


Theorem \ref{specialinvariant} immediately gives us the option to change which vertex is the special vertex in a calculation. This will be an important tool later.

\begin{remark}\label{kfactorial} (Lemma 4 in \cite{crump}) If a matrix has $k$ identical rows or columns, then $k!$ divides the permanent. \end{remark}

\begin{theorem}[Proposition 13 in \cite{crump}]\label{poof} For non-prime $k + 1$, the permanent of any fundamental matrix $\mathbf{1}_{k \times n} \otimes M$ associated to a graph $G$ with $|V(G)| > 2$ is zero modulo $k + 1$.   \end{theorem}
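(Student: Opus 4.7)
My plan is to strengthen Remark \ref{kfactorial} by observing that the fundamental matrix actually contains many disjoint groups of identical rows, not just one. The matrix $\overline{M} = \mathbf{1}_{k \times n} \otimes M$ repeats every row of $M$ exactly $k$ times, so among its $k(|V(G)|-1)$ rows there are $|V(G)|-1$ disjoint groups of $k$ mutually identical rows. Within each such group, precomposing the column assignment by any element of $S_k$ leaves every summand in the Leibniz expansion of $\text{Perm}(\overline{M})$ unchanged, and these actions commute across distinct groups. Hence $(k!)^{|V(G)|-1}$ divides $\text{Perm}(\overline{M})$.

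Once this is in hand, the theorem reduces to a pure divisibility check. The hypothesis $|V(G)|>2$ yields $|V(G)|-1 \geq 2$, so in fact $(k!)^2 \mid \text{Perm}(\overline{M})$. If $k+1$ is composite and at least $6$, the familiar fact that $n \mid (n-1)!$ for composite $n>4$ gives $k+1 \mid k! \mid (k!)^2$ immediately. The only remaining case is $k+1=4$, where $k! = 6$ is not a multiple of $4$ but $(k!)^2 = 36 = 4 \cdot 9$ is, so the extra factor supplied by a second group of identical rows is exactly what is needed to close the argument.

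The one step that wants care is the group-action bookkeeping in the first paragraph: one must verify that partitioning the summands of the Leibniz permanent into orbits of the product group $S_k^{|V(G)|-1}$, acting by relabelling rows within each group, really does produce free orbits of size $(k!)^{|V(G)|-1}$, each contributing a common value. Granting that, the rest is a one-line arithmetic case analysis, and the hypothesis $|V(G)|>2$ is invoked precisely to upgrade the single factor of $k!$ in Remark \ref{kfactorial} to $(k!)^2$, which is exactly what is needed to handle the lone composite value $k+1=4$ that is not already handled by $k+1 \mid k!$.
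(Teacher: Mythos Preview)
Your proposal is correct and follows essentially the same approach as the paper: both first observe that $(k!)^{2}$ divides the permanent because $|V(G)|-1\ge 2$ groups of $k$ identical rows each contribute a factor of $k!$, and both then check that any composite $k+1$ divides $(k!)^{2}$. The only cosmetic difference is in this last step: the paper factors $k+1=ab$ with $a,b>1$ and notes $a\mid k!$, $b\mid k!$, hence $ab\mid (k!)^{2}$, whereas you invoke the standard fact $n\mid (n-1)!$ for composite $n>4$ and handle $k+1=4$ separately.
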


\begin{proof} As $|V(G)|>2$, $k!^2$ is a factor in the permanent by Remark \ref{kfactorial}. Factoring $k+1 = ab$ where $a,b>1$, both appear in the product $k!$, and the result follows. \end{proof}

\subsection{Constructing the extended graph permanent}

From Theorem \ref{poof}, we have seen that only prime residues are of interest when computing permanents. The following classical theorem, coupled with Theorem \ref{poof}, is key to our construction of sequences based on the permanent.

\begin{dirichlet} For relatively prime $a$ and $b$, the sequence $(an+b)_{n \in \mathbb{N}}$ contains infinitely many primes. \end{dirichlet}

It follows that there are infinitely many primes of the form $an+1$ for arbitrary positive integer $a$.

\begin{definition} Let $G$ be a graph and $\overline{M} = \mathbf{1}_{\V \times \E} \otimes M$ a fundamental matrix of $G$. Define $(p_i)_{i \in \mathbb{N}} $ as the increasing sequence of all primes that can be written $p_i = n_i\V+1$ for some positive integer $n_i$. Then, matrix $\mathbf{1}_{n_i} \otimes \overline{M}$ is square and each row appears $n_i\V$ times. As such, the permanent is well-defined modulo $n_i\V+1 =p_i$. Call this residue the \emph{$p_i^\text{th}$ graph permanent}, $\text{GPerm}^{[p_i]}(G)$. Define the \emph{extended graph permanent} as the sequence $$\left( \text{GPerm}^{[p_i]}(G) \right)_{i \in \mathbb{N}}.$$ \end{definition}

Trees, which uniquely produce sequences that have values at all primes, will be discussed in Section \ref{treesnshit}. The $4$-point graphs in $\phi^4$ theory, our motivating class, produce sequences with values at all odd primes. 

The extended graph permanent relies on the arbitrary orientation of edges in a graph in the construction of the matrix. As changing the orientation is equivalent to multiplying a column of the signed incidence matrix by $-1$, there is potentially a sign ambiguity associated to this permanent. However, as the definition fixes an orientation for all copies of the edge-defined columns, this sign ambiguity occurs only over primes that require an odd number of duplications of columns, and the ambiguity affects all values of this type together. The sign ambiguity will be discussed in greater detail in Section \ref{signambiguity}.

\begin{remark} \label{connected} While the definition makes no mention of connectedness of the graph, a connected component that does not contain the special vertex will cause the permanent to vanish for all primes. This is consistent with the quantum field theory motivation.  

Interestingly, if we instead require one special vertex per connected component, the matrix again becomes full rank. It is impossible in this matrix to differentiate between this disconnected graph and a similar connected graph where the special vertex in each connected component is identified, resulting in a cut vertex. By Theorem \ref{specialinvariant}, we may therefore cleave a graph at a cut vertex, switch which vertex is special in each component, and then identify the special vertices again. \end{remark}

\section{Invariance Under Period Preserving Operations} \label{graphoperations}

We return now to our motivation, the Feynman period, and show that the extended graph permanent is invariant under all graphic operations known to preserve the period. A useful tool will be a way of interpreting the column duplication in the construction of larger matrices as an operation on the graph.

\begin{remark}\label{altrem} We may alternately define the extended graph permanent in a more structural setting. Create the \emph{$n$-duplicated graph} $G^{[n]}$ by replacing all edges in $G$ with $n$ edges in parallel. Let $M_n$ be a signed incidence matrix of $G^{[n]}$ with some choice of special vertex deleted, such that all edges in parallel are oriented in the same direction. Then, when there are values $k,n \in \mathbb{N}$ such that $\mathbf{1}_{k \times 1} \otimes M_n$ is square and $k+1 = p$ is prime, $\text{GPerm}^{[p]}(G) = \text{Perm}(\mathbf{1}_{k \times 1}\otimes M_n)$. \end{remark}

A key to understanding the permanent computation for a $k$-matrix associated to a graph $G$ with $|E(G)| = k(|V(G)|-1)$ is as follows. Each non-zero value in the permanent computation sum is determined by a selection of non-zero entries in the matrix, one entry in each row and one in each column. Call such selections a \emph{contribution}. Associate each of the $k$ blocks with a unique colour. Then, a non-zero value is selected once in each row and once in each column, corresponding to each edge being selected once, and each non-special vertex being selected $k$ times. We may associate this selection then to a colouring and tagging of the edges. Each edge receives a colour corresponding to the block that it was selected in, and a tag at the vertex that selected that edge. The value of each contribution, then, is the product of the entries selected. By construction, if $t$ is the number of edge tags that lie on the tails of arcs in the underlying orientation, the value of the contribution is $(-1)^t$.

\begin{remark}[Remark 11 in \cite{crump}]\label{casualintro} There is a bijection between these taggings and colourings and the contributions to the permanent. \end{remark}

Each tagging allows $k!^{|V(G)|-1}$ colourings, since each non-special vertex receives a tag from precisely one edge of each colour. Hence, we may work with these contributions in a structural sense by considering only the taggings.

\begin{example} Again, consider the graph $K_3$. A contribution to the permanent, and the associated edge tagging, is shown below. Again, the special vertex is labeled in gray.

 \centering     \includegraphics[scale=1.0]{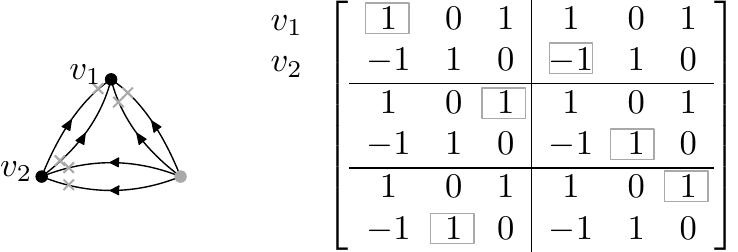}

 \end{example}





\begin{remark} \label{evendegree} Suppose $G$ is a $d$-regular graph, and we are considering a decompletion $G-v$. Then, in computing the extended graph permanent $G-v$ has $|V(G)|-2$ vertices that receive tags, and $\frac{d(|V(G)|-2)}{2}$ edges. As each of these $|V(G)|-2$ vertices must receive an equal number of tags, each receives $d/2$ tags, precisely half the number of incident edges. It follows that such a $d$-regular graph must either have $d$ even, or if $d$ is odd then the number of duplications $n$ in $G^{[n]}$ must be even when computing the extended graph permanent for defined primes. \end{remark}

\begin{theorem}\label{egpcompletion} Let $G$ be a regular graph. For any choice of $v\in V(G)$, $G - v$ has the same extended graph permanent.  \end{theorem}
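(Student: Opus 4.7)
The plan is to leverage the special-vertex invariance (Theorem \ref{specialinvariant}) to place the two extended graph permanents in a common frame. Since $G$ is regular, $G-v$ and $G-w$ have the same vertex and edge counts, so their extended graph permanents are defined at exactly the same primes. Fix such a prime $p$, and let $k, n$ be as in Remark \ref{altrem}, so that $\text{GPerm}^{[p]}(G-v)$ is the permanent of $\mathbf{1}_{k \times 1} \otimes M_n^{(v)}$, where $M_n^{(v)}$ is a signed incidence matrix of $(G-v)^{[n]}$ with some special-vertex row deleted. By Theorem \ref{specialinvariant}, I may choose the special vertex for this computation to be $w$, and symmetrically compute $\text{GPerm}^{[p]}(G-w)$ with $v$ as the special vertex. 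The resulting two matrices both have rows indexed by $V(G) \setminus \{v, w\}$ and differ only in which columns are present: those for edges of $G^{[n]}$ not incident to $v$, versus those for edges not incident to $w$.

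The heart of the argument is to show these two permanents agree modulo $p$. Via the tagging correspondence of Remark \ref{casualintro}, each permanent equals $(k!)^{|V(G)|-2}$ times a signed count of edge-taggings in which every vertex of $V(G) \setminus \{v, w\}$ receives exactly $k = dn/2$ tags (using Remark \ref{evendegree} together with the regularity of $G$). By Wilson's theorem, $(k!)^{|V(G)|-2}$ is invertible modulo $p$, so it suffices to match the two signed tagging counts. For the $G-v$ tagging, each edge incident to $w$ is forced to tag at its other endpoint (since $w$ is special and receives no tags), while for the $G-w$ tagging the analogous forced tags occur on edges incident to $v$. Regularity of $G$ ensures that the total number of such forced tags, $(d - c)n$ with $c$ the number of edges between $v$ and $w$ in $G$, is the same on both sides. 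My plan is to build a sign-preserving bijection between the two tagging families by exchanging the forced tags across the symmetric pair $(v, w)$ and reallocating the remaining free tags on the edges incident to neither $v$ nor $w$ to balance any per-vertex discrepancy.

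The main obstacle lies in this reallocation step. The per-vertex forced tag counts at $u \in V(G) \setminus \{v, w\}$ equal $n$ times the multiplicity of $uw$ (for the $G-v$ tagging) and of $uv$ (for the $G-w$ tagging), and these need not agree vertex by vertex even though their totals do. The exchange of forced tags must therefore redistribute among the free edges while preserving the global constraints, and the orientation sign on each moved tag must be tracked carefully. A possibly cleaner route, should the combinatorial bijection prove unwieldy, is to argue at the matrix level: the full signed incidence matrix of $G^{[n]}$ has vanishing row sum, so Proposition \ref{reduction} supplies a row relation that, within the block structure of the $k$-matrix, lets one trade the pair (row $v$ deleted, $v$-incident columns deleted) for (row $w$ deleted, $w$-incident columns deleted) modulo $p$. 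Verifying that the accumulated sign changes and row/column permutations cancel out is the technical content the proof must ultimately supply, and regularity of $G$ is what makes these cancellations possible.
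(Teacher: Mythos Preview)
Your setup is right: pick $w$ as the special vertex for $G-v$ and $v$ for $G-w$, pass to taggings, and try to build a bijection. But your proposal stops short of an actual bijection and correctly identifies the obstacle without overcoming it. The ``reallocation'' difficulty you describe is real for the map you are contemplating---swapping only the forced tags and then patching up the interior---and there is no clean way to repair the per-vertex discrepancies that arise.

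The missing idea is that you should not try to keep the interior tags fixed at all. Extend a tagging of $(G-v)^{[n]}$ to all of $G^{[n]}$ by declaring that $v$ receives every tag from its incident edges; then $v$ gets all its incident tags, $w$ gets none, and every other vertex gets exactly half by Remark~\ref{evendegree}. Now simply \emph{complement} the tagging: on every edge of $G^{[n]}$, move the tag to the other endpoint. Since each interior vertex had exactly half of its incident tags, it still has exactly half after complementation; and the roles of $v$ and $w$ are swapped, so $w$ now receives all its incident tags and $v$ none. Restricting to the edges not incident to $w$ gives a valid tagging for $(G-w)^{[n]}$ with special vertex $v$. This is manifestly an involution, hence a bijection. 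For the sign, reverse the orientation on every edge simultaneously; since every tag also switched ends, each contribution's sign is preserved (and a judicious initial orientation---edges at $v$ pointing in, edges at $w$ pointing out---makes this transparent). There is no matrix-level row manipulation needed, and no per-vertex bookkeeping beyond the single observation that ``half'' is complement-invariant.
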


\begin{proof} Let $v,w \in V(G)$. We prove this by showing that for any odd prime $p$, there is an orientation of the edges of $G_v = G-v$ and $G_w = G - w$ such that $\text{GPerm}^{[p]}(G_v) = \text{GPerm}^{[p]}(G_w)$. Let $w$ be the special vertex for $G_v$, and similarly let $v$ be the special vertex for $G_w$.

For a contribution to the permanent of $G_v$ for prime $p$, use the graph $(G_v)^{[n]}$ per Remark \ref{altrem}. Extend such a tagging to the graph $G^{[n]}$ by adding vertex $v$ back with all incident edges and duplications, so that vertex $v$ receives all tags from incident edges. Apply an orientation to the edges so that all edges incident to $v$ are oriented towards $v$, and all edges incident to $w$ are oriented away from $w$. The remaining edges may be oriented arbitrarily.

We bijectively move between such a tagging of $(G_v)^{[n]}$ and $(G_w)^{[n]}$ by reversing the orientation of all tags, thus reversing the roles of $v$ and $w$ as the special and decompletion vertices. All other vertices will still receive half of the tags from incident edges per Remark~\ref{evendegree}, and hence this is still a valid tagging of the graph. Further, reverse the underlying orientation of all edges. In doing so, the values of the mapping between contributions is fixed for these orientations of $G_v$ and $G_w$, and thus the extended graph permanents are equal.  \end{proof}

The Schnetz twist is another operation known to preserve the period (\cite{Sphi4}). Shown in Figure \ref{twist}, we partition the edges of a graph across a four-vertex cut, and on one side redirect edges incident to vertices of the cut. We assume that both graphs are $4$-regular. If two graphs differ by a Schnetz twist, then any pair of decompletions of the graphs are known to have equal periods.

\begin{figure}[h]
  \centering
      \includegraphics[scale=0.80]{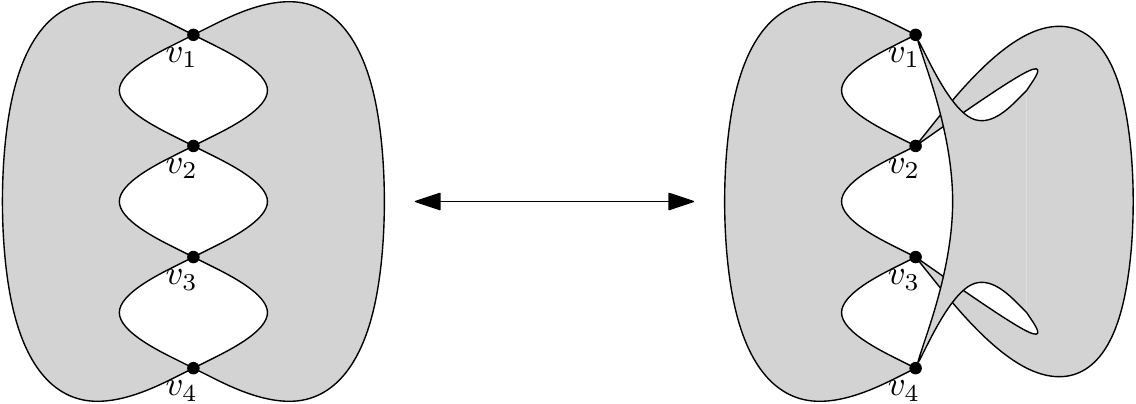}
  \caption{The Schnetz twist. If both graphs are $4$-regular, then all decompletions of these two graphs have equal Feynman periods.}
\label{twist}
\end{figure}

We extend the notion of the Schnetz twist to allow for both graphs to be $d$-regular.

\begin{proposition}\label{schnetz} Consider two $d$-regular graphs that differ by a Schnetz twist, say $G_1$ and $G_2$. Decompletions of these graphs have equal extended graph permanents. \end{proposition}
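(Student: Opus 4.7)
The strategy mirrors Theorem~\ref{egpcompletion}: for each odd prime $p = n\mathcal V + 1$ at which the extended graph permanent is defined, establish a sign-preserving bijection between permanent contributions of two decompletions, using the tagging interpretation of Remarks~\ref{altrem} and~\ref{casualintro}. Write $C = \{a_1, a_2, a_3, a_4\}$ for the four-vertex cut, let $H_L, H_R$ be the two sides of the cut glued along $C$, and let $\pi \in S_4$ encode the cut-vertex permutation realizing the Schnetz twist (so in $G_2$, each $H_R$-edge incident to $a_i$ in $G_1$ is instead incident to $a_{\pi(i)}$). The $d$-regularity of both graphs forces $\pi$ to preserve the $H_R$-degrees at the cut vertices.

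Using Theorem~\ref{egpcompletion}, I decomplete the same vertex $v \in V(H_L)\setminus C$ in both graphs, reducing the claim to $\text{GPerm}^{[p]}(G_1 - v) = \text{GPerm}^{[p]}(G_2 - v)$; the left side of the resulting graphs agrees. Using Theorem~\ref{specialinvariant}, I place the special vertex $w$ in $V(H_L)\setminus(C\cup\{v\})$, keeping all four cut-vertex rows in the matrix. Orient every $H_R$-edge identically in the two graphs, and orient each cut edge uniformly toward (or away from) its cut endpoint, so the $\pm 1$ entry selected by any tag depends only on which end of the edge is tagged, not on which of the two graphs is involved.

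For a tagging $T_1$ of $(G_1 - v)^{[n]}$, the candidate $T_2$ of $(G_2 - v)^{[n]}$ retains every $H_L$-tag and relocates each $H_R$-tag at $a_i$ to $a_{\pi(i)}$. By the orientation choice above this map is sign-preserving; however, it is not a priori a valid tagging, since the cut-vertex tag count at $a_i$ in $T_2$ becomes $\ell_i + r_{\pi^{-1}(i)}$, where $\ell_i$ and $r_i$ denote the $H_L$- and $H_R$-tag counts at $a_i$ in $T_1$, and this need not equal $2n$ unless $(r_i)$ is already appropriately $\pi$-symmetric with respect to $(\ell_i)$.

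The main obstacle is resolving this validity issue. I would stratify contributions by their cut interface $(\ell_1, \ldots, \ell_4)$ and exploit two mechanisms: the row-duplication structure of $\mathbf{1}_n \otimes \overline{M}_{G_i}$, in which each cut-vertex row appears $2n$ times, and the freedom to add arbitrary integer multiples of one row to another within each block modulo $p$ afforded by Proposition~\ref{reduction}. Together these should allow, within each interface class, an aggregated rebalancing of $H_R$-tags across cut vertices that yields a valid $T_2$ without altering the total contribution modulo $p$. A structurally parallel argument was carried out for the (non-extended) graph permanent in~\cite{crump}, and the same mechanism should lift to the duplicated setting here.
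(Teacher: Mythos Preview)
Your bijection does not close, and the repair you sketch is not the mechanism that actually works. By placing both the special vertex and the decompletion vertex \emph{away} from the cut, you force all four cut vertices to carry exactly the target number of tags, so your relocation-by-$\pi$ map is valid only on the strata where $(\ell_i)$ is already $\pi$-invariant. Proposition~\ref{reduction} is a statement about row operations on the matrix and does not translate into a device for ``rebalancing $H_R$-tags across cut vertices'' inside a fixed tagging; invoking it here is a category error, and the stratification you describe still leaves you needing an unexplained identity between $\sum_\ell L(\ell)\,R(2n-\ell_1,\dots,2n-\ell_4)$ and the $\pi$-permuted version. The reference to \cite{crump} does not help, because the argument there is the same as the paper's here, and it is not the one you outlined.

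The missing idea is to put the special and decompletion vertices \emph{on the cut itself}, specifically at the pair $\{v_3,v_4\}$ that the Schnetz twist exchanges. With $v_3$ special (zero tags) and $v_4$ decompleted (all incident tags), only $v_1$ and $v_2$ remain to be balanced, and a parity count on the left forces that if $v_1$ receives $t$ left-tags then $v_2$ receives $d_1-t$. The bijection is then not relocation-by-$\pi$ but \emph{reversing every tag on the right side}: each right edge hands its tag to the opposite endpoint. This is an involution, it keeps every interior right vertex at half-degree, it sends the right-tag counts at $(v_1,v_2,v_3,v_4)$ from $(\tfrac{dn}{2}-t,\ \tfrac{dn}{2}-d_1+t,\ 0,\ d_3)$ to $(\tfrac{dn}{2}-d_1+t,\ \tfrac{dn}{2}-t,\ d_3,\ 0)$, and after applying the twist (which swaps $v_1\leftrightarrow v_2$ and $v_3\leftrightarrow v_4$ on the right) one lands on a valid tagging of $G_2^{[n]}$ with the correct totals everywhere. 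Choosing the orientation on $G_2$ to be that of $G_1$ with all right-side edges reversed makes the map sign-preserving. No row operations, no stratification, no residual identity to prove.
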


\begin{proof} Label the vertices in the four-vertex cut as in Figure \ref{twist}.  By Theorem \ref{specialinvariant} and Theorem \ref{egpcompletion}  we may chose $v_3$ as the special vertex and $v_4$ as the decompletion vertex for both graphs. For prime $p$, we again extend the contributions to $G_1^{[n]}$ and $G_2^{[n]}$ by saying that the decompletion vertex receives all tags. These are both $dn$-regular graphs.

Fix a contribution to the permanent in $G_1^{[n]}$. Then, the decompletion vertex receives $dn$ tags, the special vertex receives none, and all others get $\frac{dn}{2}$ tags. Since we assume $G_1^{[n]}$ and $G_2^{[n]}$ are both $dn$-regular for even $dn$ by Remark~\ref{evendegree}, suppose vertex $v_1$ is incident to $d_1$ edges on the left, and $v_3$ is incident to $d_3$ on the left. Then, vertices $v_2$ and $v_4$ must be incident to $d_1$ and $d_3$ edges on the left, respectively. If there are $v$ vertices properly contained on the left, then there are $$\frac{1}{2}(dnv+2d_1+2d_3) = \frac{dnv}{2}+d_1+d_3$$ edges, and hence total tags, on the left. Each of the $v$ vertices properly on this side receive $\frac{dn}{2}$ tags, while $v_3$ receives none and $v_4$ receives $d_3$. Thus, if $v_1$ receives $t$ tags on the left, then $v_2$ gets $$ (\frac{dn}{2}v+d_1+d_3)-(\frac{dn}{2}v+d_3+t) =d_1-t.$$ 

By construction, $v_1$ must receive $\frac{dn}{2}-t$ tags on the right, while $v_2$ receives $\frac{dn}{2}-d_1+t$. Consider reversing the direction of all tags on the right side in this contribution. Then, $v_1$ receives $(\frac{dn}{2}-d_1)-(\frac{dn}{2}-t)=\frac{dn}{2}-d_1+t$ tags on the right, while $v_2$ receives $(\frac{dn}{2}-d_1)-(\frac{dn}{2}-d_1+t)=\frac{dn}{2}-t$. Further, $v_3$ receives all the tags on the right, and $v_4$ receives none. Exchanging edges on the right via the Schnetz twist, this then becomes a contribution to the permanent in $G_2^{[n]}$. Clearly, this is a bijection. Fixing an orientation in $G_1$ arbitrarily, and an orientation in $G_2$ by reversing the direction of all edges in the right side of the graph, we see that the values of all contributions are preserved. Hence, the permanents of these graphs are equal at this prime, and so the extended graph permanents must be equal. \end{proof}

Lastly, planar duals are known to have equal periods (\cite{Sphi4}). We now show that they also have equal extended graph permanents. The following results will be of use throughout the proof of the invariance of duality.

\begin{wilson} A natural number $n$ is prime if and only if $(n - 1)! \equiv −1 \pmod{n}$. \end{wilson}

\begin{proposition} For positive integers $s$ and $t$ such that $s>t$; 
\begin{align} \gcd (s,s-t) &= \gcd (s,t) \label{gcdstuff} \\
\frac{\lcm (s,t)}{t} &= \frac{\lcm (s,s-t)}{s-t} \label{lcmstuff} \\
  \frac{\lcm (s,t)}{t}&=\frac{\lcm (s,t)}{s} + \frac{\lcm (s,s-t)}{s}\label{anotherntthing}
  \end{align}  \end{proposition}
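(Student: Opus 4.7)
The three identities are elementary number-theoretic facts, and the plan is to reduce all of them to the classical identity $\lcm(a,b)\gcd(a,b)=ab$ together with the Euclidean-algorithm fact encoded in (\ref{gcdstuff}).

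First I would dispatch (\ref{gcdstuff}) directly from the definition of the greatest common divisor: if $d\mid s$ and $d\mid(s-t)$ then $d\mid s-(s-t)=t$, so every common divisor of $\{s,s-t\}$ is a common divisor of $\{s,t\}$, and conversely if $d\mid s$ and $d\mid t$ then $d\mid s-t$. The sets of common divisors coincide, so their maxima do too. This is the only substantive step; the rest is algebra.

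Next I would prove (\ref{lcmstuff}) by rewriting both sides with $\lcm(a,b)=ab/\gcd(a,b)$. The left-hand side becomes $s/\gcd(s,t)$ and the right-hand side becomes $s/\gcd(s,s-t)$, which are equal by (\ref{gcdstuff}).

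Finally, for (\ref{anotherntthing}) I would apply the same rewrite to each term. The right-hand side is
\[
\frac{t}{\gcd(s,t)} \;+\; \frac{s-t}{\gcd(s,s-t)} \;=\; \frac{t}{\gcd(s,t)} + \frac{s-t}{\gcd(s,t)} \;=\; \frac{s}{\gcd(s,t)},
\]
where the middle equality uses (\ref{gcdstuff}). This last expression equals $\lcm(s,t)/t$, which is the left-hand side; alternatively one can quote (\ref{lcmstuff}) to identify it with $\lcm(s,s-t)/(s-t)\cdot (s-t)/(s-t)$, but the direct computation is cleaner. There is no real obstacle here: once (\ref{gcdstuff}) is in hand, (\ref{lcmstuff}) and (\ref{anotherntthing}) are two–line manipulations, and the only thing to be careful about is that $s>t$ guarantees $s-t>0$ so that all denominators are nonzero and $\lcm(s,s-t)$ is defined.
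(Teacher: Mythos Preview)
Your proof is correct. The paper actually states this proposition without proof, treating all three identities as elementary; your argument---reducing everything to $\lcm(a,b)\gcd(a,b)=ab$ together with the Euclidean step (\ref{gcdstuff})---is exactly the kind of routine verification the paper omits, and there is nothing further to compare.
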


We may now prove that a graph and its dual have extended graph permanents defined on the same set of primes.

\begin{corollary} \label{primesaregood} Suppose $G=(V,E)$ is a connected planar graph such that $|V|>1$ and $|E| > |V|-1$. Let  $G^*=(V^*,E^*)$ be the planar dual of $G$. Then $\frac{\lcm(|V|-1,|E|)}{|V|-1} = \frac{\lcm(|V^*|-1,|E^*|)}{|V^*|-1}$ and the extended graph permanents for $G$ and $G^*$ are defined on the same set of primes. \end{corollary}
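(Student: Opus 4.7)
The plan is to reduce everything to the identity \eqref{lcmstuff} from the preceding proposition, using Euler's formula to relate the parameters of $G$ and $G^*$.

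First I would record the standard facts about planar duality: $|E^*| = |E|$, and by Euler's formula $|V|-|E|+|F| = 2$ (where $F$ denotes the face set of $G$), together with $|V^*| = |F|$. Rearranging gives $|V^*|-1 = |E|-(|V|-1)$. Setting $s = |E|$ and $t = |V|-1$, the hypothesis $|E| > |V|-1$ guarantees $s > t > 0$, so equation \eqref{lcmstuff} applies and yields
$$\frac{\lcm(|V|-1, |E|)}{|V|-1} = \frac{\lcm(s,t)}{t} = \frac{\lcm(s, s-t)}{s-t} = \frac{\lcm(|V^*|-1, |E^*|)}{|V^*|-1}.$$
This establishes the first claim.

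For the second claim, I would unwind the definition of the extended graph permanent: with $L_G = \lcm(|V|-1,|E|)$ and $\V_G = L_G/(|V|-1)$, the permanent is defined at exactly those primes $p$ of the form $p = n\V_G + 1$, i.e., the primes $p \equiv 1 \pmod{\V_G}$. The analogous statement holds for $G^*$ with $\V_{G^*}$. Since we have just shown $\V_G = \V_{G^*}$, the congruence condition $p \equiv 1 \pmod{\V_G}$ defining the indexing set of primes is literally the same for both graphs, and by Dirichlet's theorem this set is infinite. Hence the extended graph permanents for $G$ and $G^*$ are defined on exactly the same set of primes.

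There is really no serious obstacle here: the corollary is a packaging result, and the only mild thing to watch is that the dual graph is well-defined and connected (so $|V^*|-1$ is a sensible quantity). The hypothesis $|V|>1$ and connectedness of $G$ ensure $G^*$ is connected and has at least one face, while $|E|>|V|-1$ ensures $G$ has a cycle, which forces $|V^*|>1$ so that $s-t > 0$ and the invocation of \eqref{lcmstuff} is legitimate.
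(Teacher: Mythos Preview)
Your proof is correct and follows essentially the same route as the paper: use Euler's formula to compute $|V^*|-1 = |E|-(|V|-1)$, then apply identity \eqref{lcmstuff} with $s=|E|$, $t=|V|-1$ to conclude $\V_G = \V_{G^*}$, whence the indexing primes coincide. Your version is slightly more careful about checking that the hypotheses of \eqref{lcmstuff} are met, but the argument is the same.
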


\begin{proof} By construction, $G^*$ has $|E|$ edges, and by Euler's polyhedral formula $2+|E|-|V|$ vertices. As the fundamental matrix $M_G$ has dimensions $\left( |V|-1 \right) \times |E|$, the extended graph permanent of $G$ is defined over primes of the form $\frac{\lcm (|V|-1, |E|)}{|V|-1}n+1$. The fundamental matrix $M_{G^*}$ is of dimension $(1+|E|-|V|) \times |E|$, and hence the extended graph permanent of $G^*$ is defined over primes of the form $\frac{\lcm (1+ |E| -|V| , |E|)}{1 + |E| -|V|}n+1$. By Equation~\ref{lcmstuff}, $ \frac{\lcm (|V|-1, |E|)}{|V|-1} = \frac{\lcm (1+ |E| -|V| , |E|)}{1 + |E| -|V|}$,  which completes the proof. \end{proof}

The following is Theorem 2.2.8 in \cite{Oxl}, translated in to graph theoretic language. 

\begin{theorem}\label{oxleydual} Let $G$ be a connected planar graph with $n$ edges, such that $G$ is neither a tree nor the empty graph. Order the edges of $G$ so that the first $r= |V(G)|-1$ form a spanning tree. Then, the reduced signed incidence matrix row reduces to $[I_r |A]$. Maintaining this ordering on the edges, the dual $G^*$ has reduced signed incidence matrix that can be row reduced to $[-A^T|I_{n-r}]$. \end{theorem}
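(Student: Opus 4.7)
My plan is to split the argument into a linear-algebraic part giving the $[I_r \mid A]$ form, and a planar-duality part giving the $[-A^T \mid I_{n-r}]$ form. For the first part, after ordering the edges so that the first $r = |V(G)|-1$ form the spanning tree $T$, let $N$ be the $r \times r$ submatrix of $M_G$ on the first $r$ columns. This $N$ is exactly the reduced signed incidence matrix of $T$ with the same deleted vertex, and it is invertible: by peeling off leaves of $T$ (or ordering the remaining vertices by distance from the special vertex), $N$ is upper triangular up to row permutation with $\pm 1$ on the diagonal. Left-multiplying $M_G$ by $N^{-1}$ then yields $[I_r \mid A]$ for some $r \times (n-r)$ matrix $A$.

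For the dual part, I would invoke the classical fact that when $T$ is a spanning tree of a connected planar graph $G$, the dual edges $E(G) \setminus T$ form a spanning tree $T^*$ of $G^*$ (an elementary face-count / cycle-count argument). Combined with the dimension identity from the proof of Corollary \ref{primesaregood} giving $|V(G^*)|-1 = n - r$, the same invertible-tree-submatrix argument shows $M_{G^*}$ row reduces to $[B \mid I_{n-r}]$ for some $(n-r) \times r$ matrix $B$. To identify $B$, I would use that the row space of $M_G$ is the cocycle (cut) space of $G$, so its orthogonal complement in $\mathbb{Q}^E$ is the cycle space of $G$. The rows of $[-A^T \mid I_{n-r}]$ annihilate those of $[I_r \mid A]$ since $[I_r \mid A] \cdot [-A^T \mid I_{n-r}]^T = -A + A = 0$, and they are linearly independent, so they span this cycle space. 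Under the standard convention of orienting each dual edge $e^*$ so that it crosses $e$ from the face on $e$'s right to the face on $e$'s left, the cycle space of $G$ coincides as a subspace of $\mathbb{Q}^E$ with the cocycle space of $G^*$, i.e., with the row space of $M_{G^*}$. Hence $[B \mid I_{n-r}] = [-A^T \mid I_{n-r}]$, forcing $B = -A^T$.

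The main obstacle is the sign bookkeeping in the last step. A different convention for orienting dual edges would produce $+A^T$ instead of $-A^T$, and the minus sign here is exactly what one gets from the "rotate by $90^\circ$" convention above. I would verify this on a small explicit example (for instance, a theta graph and its dual, or $K_4$ with its self-dual structure) to pin down that the sign matches the convention implicit in our definition of $M_{G^*}$, and then simply record that the rest of the argument — the invertibility of the tree submatrix, and the algebraic identity showing the rows of $[-A^T \mid I_{n-r}]$ annihilate those of $[I_r \mid A]$ — is a direct computation.
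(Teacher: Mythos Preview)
The paper does not prove this theorem; it simply cites it as Theorem~2.2.8 of Oxley's \emph{Matroid Theory}, translated from the matroid setting into graph language. Your argument is essentially the standard proof one finds there: the row space of $[I_r \mid A]$ and the row space of $[-A^T \mid I_{n-r}]$ are orthogonal complements in $\mathbb{Q}^n$, and one identifies these with the cocycle space of $G$ and the cycle space of $G$ (equivalently, the cocycle space of $G^*$) respectively. Oxley works abstractly with a representable matroid and its dual, whereas you have added the graph-theoretic layer (spanning tree of $G$ gives spanning cotree of $G^*$, cycle space of $G$ equals cut space of $G^*$); this is exactly the ``translation'' the paper alludes to but does not spell out.

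Your worry about the sign is the right thing to flag, and your resolution is adequate. The theorem as stated says the dual's reduced incidence matrix \emph{can be} row reduced to $[-A^T \mid I_{n-r}]$, which already permits a choice of orientation on $G^*$; with the ``rotate by $90^\circ$'' convention you describe, the minus sign falls out. One small point: rather than pointing to Corollary~\ref{primesaregood} for $|V(G^*)|-1 = n-r$, just cite Euler's formula directly, since that is all that corollary uses and it keeps the logical dependencies clean.
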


This is key to proving that the extended graph permanent is indeed invariant under planar duals for $4$-point $\phi^4$ graphs. We require, then, that this row reduction does not change the permanents modulo the appropriate prime. We need to know then that we never must scale by a number other than $\pm 1$. To do this, we will use \emph{totally unimodular matrices}. These are matrices such that every square submatrix has determinant in $\{0,\pm1\}$. 

\begin{lemma}\label{tummy} A signed incidence matrix is totally unimodular.  \end{lemma}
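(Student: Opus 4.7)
The plan is to prove this by induction on the size $k$ of an arbitrary square submatrix $B$ of the signed incidence matrix $M^*_G$. The base case $k=1$ is immediate, since every entry of $M^*_G$ lies in $\{0,\pm 1\}$. For the inductive step, I would analyze $B$ by counting the number of nonzero entries in each of its columns, using the structural fact that every column of $M^*_G$ has exactly one $+1$, exactly one $-1$, and all other entries $0$.

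The inductive step splits into three cases based on the columns of $B$. First, if some column of $B$ is entirely zero, then $\det(B)=0$ and we are done. Second, if some column of $B$ contains exactly one nonzero entry (necessarily $\pm 1$, since it was inherited from a column of $M^*_G$), I would use cofactor expansion along that column; the resulting $(k-1)\times(k-1)$ minor is again a submatrix of $M^*_G$, so by the inductive hypothesis its determinant lies in $\{0,\pm 1\}$, and multiplying by $\pm 1$ keeps us in the desired set. Third, if every column of $B$ contains two nonzero entries (one $+1$ and one $-1$, again inherited from $M^*_G$), then summing all rows of $B$ yields the zero row, so the rows are linearly dependent and $\det(B)=0$.

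These three cases are exhaustive, since any column of $B$ that is nonzero and is not fully occupied by two opposite entries must have exactly one nonzero entry: columns of $M^*_G$ have at most two nonzero entries total. The only mild subtlety is confirming that the minors produced in the expansion are genuine submatrices of $M^*_G$ so the inductive hypothesis applies, but this is immediate from the fact that deleting a row and column of a submatrix gives another submatrix. I do not anticipate any real obstacle here; the argument is a standard application of the two-nonzero-entries-per-column structure of signed incidence matrices, and the cleanest way to present it is exactly this three-case induction.
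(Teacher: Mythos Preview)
Your argument is correct and is the classical inductive proof that the signed incidence matrix of a directed graph is totally unimodular: the three-case split (a zero column, a column with a single $\pm 1$, or every column containing one $+1$ and one $-1$) is exhaustive and each case is handled properly. The only minor remark is that the statement as phrased should be read for loopless graphs, so that every column of $M^*_G$ indeed has exactly one $+1$ and one $-1$; you are implicitly using this, and it is harmless in the paper's setting.

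As for comparison with the paper: there is nothing to compare. The paper does not prove this lemma at all but simply cites Oxley's textbook with the line ``A proof of this can be found in \cite{Oxl}.'' Your write-up is precisely the standard argument one finds in such references, so you have supplied what the paper omits.
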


\noindent A proof of this can be found in \cite{Oxl}.

Let $A = [a_{x,y}]$ be a matrix. Following Oxley, define \emph{pivoting} on entry $a_{s,t}$ as the series of operations used in standard Gaussian elimination to turn the $t^\text{th}$ column into the $s^\text{th}$ unit vector. 

\begin{proposition}[\cite{Oxl}] Let $A$ be a totally unimodular matrix. If $B$ is obtained from $A$ by pivoting on the non-zero entry $a_{s,t}$ of $A$, then $B$ is totally unimodular.  \end{proposition}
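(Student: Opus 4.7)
The plan is to fix an arbitrary $k \times k$ submatrix $B[I,J]$ (rows indexed by $I$, columns by $J$) and show $\det(B[I,J]) \in \{0, \pm 1\}$ directly. The key observation is that pivoting on $a_{s,t}$ is a composition of row operations (adding multiples of row $s$ to other rows) followed by a single scaling of row $s$ by $1/a_{s,t}$. Since $A$ is totally unimodular and $a_{s,t} \neq 0$, we have $a_{s,t} = \pm 1$, so every such scaling multiplies a determinant by $\pm 1$ only.

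I would split into three cases according to the position of the pivot row and column relative to $I$ and $J$. First, if $s \in I$, then the pivot operations act legitimately on $A[I,J]$: one subtracts $(a_{i,t}/a_{s,t})$ times row $s$ from row $i$ for each $i \in I \setminus \{s\}$ (these are additive row operations internal to the submatrix, whether or not $t \in J$) and then scales row $s$ by $1/a_{s,t}$, yielding $\det(B[I,J]) = (1/a_{s,t}) \det(A[I,J]) = \pm \det(A[I,J])$. Second, if $s \notin I$ but $t \in J$, then column $t$ of $B[I,J]$ is identically zero by construction of the pivot, so $\det(B[I,J]) = 0$.

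The remaining case $s \notin I$, $t \notin J$ is the main obstacle, since the expression $b_{ij} = a_{ij} - (a_{i,t}/a_{s,t}) a_{s,j}$ references row $s$ of $A$, which lies outside $A[I,J]$, so there is no direct row-operation interpretation within the submatrix. To circumvent this I would enlarge to the $(k+1) \times (k+1)$ submatrix $B[I \cup \{s\}, J \cup \{t\}]$, which now falls under the first case and hence has determinant $(1/a_{s,t}) \det(A[I \cup \{s\}, J \cup \{t\}])$. On the other hand, column $t$ of this enlarged matrix is zero off of row $s$ and equals $1/a_{s,t}$ at row $s$, so cofactor expansion along that column rewrites the same determinant as $\pm (1/a_{s,t}) \det(B[I,J])$. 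Equating the two expressions gives $\det(B[I,J]) = \pm \det(A[I \cup \{s\}, J \cup \{t\}])$, which lies in $\{0, \pm 1\}$ by the total unimodularity of $A$. Since every square subdeterminant of $B$ is therefore in $\{0, \pm 1\}$, $B$ is totally unimodular.
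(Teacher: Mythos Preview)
The paper does not prove this proposition; it merely cites Oxley's \emph{Matroid Theory} as the source. Your argument is correct and is essentially the standard proof: reduce to submatrices containing the pivot row by enlarging with row $s$ and column $t$ when necessary, then exploit that column $t$ of $B$ is a unit vector to relate $\det(B[I,J])$ to a subdeterminant of $A$ via cofactor expansion.

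One trivial slip: after pivoting, the $(s,t)$ entry of $B$ is $1$, not $1/a_{s,t}$, since by definition column $t$ becomes the $s^{\text{th}}$ unit vector. This does not affect your argument, as $a_{s,t} = \pm 1$ and you only use that the entry is a unit, so the conclusion $\det(B[I,J]) = \pm \det(A[I \cup \{s\}, J \cup \{t\}]) \in \{0,\pm 1\}$ stands unchanged.
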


The following corollary is therefore immediate.

\begin{corollary}\label{constmultrowred} Row reducing as in Theorem~\ref{oxleydual}, we may choose a sequence of operations such that multiplication of a row by a constant only ever uses constant $- 1$. \end{corollary}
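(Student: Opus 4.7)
The proof should follow almost immediately by chaining together Lemma \ref{tummy} with the preceding proposition (the Oxley fact that pivoting preserves total unimodularity). My plan is as follows.

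First, I would observe that the reduced signed incidence matrix that we begin with is totally unimodular by Lemma \ref{tummy} (deleting the row corresponding to the special vertex is just taking a submatrix, which preserves total unimodularity). Row reducing to $[I_r \mid A]$ as in Theorem \ref{oxleydual} amounts to performing a sequence of pivots, one for each column of the identity block: having chosen a spanning tree, we pivot on a nonzero entry in each of its associated $r$ columns, using one entry from each row, to produce the unit vectors that form $I_r$. By the cited proposition, total unimodularity is preserved after each pivot, so at every intermediate stage of the reduction the matrix has all entries in $\{0, \pm 1\}$.

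Next I would unpack what a single pivot on entry $a_{s,t}$ actually does: it multiplies row $s$ by $1/a_{s,t}$ to normalize the pivot, and then subtracts $a_{i,t}$ times the normalized row $s$ from each other row $i$. Because the ambient matrix is totally unimodular at every step, the pivot entry $a_{s,t}$ is a nonzero element of $\{0, \pm 1\}$, hence equals $\pm 1$. The normalization step is therefore either trivial (when $a_{s,t} = 1$) or multiplication of row $s$ by $-1$ (when $a_{s,t} = -1$), and the multipliers $a_{i,t}$ used in the elimination step are also in $\{0, \pm 1\}$, so the other row operations are additions or subtractions of an unscaled row — which are not ``multiplication of a row by a constant'' in the sense of the corollary.

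Combining these observations gives exactly the claim: a valid sequence of row reductions producing $[I_r \mid A]$ consists only of row interchanges, additions/subtractions of a row from another, and at worst multiplications of a row by $-1$, as required. There is no genuine obstacle here; the only thing to be slightly careful about is making explicit that the total unimodularity invariant is maintained \emph{between} pivots (so that the next pivot entry is again $\pm 1$), and that the row-addition multipliers are read off from the current matrix rather than the original, which is why invoking the preceding proposition (and not merely Lemma \ref{tummy}) is essential.
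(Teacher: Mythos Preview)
Your proposal is correct and is precisely the argument the paper has in mind: the corollary is stated immediately after the proposition that pivoting preserves total unimodularity, with the paper simply declaring it ``immediate.'' Your unpacking of the pivot step (pivot entry $\pm 1$, elimination multipliers in $\{0,\pm 1\}$) is exactly the intended reasoning.
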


To work with these matrices, we will use cofactor expansion techniques. An important one is included in the following remark.

\begin{remark}\label{howitgoes} Let $M=[I_r|A]$ be a matrix and $\overline{M}= \mathbf{1}_{\V \times \E} \otimes M$ the fundamental matrix of $M$. Suppose we want to find the permanent of $$\mathbf{1}_n \otimes \overline{M} = \mathbf{1}_n \otimes (\mathbf{1}_{\V \times \E} \otimes M) = \mathbf{1}_{n\V \times n\E} \otimes M.$$

Note that there are $n\E$ copies of each column of $M$ in $\overline{M}$, and similarly $n\V$ copies of each row. Performing cofactor expansion along all copies of a column in the identity matrix blocks then produces a factor of $$n\V (n\V-1) \cdots (n\V -n\E +1) = \frac{(n\V)!}{(n\V - n\E)!}.$$ Each expansion removes one specific row of $A$, so in total $n\E$ copies of this row are removed. Over all columns in the identity matrix blocks, then, we get $$\text{Perm}(\mathbf{1}_n \otimes \overline{M}) = \left( \frac{(n\V)!}{(n\V - n\E)!} \right)^r \text{Perm}(\mathbf{1}_{(n\V -n\E) \times n\E} \otimes A).$$
\end{remark}

\begin{proposition}\label{dual} Suppose the graph $G$ is a connected, planar, and not a tree. Let $\V_G$ be the number of copies of each row in the fundamental matrix of $G$, and $\E_G$ the number of copies of each column. Suppose the reduced signed incidence matrix for $G$ is $M_G$, and row reduces to $[I_{|V(G)|-1}|A]$. For prime $n\V_G +1$, \begin{align*} \text{Perm}(\mathbf{1}_n \otimes \overline{M}_G) & \\ &\hspace{-2.5cm} \equiv  \left( \frac{(n\V_G)!}{(n\V_G - n\E_G)!} \right)^{|V(G)|-1} \text{Perm} (\mathbf{1}_{(n\V_G -n\E_G) \times n\E_G} \otimes A) \pmod{n\V_G +1}.  \end{align*}  \end{proposition}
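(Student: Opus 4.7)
The plan is to replace $M_G$ by its row-reduced form $[I_r|A]$ inside the block matrix (where $r = |V(G)|-1$) and then invoke Remark~\ref{howitgoes} directly. Starting from
$$\mathbf{1}_n \otimes \overline{M}_G = \mathbf{1}_{n\V_G \times n\E_G} \otimes M_G = \mathbf{1}_{n\V_G \times 1} \otimes \bigl(\mathbf{1}_{1 \times n\E_G} \otimes M_G\bigr),$$
we are in the setting of Proposition~\ref{reduction} with $k = n\V_G$, so any integer row addition inside $M_G$, applied uniformly across the row-blocks, will preserve the permanent modulo $p = n\V_G + 1$.

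Next I would use Theorem~\ref{oxleydual} to carry out the reduction. Permuting columns so that a spanning tree of $G$ occupies the first $r$ positions is just a permutation of the columns of the whole block matrix and so preserves the permanent exactly. Corollary~\ref{constmultrowred} then lets us arrange the remaining reduction to use only row additions (already accounted for) and scalings of single rows by $-1$. Such a scaling, applied in each of the $n\V_G$ row-blocks in order to maintain the block structure required by Proposition~\ref{reduction}, scales $n\V_G$ rows of the full matrix by $-1$ and therefore multiplies the permanent by $(-1)^{n\V_G}$ via Remark~\ref{rowops}.

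The one delicate point, and the main obstacle in the argument, is the sign $(-1)^{n\V_G}$: we need each such factor to equal $1$ modulo $p$. Since $G$ is connected and not a tree, $|E(G)| \geq |V(G)|$, so $\V_G \geq 2$, forcing $p = n\V_G + 1 \geq 3$; hence $p$ is odd, $n\V_G = p-1$ is even, and each scaling by $-1$ is harmless modulo $p$. Putting the steps together,
$$\text{Perm}\bigl(\mathbf{1}_n \otimes \overline{M}_G\bigr) \equiv \text{Perm}\bigl(\mathbf{1}_{n\V_G \times n\E_G} \otimes [I_r|A]\bigr) \pmod{p},$$
and applying Remark~\ref{howitgoes} with $M$ taken to be $[I_r|A]$ yields the claimed expression.
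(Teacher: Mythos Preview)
Your proof is correct and follows essentially the same route as the paper: reduce $M_G$ to $[I_r\mid A]$ inside the block structure using Proposition~\ref{reduction}, Corollary~\ref{constmultrowred}, and Theorem~\ref{oxleydual}, then apply Remark~\ref{howitgoes}. You have in fact made the sign argument more explicit than the paper does, correctly observing that $G$ being a non-tree forces $\V_G\ge 2$ and hence $n\V_G=p-1$ even, which is exactly what the paper means by ``restricting to non-trees forces all primes to be odd, and hence an even number of repeated rows.''
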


\begin{proof}
By Remark~\ref{rowops}, Corollary~\ref{reduction}, and Corollary~\ref{constmultrowred}, row reduction operations preserve the extended graph permanent modulo $n\V_G +1$, as restricting to non-trees forces all primes to be odd, and hence an even number of repeated rows for all matrices. We may therefore row reduce $M_G$, the signed incidence matrix of $G$, to $ \left[ \begin{array}{c|c} I_{|V(G)|-1} & A \end{array} \right]$ by Theorem~\ref{oxleydual}. 

For prime $n\V_G+1$, by Remark~\ref{howitgoes}, this gives the desired residue. \end{proof}

Proposition~\ref{dual} translates to dual graphs quickly. For graph $G$ that meets the requirements and dual $G^*$, $|V(G^*)| = 2 - |V(G)| + |E(G)|$, and $|E(G^*)| = |E(G)|$. Therefore, \begin{align*} \text{Perm}(\mathbf{1}_n \otimes \overline{M}_{G^*}) &  \equiv \left( \frac{(n\V_{G^*})!}{(n \V_{G^*} - n\E_{G^*})!} \right)^{|V(G^*)|-1} \\ & \hspace{0.8cm} \cdot \text{Perm}(\mathbf{1}_{(n\V_{G^*} -n\E_{G^*}) \times n\E_{G^*}} \otimes -A^T) \pmod{n\V_G +1},\end{align*} using the reduction from Theorem~\ref{oxleydual}. By Corollary~\ref{primesaregood}, $\V_G = \V_{G^*}$, but $\E_G$ is not necessarily equal to $\E_{G^*}$.

\begin{corollary}[to Equation~\ref{lcmstuff}] \label{brokenlcms} For positive integers $s$ and $t$ such that $s>t$, $\lcm (s,t) = \lcm (s-t,s)$ if and only if $2t = s$. \end{corollary}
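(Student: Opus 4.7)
The plan is to leverage the identity $\gcd(s, s-t) = \gcd(s,t)$ from Equation~\ref{gcdstuff}, which reduces the statement to a one-line algebraic manipulation. Specifically, I would use the classical identity $\lcm(a,b) \cdot \gcd(a,b) = ab$ to rewrite both sides:
\[
\lcm(s,t) = \frac{st}{\gcd(s,t)}, \qquad \lcm(s, s-t) = \frac{s(s-t)}{\gcd(s, s-t)} = \frac{s(s-t)}{\gcd(s,t)},
\]
where the second equality uses Equation~\ref{gcdstuff}. Setting these equal and cancelling the common nonzero factor $s/\gcd(s,t)$ gives $t = s - t$, i.e., $s = 2t$. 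This handles the forward direction.

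For the converse, if $s = 2t$ then $s - t = t$ and the equality $\lcm(s,t) = \lcm(s, s-t)$ is trivial. So both directions are immediate once Equation~\ref{gcdstuff} is invoked.

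There is no real obstacle here: the work has already been done in the preceding proposition, and the corollary is essentially a direct algebraic consequence. The only thing worth flagging in the write-up is that the cancellation step is justified because $s > t > 0$ ensures $s/\gcd(s,t) \neq 0$, and that the ``if and only if'' is genuinely symmetric once one observes that $t = s - t$ is the unique way for the numerators $st$ and $s(s-t)$ to agree given the shared denominator.
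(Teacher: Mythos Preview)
Your argument is correct. The paper's proof is slightly different in that it invokes Equation~\ref{lcmstuff} rather than Equation~\ref{gcdstuff}: from $\frac{\lcm(s,t)}{t} = \frac{\lcm(s,s-t)}{s-t}$, equality of the numerators forces equality of the denominators, so $t = s-t$. Your route instead passes through the identity $\lcm(a,b)\gcd(a,b)=ab$ together with Equation~\ref{gcdstuff} to reach the same conclusion $t = s-t$. Both arguments are one-liners; the only distinction is that the paper explicitly bills this as a corollary to Equation~\ref{lcmstuff}, so using that identity directly is the intended shortcut, whereas you have effectively rederived it via the gcd side.
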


\begin{proof} \sloppy If $2t = s$, then $s-t = t$ and $\lcm (s-t,s) = \lcm (t,s)$. In the other direction, suppose $\lcm (s-t,s) = \lcm (t,s)$. By Equation~\ref{lcmstuff}, $\frac{\lcm (s-t,s)}{s-t} = \frac{\lcm (t,s)}{t}$, so $s-t = t$ and hence $2t=s$.   \end{proof}

As $\E_G = \frac{\lcm (|V(G)|-1, |E(G)|)}{|E(G)|}$, Corollary~\ref{brokenlcms} shows that $\E_G = \E_{G^*}$ if and only if $2(|V(G)|-1) = |E(G)|$. It follows that duality for $4$-point $\phi^4$ graphs is a special instance of general duality.

Let $G=(V,E)$ be a graph and $G^*=(V^*,E^*)$ its planar dual. Let $\overline{M}_G = \mathbf{1}_{\V_G \times \E_G} \otimes M_G$  and $\overline{M}_{G^*} = \mathbf{1}_{\V_{G^*} \times \E_{G^*}} \otimes M_{G^*}$ be respective fundamental matrices of these graphs. It follows from Equation~\ref{anotherntthing} and Proposition~\ref{primesaregood} that $\E_G + \E_{G^*} = \V_G = \V_{G^*}$.

\begin{lemma} \label{anotherwilsoncor} For $j=a+b+1$ where $a$ and $b$ are positive integers, $$a!\cdot b! \equiv (-1)^b(j-1)! \pmod{j}.$$  \end{lemma}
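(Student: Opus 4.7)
The plan is to reduce this to a direct congruence calculation exploiting the identity $j - 1 = a + b$, which rewrites $(j-1)!$ as $(a+b)!$ and splits it as $a! \cdot (a+1)(a+2)\cdots(a+b)$. The target statement is then equivalent to showing
\[
(a+1)(a+2)\cdots(a+b) \equiv (-1)^b b! \pmod{a+b+1}.
\]
The left-hand side is a product of $b$ consecutive integers, which is exactly what is needed to mesh with $j = a+b+1$ on the right.

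The key step is to reduce each factor individually modulo $j$. For $1 \le k \le b$, we have
\[
a + k \;=\; (a+b+1) - (b + 1 - k) \;\equiv\; -(b + 1 - k) \pmod{j}.
\]
As $k$ runs from $1$ to $b$, the values $b+1-k$ run through $b, b-1, \dots, 1$, so taking the product gives
\[
(a+1)(a+2)\cdots(a+b) \;\equiv\; \prod_{k=1}^{b} \bigl(-(b+1-k)\bigr) \;=\; (-1)^b \, b! \pmod{j}.
\]
Multiplying both sides by $a!$ and using $(j-1)! = a! \cdot (a+1)\cdots(a+b)$ yields $(j-1)! \equiv (-1)^b a! \, b! \pmod{j}$, and then multiplying by $(-1)^b$ (which is its own inverse) delivers the claimed congruence $a! \cdot b! \equiv (-1)^b (j-1)! \pmod{j}$.

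There is no real obstacle here; the entire argument is a one-line substitution once the correct regrouping $(j-1)! = a! \cdot (a+1)\cdots(a+b)$ is made. It is worth noting that this lemma is a mild generalization of Wilson's theorem, which it recovers (up to the $\pm 1$ sign) when $j$ is prime, so it is natural that the proof is purely arithmetic and does not require any hypothesis on $j$ beyond it being a positive integer of the stated form.
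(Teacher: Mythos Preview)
Your proof is correct and is essentially the same argument as the paper's: both hinge on the observation that, modulo $j=a+b+1$, the factors $a+1,\dots,a+b$ are the negatives of $b,\dots,1$, so $(a+1)\cdots(a+b)\equiv(-1)^b b!$. The paper simply runs the computation in the opposite direction (starting from $a!\,b!$ and rewriting each factor $k$ of $b!$ as $k-j$), but the substance is identical.
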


\begin{proof} Briefly, \begin{align*} a! \cdot b! &\equiv a! (b (b-1) \cdots 1) \\ & \equiv a! ( (b-j)(b-1-j) \cdots (1-j)) \\ &\equiv (1 \cdots a)((-1)^b(j-b)(j-b+1) \cdots  (j-1)) \\ & \equiv (-1)^b (j-1)! \pmod{j} \end{align*} as $j-b = a+1$.  \end{proof}

Using the previous notation, it follows from Lemma~\ref{anotherwilsoncor} that $$(n\E_G)!\cdot (n\E_{G^*})! \equiv (-1)^{n\E_G}(n\V_G)! \pmod{n\V_G+1}.$$ While we will generally be assuming that $n\V_G+1$ is prime and hence further simplification follows from Wilson's Theorem, we will be using this to simplify future calculations, and hence leave this computation here.

It is also worth briefly noting that for a graph $G$ and fundamental matrix $\overline{M}_G = \mathbf{1}_{\V_G \times \E_G} \otimes M_G$, the product $n\E_G|E(G)|$ is always even, where $n$ is an integer such that $p=n\V_G+1$ is an odd prime. If we suppose that $\E_G$ and $|E(G)|$ are both odd, then as $\E_G = \frac{\lcm(|E(G)|,|V(G)|-1)}{|E(G)|}$, it follows that $\lcm(|E(G)|,|V(G)|-1)$ is also odd. Thus, $\V_G = \frac{\lcm(|E(G)|,|V(G)|-1)}{|V(G)|-1}$ must be odd also. As $p$ is assumed to be an odd prime, $n$ must therefore be even.

\begin{proposition}\label{painful} Let $G=(V,E)$ be a graph and $G^*=(V^*,E^*)$ its planar dual, and suppose they have fundamental matrices $\overline{M}_G = \mathbf{1}_{\V_G \times \E_G} \otimes M_G$ and $\overline{M}_{G^*} = \mathbf{1}_{\V_{G^*} \times \E_{G^*}} \otimes M_{G^*}$. For common prime $p = n\V_G +1$, $$ \text{GPerm}^{[p]}(G) = (-1)^{|E|-|V|+1}(n\E_G)!^{|E|}\text{GPerm}^{[p]}(G^*)  .$$ \end{proposition}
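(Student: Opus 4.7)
The plan is to apply Proposition~\ref{dual} to both $G$ and $G^*$, reduce the comparison to a common inner permanent, and then clear the factorials with Wilson's Theorem and Lemma~\ref{anotherwilsoncor}. Using Theorem~\ref{oxleydual}, row-reduce $M_G$ to $[I_{|V|-1} \mid A]$; the same theorem (together with a column permutation, which leaves the permanent unchanged) reduces $M_{G^*}$ to $[I_{|E|-|V|+1} \mid -A^T]$. Combining $\V_G = \V_{G^*}$ with $\E_G + \E_{G^*} = \V_G$ gives $n\V_G - n\E_G = n\E_{G^*}$ and $n\V_G - n\E_{G^*} = n\E_G$, so Proposition~\ref{dual} yields
\begin{align*}
\text{GPerm}^{[p]}(G) &\equiv \left(\tfrac{(n\V_G)!}{(n\E_{G^*})!}\right)^{|V|-1}\text{Perm}\bigl(\mathbf{1}_{n\E_{G^*}\times n\E_G}\otimes A\bigr), \\
\text{GPerm}^{[p]}(G^*) &\equiv \left(\tfrac{(n\V_G)!}{(n\E_{G})!}\right)^{|E|-|V|+1}\text{Perm}\bigl(\mathbf{1}_{n\E_{G}\times n\E_{G^*}}\otimes(-A^T)\bigr)
\end{align*}
modulo $p = n\V_G+1$.

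The two inner permanents are related by standard matrix identities: the permanent is invariant under transposition (which swaps the block shape), and negating every entry of an $N\times N$ matrix scales the permanent by $(-1)^N$. Hence
$$\text{Perm}\bigl(\mathbf{1}_{n\E_G\times n\E_{G^*}}\otimes(-A^T)\bigr) = (-1)^N\,\text{Perm}\bigl(\mathbf{1}_{n\E_{G^*}\times n\E_G}\otimes A\bigr),$$
where $N = n\E_{G^*}(|V|-1) = n\E_G(|E|-|V|+1)$. Taking the ratio of the two displayed congruences cancels the common permanent and leaves
$$\frac{\text{GPerm}^{[p]}(G)}{\text{GPerm}^{[p]}(G^*)} \equiv (-1)^N\,(n\V_G)!^{\,2|V|-|E|-2}\,\frac{(n\E_G)!^{|E|-|V|+1}}{(n\E_{G^*})!^{|V|-1}} \pmod{p}.$$

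Now apply Wilson's Theorem: $(n\V_G)! \equiv -1 \pmod p$, so the $(n\V_G)!$-power contributes $(-1)^{2|V|-|E|-2} = (-1)^{|E|}$. Since $n\E_G + n\E_{G^*} + 1 = p$, Lemma~\ref{anotherwilsoncor} gives $(n\E_G)!(n\E_{G^*})! \equiv (-1)^{n\E_{G^*}+1} \pmod p$; raising this to the $(|V|-1)$ power converts $(n\E_{G^*})!^{|V|-1}$ into $(-1)^{(n\E_{G^*}+1)(|V|-1)}/(n\E_G)!^{|V|-1}$. Substituting, the factorial exponents $|E|-|V|+1$ and $|V|-1$ for $(n\E_G)!$ combine to exactly $|E|$, producing the required $(n\E_G)!^{|E|}$ factor.

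The main obstacle is then the sign parity bookkeeping. After the substitutions the aggregate sign is $(-1)^{N + |E| + (n\E_{G^*}+1)(|V|-1)}$, and one must check that this is congruent to $(-1)^{|E|-|V|+1}$ modulo $2$. Reducing, this amounts to verifying $N \equiv n\E_{G^*}(|V|-1) \pmod 2$, which is immediate as the two quantities are equal by definition. This completes the proof.
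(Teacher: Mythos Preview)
Your proof is correct and follows essentially the same route as the paper's: both apply Proposition~\ref{dual} to $G$ and to $G^*$ (via Theorem~\ref{oxleydual}), identify the common inner permanent $\text{Perm}(\mathbf{1}_{n\E_{G^*}\times n\E_G}\otimes A)$ using transposition invariance and the sign from negating all entries, and then collapse the factorials with Wilson's Theorem and Lemma~\ref{anotherwilsoncor}. The only difference is cosmetic ordering---the paper simplifies each of $\text{GPerm}^{[p]}(G)$ and $\text{GPerm}^{[p]}(G^*)$ separately before comparing, whereas you form the ratio first and simplify afterward---and your sign bookkeeping checks out.
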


\begin{proof} Computing the extended graph permanent of $G$ at prime $p$ and modulo $p$, \begin{align*} \text{GPerm}^{[p]}(G) &\equiv \text{Perm}(\mathbf{1}_n \otimes \overline{M}_G)  \\ 
&\equiv \left( \frac{(n\V_G)!}{(n\V_G - n\E_G)!} \right)^{|V|-1} \text{Perm}(\mathbf{1}_{(n\V_G - n\E_G) \times n\E_G} \otimes A)  \\ 
&\equiv \left( \frac{(n\V_G)! (n\E_G)!}{(n\E_{G^*})!(n\E_G)!} \right)^{|V|-1}\text{Perm}(\mathbf{1}_{(n\V_G - n\E_G) \times n\E_G} \otimes A)  \\
&\equiv (-1)^{n\E_G(|V|-1)}(n\E_G)!^{|V|-1}\text{Perm}(\mathbf{1}_{(n\V_G - n\E_G) \times n\E_G} \otimes A),  \end{align*} by Proposition~\ref{dual}, Equation~\ref{anotherntthing}, and Lemma~\ref{anotherwilsoncor}.  Similarly for $G^*$, and as $\V_G = \V_{G^*}$ by Corollary~\ref{primesaregood}, \begin{align*} \text{GPerm}(G^*) & \equiv \text{Perm}(\mathbf{1}_n \otimes \overline{M}_{G^*})  \\
&\hspace{-1.5cm} \equiv \left( \frac{ (n\V_G)!}{(n\V_{G^*} - n\E_{G^*})!} \right)^{|E|-|V|+1} \text{Perm} (\mathbf{1}_{(n\V_{G^*} - n \E_{G^*}) \times n \E_{G^*}} \otimes -A^T)  \\
&\hspace{-1.5cm}  \equiv \left( \frac{-1}{(n\E_G)!} \right)^{|E|-|V|+1} (-1)^{(|E|-|V|+1)n\E_G}\text{Perm}(\mathbf{1}_{n\E_G \times (n\V_G - n\E_{G})} \otimes A^T)   \\ 
&\hspace{-1.5cm}  \equiv \frac{(-1)^{(|E|-|V|+1)(n\E_G+1)}}{(n\E_G)!^{|E|-|V|+1}} \text{Perm} (\mathbf{1}_{(n\V_G - n\E_G) \times n \E_G} \otimes A)  \pmod{n\V_G+1}. \end{align*}  

Note the common factors in these two equivalences. Therefore, \begin{align*} \text{GPerm}^{[p]}(G) 
&\equiv  (-1)^{n\E_G(|V|-1)}(n\E_G)!^{|V|-1} (n\E_G)!^{|E|-|V|+1} \\ & \hspace{1cm} \cdot (-1)^{(|E|-|V|+1)(n\E_G+1)}\text{GPerm}^{[p]}(G^*) \\
&\equiv (-1)^{|E|-|V|+1}(n\E_G)!^{|E|}\text{GPerm}^{[p]}(G^*) \pmod{p}.\end{align*}
\end{proof}

To show specifically that this results in invariance for $4$-point $\phi^4$ graphs, we will use the following corollary to both Wilson's Theorem and Lemma~\ref{anotherwilsoncor}.

\begin{corollary}\label{wilsoncor} Let $p= 2n+1$ be an odd prime. Then, $$n!^2 \equiv \begin{cases} -1\pmod{p} \text{ if $n$ is even} \\ 1 \pmod{p} \text{ if $n$ is odd} \end{cases}.$$ \end{corollary}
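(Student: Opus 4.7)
The plan is to derive Corollary~\ref{wilsoncor} as a one-line consequence of the two results it names: Lemma~\ref{anotherwilsoncor} and Wilson's Theorem. Since $p = 2n+1$ is odd, we have $p = n + n + 1$, so we are exactly in the setting of Lemma~\ref{anotherwilsoncor} with $a = b = n$ and $j = p$.

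Concretely, I would first apply Lemma~\ref{anotherwilsoncor} with $a=b=n$ to obtain
$$n! \cdot n! \equiv (-1)^n (p-1)! \pmod{p}.$$
Then I would invoke Wilson's Theorem, which gives $(p-1)! \equiv -1 \pmod{p}$, yielding
$$n!^2 \equiv (-1)^{n+1} \pmod{p}.$$
Finally, I would split into cases on the parity of $n$: when $n$ is even, $(-1)^{n+1} = -1$; when $n$ is odd, $(-1)^{n+1} = 1$. This matches the two cases in the statement.

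There is essentially no obstacle here — the result is purely mechanical once the two cited theorems are in hand, and the only thing worth checking is that the hypothesis $a,b \geq 1$ of Lemma~\ref{anotherwilsoncor} holds, which is true whenever $p \geq 3$, i.e.\ for every odd prime. The one small stylistic choice is whether to package the conclusion as $n!^2 \equiv (-1)^{n+1} \pmod{p}$ and then observe the case split, or to present the two cases directly; I would favor the former for brevity.
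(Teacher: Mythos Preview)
Your proposal is correct and matches the paper's intended argument exactly: the paper presents this as a corollary to Wilson's Theorem and Lemma~\ref{anotherwilsoncor} without writing out the proof, and your derivation via $a=b=n$, $j=p$ is precisely the implied one-line deduction. There is nothing to add.
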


\begin{corollary}\label{phi4dual} For planar graph $G=(V,E)$ where $2(|V(G)|-1) = |E(G)|$ and its planar dual $G^*$, $G$ and $G^*$ have equal extended graph permanents. \end{corollary}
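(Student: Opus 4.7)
The plan is to specialize Proposition \ref{painful} to $4$-point $\phi^4$ graphs and then simplify the resulting prefactor modulo $p$ using Corollary \ref{wilsoncor}. The hypothesis $|E| = 2(|V|-1)$ forces $\lcm(|V|-1,|E|) = |E|$, so $\V_G = 2$ and $\E_G = 1$. The common primes of definition are therefore all odd primes $p = 2n+1$, and for such a prime we have $n\E_G = n$ together with $|E|-|V|+1 = |V|-1$. Substituting into Proposition \ref{painful} gives
$$\text{GPerm}^{[p]}(G) \;\equiv\; (-1)^{|V|-1} (n!)^{2(|V|-1)} \,\text{GPerm}^{[p]}(G^*) \;\equiv\; \bigl(-(n!)^2\bigr)^{|V|-1} \,\text{GPerm}^{[p]}(G^*) \pmod{p}.$$

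Next I would invoke Corollary \ref{wilsoncor}: $(n!)^2 \equiv -1 \pmod{p}$ when $n$ is even, and $(n!)^2 \equiv 1 \pmod{p}$ when $n$ is odd. Equivalently, $-(n!)^2 \equiv 1 \pmod{p}$ in the first case and $-(n!)^2 \equiv -1 \pmod{p}$ in the second. Thus when $n$ is even the prefactor collapses to $1^{|V|-1} = 1$ and the equality $\text{GPerm}^{[p]}(G) \equiv \text{GPerm}^{[p]}(G^*) \pmod{p}$ is immediate; when $n$ is odd the prefactor reduces to $(-1)^{|V|-1}$, which again equals $1$ whenever $|V|$ is odd, yielding equality at those primes too.

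The one residual case, and the main obstacle, is $n$ odd combined with $|V|$ even, where the computation appears to introduce a spurious global sign $-1$ at every prime $p \equiv 3 \pmod{4}$. Crucially, this is precisely the regime of the sign ambiguity flagged in Section \ref{Extended}: a prime is affected by the orientation-dependent sign flip exactly when $n\E_G = n$ is odd, and reversing the orientation of a single edge multiplies each contribution by $(-1)^{n\E_G} = -1$ uniformly at every such prime. Consequently, after fixing orientations consistently on $G$ and $G^*$ for the primes with $n$ even (where the identity holds on the nose), one absorbs the $(-1)^{|V|-1}$ discrepancy for the odd-$n$ primes by toggling the orientation of a single edge of $G^*$ when $|V|$ is even. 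Because the toggle acts by a uniform sign on exactly the primes where the discrepancy appears, the two extended graph permanent sequences agree. I would conclude the proof by writing out this orientation choice explicitly and noting that the remaining primes are handled directly by the mod-$p$ identity above.
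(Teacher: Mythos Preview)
Your proof is correct and follows essentially the same route as the paper: specialize Proposition~\ref{painful} using $\E_G=1$ and $|E|-|V|+1=|V|-1$, reduce the prefactor via Corollary~\ref{wilsoncor} into the $n$ even and $n$ odd cases, and then absorb the residual $(-1)^{|V|-1}$ at the odd-$n$ primes into the orientation-dependent sign ambiguity by reversing a single edge. The paper's version is slightly terser (it does not split further on the parity of $|V|$), but the argument is the same.
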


\begin{proof} Here we consider primes of the form $p=2n+1$ for integers $n$. By Proposition~\ref{painful}, it suffices to consider only $(-1)^{|E| -|V|+1}(n\E_G)!^{|E|} \pmod{p}$. As $|E|=2(|V|-1)$ and $\E_G =1$, this is equivalent to $(-1)^{|V|-1}n!^{2(|V|-1)}$. 

If $n$ is even, then by Corollary~\ref{wilsoncor} $n!^2 \equiv -1 \pmod{p}$, and $$(-1)^{|V|-1}n!^{2(|V|-1)} \equiv (-1)^{2(|V|-1)} \equiv 1 \pmod{p} .$$ Otherwise, $n!^2 \equiv 1 \pmod{p}$, and $$ (-1)^{|V|-1}n!^{2(|V|-1)} \equiv (-1)^{|V|-1} \pmod{p} .$$ As these are primes for which the extended graph permanent vales may vary based on the underlying orientation of the directed graph, this produces either equivalence or a constant sign difference that can be corrected by reversing the direction of one edge. \end{proof}

While not period preserving itself, decompleted graphs with $2$-vertex cuts also have an important property with regards to the period. Breaking the graph as in Figure \ref{2cut} and assuming all are $4$-point $\phi^4$, the period of $G$ is equal to the products of the periods of $G_1$ and $G_2$. As such, we would like for the extended graph permanent to have this property also. Before we can prove that it does, we require the following useful result.

\begin{figure}[h]
  \centering
      \includegraphics[scale=1.20]{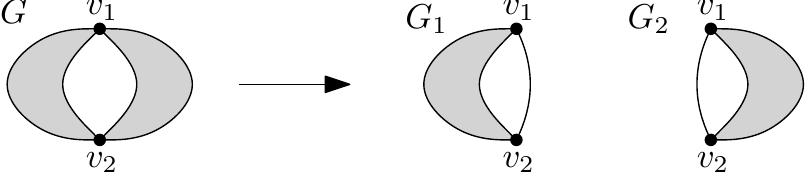}
  \caption{Operation on a $2$-vertex cut. If all are $4$-point graphs in $\phi^4$ theory, then the period of $G$ is equal to the product of the periods of $G_1$ and $G_2$.}
\label{2cut}
\end{figure}

\begin{lemma}[Lemma 21 in \cite{crump}]\label{pigeon} Suppose $M = \left[ \begin{array}{cc} A & \bf{0} \\ \bf{0} & B \end{array} \right]$ is a square block matrix, where $A$ and $B$ are arbitrary and $\bf{0}$ is all-zero. If $A$ is not square then the permanent of $M$ is zero.  \end{lemma}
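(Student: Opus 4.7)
The plan is a standard pigeonhole argument on the support of any contributing permutation. Write $A$ as an $a \times b$ matrix and $B$ as a $c \times d$ matrix, so that $M$ has dimensions $(a+c) \times (b+d)$. Since $M$ is square, $a + c = b + d$, and the hypothesis that $A$ is not square means $a \neq b$.

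Using the Leibniz formula (Definition \ref{permdef}), I would inspect which permutations $\sigma \in S_{a+c}$ can contribute a nonzero product $\prod_i m_{i,\sigma(i)}$. Because of the block structure, a row $i \leq a$ has nonzero entries only among columns $1, \ldots, b$, while a row $i > a$ has nonzero entries only among columns $b+1, \ldots, b+d$. Thus any $\sigma$ with nonzero contribution must satisfy $\sigma(\{1,\ldots,a\}) \subseteq \{1,\ldots,b\}$ and $\sigma(\{a+1,\ldots,a+c\}) \subseteq \{b+1,\ldots,b+d\}$.

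Now I would apply the pigeonhole principle. If $a > b$, then $\sigma$ would have to inject $a$ elements into a set of size $b < a$, which is impossible. If $a < b$, then $a + c = b + d$ forces $c > d$, so $\sigma$ cannot inject $\{a+1, \ldots, a+c\}$ into $\{b+1, \ldots, b+d\}$. Either way, no permutation contributes, so every term in the Leibniz sum vanishes and $\text{Perm}(M) = 0$.

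There is no real obstacle here; the only thing to be careful about is bookkeeping the dimensions so the pigeonhole conclusion is applied on the correct side of the inequality $a \neq b$. The argument is identical to the corresponding determinant fact, since the sign in the Leibniz expansion played no role.
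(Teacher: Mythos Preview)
Your argument is correct. The paper does not give its own proof of this lemma; it simply cites it as Lemma~21 in \cite{crump}, and the label \texttt{pigeon} there already signals that the intended argument is exactly the pigeonhole reasoning you wrote down. There is nothing to add.
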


\begin{theorem} \label{2vertexcut} Consider the graph $G$ and two minors $G_1$ and $G_2$ seen in Figure~\ref{2cut}. If for all $G' \in \{G,G_1,G_2\}$, $2|V(G')| -2= |E(G')|$, then for all odd primes $p$  $\text{GPerm}^{[p]}(G) = - \text{GPerm}^{[p]}(G_1) \text{GPerm}^{[p]}(G_2).$ \end{theorem}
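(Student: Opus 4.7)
The plan is to compute all three permanents using the tagging interpretation from Remark~\ref{casualintro} applied to the duplicated graphs of Remark~\ref{altrem}. Fix an odd prime $p = 2n+1$ and work with $G^{[n]}$, $(G_1)^{[n]}$, and $(G_2)^{[n]}$. By Theorem~\ref{specialinvariant}, make the common cut vertex $v$ the special vertex in all three graphs. Each tagging of $H^{[n]}$ contributes $(2n)!^{|V(H)|-1}(-1)^t$ to the permanent, and Wilson's Theorem gives $(2n)! \equiv -1 \pmod{p}$, so
$$\text{GPerm}^{[p]}(H) \equiv (-1)^{|V(H)|-1} S(H) \pmod{p},$$
where $S(H) = \sum (-1)^t$ runs over taggings of $H^{[n]}$. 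Hence it suffices to establish $S(G) = S(G_1)S(G_2)$ and then track how the three Wilson signs combine.

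The core combinatorial step is a sign-preserving bijection between taggings of $G^{[n]}$ and pairs of taggings of $(G_1)^{[n]}$ and $(G_2)^{[n]}$. Let $\{u,v\}$ be the $2$-cut and partition $E(G) = E_1 \cup E_2$ along it. In any tagging of $G^{[n]}$, every vertex in $V(G_i) \setminus \{u,v\}$ receives all $2n$ of its tags from $E_i$-edges. Counting tags contributed by $E_1$-edges gives $n|E_1| = 2n(|V(G_1)|-2) + t_L$, where $t_L$ is the number of tags $u$ receives from the left; substituting $|E_1| = 2|V(G_1)|-3$ yields $t_L = n$, and symmetrically $t_R = n$. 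Meanwhile, in $(G_i)^{[n]}$ the $n$ copies of the added $uv$ edge are forced to tag $u$ (since $v$ is special), supplying exactly the $n$ extra tags $u$ needs to reach $2n$. Restriction to $E_i$ and extension by the forced $uv$-tags are mutually inverse.

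To make the bijection sign-preserving, orient each edge of $E_i$ identically in $G$ and $G_i$, and direct every added $uv$-copy in $G_i$ as $v \to u$ so it tags its head and contributes $0$ to the tail-count. Then $t_G = t_{G_1} + t_{G_2}$ under the bijection, giving $S(G) = S(G_1)S(G_2)$. Substituting back,
$$\text{GPerm}^{[p]}(G) \equiv (-1)^{|V(G)|+|V(G_1)|+|V(G_2)|-3}\,\text{GPerm}^{[p]}(G_1)\,\text{GPerm}^{[p]}(G_2) \pmod{p},$$
and using $|V(G)| = |V(G_1)|+|V(G_2)|-2$ the exponent equals $2(|V(G_1)|+|V(G_2)|)-5$, which is odd, delivering the required factor $-1$.

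The main obstacle will be the sign bookkeeping. The combinatorial kernel — the counts $t_L = t_R = n$ and the restriction/extension bijection — is clean, but one must carefully pin down the edge orientations so that the tail-counts are truly additive under the bijection, all while tracking both the Wilson factors $(2n)!^{|V|-1}$ and the intrinsic sign ambiguity of the extended graph permanent arising from reversed duplicated columns. The orientation choice above (directing each added $uv$-copy away from the special vertex) is what forces those extra edges to contribute trivially to the sign, so the final $-1$ comes purely from the parity of the three vertex counts.
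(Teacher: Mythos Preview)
Your proof is correct and is essentially the paper's argument recast in the tagging language of Remark~\ref{casualintro}: the paper fixes $v_2$ as special and does cofactor expansion along the $2n$ rows of $v_1$, invoking Lemma~\ref{pigeon} to force the $n$--$n$ split, which is exactly your counting argument $t_L=t_R=n$; the paper's single surviving $(2k)!$ factor plays the role of your parity computation on $|V(G)|+|V(G_1)|+|V(G_2)|-3$.
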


\begin{proof} Set $v_2$ as the special vertex for all graphs, and write $(C|D)$ as the row corresponding to vertex $v_1 \in V(G)$. Then, we have signed incidence matrices 
$$M_G = \left[ \begin{array}{ccc|ccc}
& G_1 & & & \bf{0}& \\ 
&C& && D& \\ 
 & \bf{0} & & & G_2 & \end{array} \right], $$  

 $$M_{G_1} = \left[ \begin{array}{ccc|c}
& G_1& & \bf{0} \\  
&C& & 1\end{array} \right], $$ and 

 $$M_{G_2} = \left[ \begin{array}{c|ccc}
\bf{0} && G_2 &  \\ 
1 &&D&  \end{array} \right]. $$ By extension then, the fundamental matrices are $2$-matrices, and we want to compute permanents for $\mathbf{1}_{2k \times k} \otimes M$ for $M \in \{ M_G, M_{G_1}, M_{G_2} \}$.

Computing the permanent for $G$ by cofactor expansion along $2k$ rows $(C|D)$ and using Lemma \ref{pigeon}, the remaining blocks will only be square if $k$ columns are taken from the edges in $G_1$ and $k$ from edges in $G_2$. We use notation $N_S$ to denote matrix $N$ with a set of columns $S$ removed. Further, we assume the edges are oriented so that all entries in rows $(C|D)$ are in $\{ 0,1\}$. For notational convenience take $\mathcal{C}$ as the set of non-zero columns of $\mathbf{1}_{1 \times k} \otimes (C|D)$ that are in $C$, and similarly let $\mathcal{D}$ be the set of non-zero columns of $\mathbf{1}_{1 \times k} \otimes (C|D)$ in $D$. Hence, \begin{align*} 
\text{Perm} \left( \mathbf{1}_{2k \times k} \otimes M_G \right)
&= (2k)! \sum_{\substack{i_1,...,i_k \in \mathcal{C} \\ j_1 , ..., j_k \in \mathcal{D}}} \text{Perm} \left( \mathbf{1}_{2k \times k} \otimes \left[ \begin{array}{cc} G_1&\bf{0} \\ \bf{0}&G_2 \end{array} \right]_{ \substack{ \{i_1,...,i_k, \\ \hspace{3mm}j_1,...,j_k \} }  } \right) \\ 
&= (2k)! \sum_{i_1,...,i_k \in \mathcal{C}} \text{Perm} \left( \mathbf{1}_{2k \times k} \otimes \left[ \begin{array}{c} G_1 \end{array} \right]_{\left\{i_1,...,i_k \right\}} \right) \\
&\hspace{1cm} \times \sum_{j_1 , ..., j_k \in \mathcal{D}} \text{Perm} \left( \mathbf{1}_{2k \times k} \otimes \left[ \begin{array}{c} G_2 \end{array} \right]_{\left\{ j_1,...,j_k \right\} } \right) . \end{align*} We assume in all summations that elements of $i_1,...,i_k$ and $j_1,...,j_k$ are pairwise disjoint.

Similarly, expanding along the $k$ columns corresponding to the new edges in $G_1$ and then the $k$ remaining rows corresponding to $(C)$, we get;
\begin{align*} 
\text{Perm} \left( \mathbf{1}_{2k \times k} \otimes M_{G_1} \right) 
&= \frac{(2k)!}{k!} \text{Perm} \left( \mathbf{1}_k \otimes \left[ \begin{array}{c} G_1 \\ G_1 \\ C  \\ \end{array} \right] \right) \\ 
&= \frac{(2k)!}{k!} k! \sum_{ i_1,...,  i_k \in \mathcal{C}} \left( \mathbf{1}_k \otimes  \text{Perm} \left[ \begin{array}{c} G_1 \\ G_1 \end{array} \right]_{ \{i_1,...,i_k\}} \right).  \end{align*} 
Similarly, $$\text{Perm}(\mathbf{1}_{2k \times k} \otimes M_{G_2})= (2k)! \sum_{j_1,...,j_k \in \mathcal{D}} \text{Perm} \left( \mathbf{1}_k \otimes \left[ \begin{array}{c} G_2 \\ G_2 \end{array} \right]_{\{ j_1,...,j_k\} } \right).$$ As $(2k)! \equiv -1 \pmod{2k+1}$  by Wilson's Theorem, the extended graph permanents differ by a constant sign.
  \end{proof}

Given the collection of theorems in this section, it is natural to make the following conjecture.

\begin{conjecture}\label{obviousconjecture} If two $\phi^4$ graphs have equal period, then they have equal extended graph permanent. \end{conjecture}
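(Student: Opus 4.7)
The conjecture is downstream of a substantially deeper open problem: classifying exactly when two $4$-point $\phi^4$ graphs have equal Feynman periods. As the introduction emphasizes, every currently known coincidence of periods is explained by composing the three operations (decompletion choice, Schnetz twist, planar duality) together with the multiplicative splitting across a $2$-vertex cut. My proof plan would therefore factor the conjecture into two parts: an assembly of invariance results (which the paper has essentially done) and a structural reduction step (which is the genuinely hard part).

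First, I would consolidate Theorem \ref{egpcompletion}, Proposition \ref{schnetz}, Corollary \ref{phi4dual}, and Theorem \ref{2vertexcut} into a single corollary stating that if $G$ and $G'$ differ by any finite sequence of the known period-preserving moves, then $\text{GPerm}^{[p]}(G) = \text{GPerm}^{[p]}(G')$ for every odd prime $p$ (up to the global sign ambiguity discussed in Section \ref{signambiguity}, which must be tracked carefully across the $2$-cut splitting where Wilson's theorem introduces a sign). This part is routine composition: each move independently preserves the sequence, so any word in the moves does too.

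The second step is the essential one: given two $\phi^4$ graphs $G_1$ and $G_2$ with equal periods, produce such a sequence of moves connecting them. This is where I expect almost all difficulty to lie, because no such theorem is known; it is morally equivalent to the ``weak period conjecture'' for $\phi^4$ theory. A purely combinatorial attack seems out of reach, so I would instead try to bypass it via an intermediate invariant. The most promising route is the polynomial $\widetilde{F}_{G,v'}$ from the corollary to the main theorem: since $\text{GPerm}^{[p]}(G)$ is (up to sign) the point count $[\widetilde{F}_{G,v'}]_p \bmod p$, one could hope to show that the period itself determines $[\widetilde{F}_{G,v'}]_p \bmod p$ for all but finitely many $p$. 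Concretely, one would try to relate $\widetilde{F}_{G,v'}$ to the Kirchhoff polynomial $\Psi$ or to the graph hypersurface whose point counts give the $c_2$ invariant, and then invoke (or conditionally assume) the existing conjectural machinery tying period data to point counts of graph hypersurfaces.

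The main obstacle is clear: there is no known way to recover period data from a purely algebraic invariant of the graph, and the converse of the conjecture is explicitly false (see Appendix \ref{chartofgraphs}), which rules out any argument that tries to identify EGP with the period up to finite ambiguity. Realistically, I expect that an unconditional proof is not currently accessible, and the best one can do is prove the conditional statement: \emph{assuming} that period equivalence of $\phi^4$ graphs is generated by the listed operations, Conjecture \ref{obviousconjecture} follows immediately from the invariance theorems of Section \ref{graphoperations}. A weaker but achievable subgoal would be to verify the conjecture computationally for all $\phi^4$ graphs up to some fixed loop order, using the cofactor techniques of Section \ref{egpcomp}, which at minimum would rule out the existence of a small counterexample and provide evidence consistent with the meta-conjecture on period equivalence.
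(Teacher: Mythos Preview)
The statement is labeled a \emph{conjecture} in the paper, and the paper does not prove it; it is presented as the natural hypothesis motivated by the invariance results of Section~\ref{graphoperations} and supported by the computational data in Appendix~\ref{chartofgraphs}. Your assessment is therefore exactly right: the paper's ``proof'' consists solely of the evidence you describe in your first step (Theorem~\ref{egpcompletion}, Proposition~\ref{schnetz}, Corollary~\ref{phi4dual}, Theorem~\ref{2vertexcut}), and your conditional formulation---that the conjecture follows \emph{if} period equivalence is generated by the known operations---is precisely the status the paper leaves it in. There is nothing further to compare; your analysis of the obstruction (that the structural reduction step is an open problem on the level of the weak period conjecture) is accurate and matches the paper's own framing.
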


\noindent As our motivation for creating this invariant was its potential to help understand the period, the data suggests that the connection is in fact there.

\section{Computation of the extended graph permanent}
\label{egpcomp}

The permanents of large matrices are notoriously difficult to compute; the lack of row-reduction techniques mean that usually computations are done using the definition or cofactor expansion. However, as we desire only the residue, we can use row reduction, provided we have not prior used cofactor expansion to reduce the number of identical blocks. Further, our matrices are constructed with a great deal of repetition, which results in easier cofactor expansion. In this section, we simplify the computation of the extended graph permanents, and produce closed forms for several graph families. We do this using standard combinatorial counting techniques and cofactor expansion. 

To emphasize the structural nature of our cofactor expansion, we will represent the permanents of $k$-matrices as weighted graphs, weights on edges  (vertices) counting the number of columns (rows) appearing in the matrix that represent that edge (vertex). Since we are representing the permanent graphically, we will differentiate from graphs by writing these weighted graph representations of the permanent in square brackets.

Representations of this type are not unique. If a graph has multiple vertices of weight zero, those vertices are indistinguishable, as they correspond to rows that do not occur in the matrix. However, up to reordering the rows and columns, the graphical representation does uniquely produce a matrix. Trivially, the matrix must be square if we are to take a permanent, and hence we require that the sum of the vertex weights must be equal to the sum of the edge weights.

\begin{example} \begin{align*} \left[ \raisebox{-.48\height}{\includegraphics{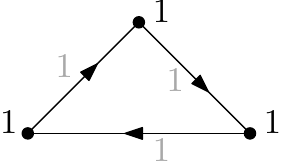}} \right] 
&= \text{Perm} \left[ \begin{array}{ccc} 1 & 0 & -1 \\ -1 & 1 & 0 \\ 0 & -1 & 1 \end{array} \right]\\
\left[ \raisebox{-.48\height}{\includegraphics{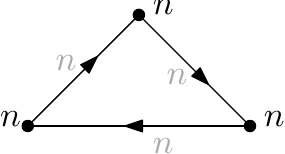}} \right] 
&= \text{Perm} \left( \mathbf{1}_n \otimes \left[ \begin{array}{ccc} 1 & 0 & -1 \\ -1 & 1 & 0 \\ 0 & -1 & 1 \end{array} \right] \right) 
 \end{align*} \end{example}

There is a general method for writing the cofactor expansion that occurs at vertices using this method. Suppose that vertex $v$ has weight $w_v \neq 0$, and further that $n$ incident edges $e_1 = (v,v_1),...,e_n = (v,v_n)$ have weights $w_1, ..., w_n$. Let $m_{e_i}$ denote the value in the matrix of edge $e_i$ at vertex $v$. Performing cofactor expansion along all rows corresponding to vertex $v$, 
\begin{align*}
\left[ \raisebox{-.48\height}{\includegraphics[scale=0.6]{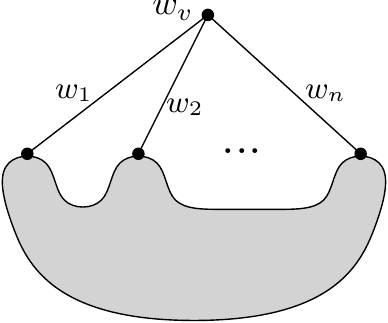}} \right] = \sum_{\substack{k_1 + \cdots + k_n = w_v\\ k_i \geq 0}} w_v! \prod_{j=1}^n \binom{w_j}{k_j}  m_{e_j}^{k_j} \left[ \raisebox{-.48\height}{\includegraphics[scale=0.6]{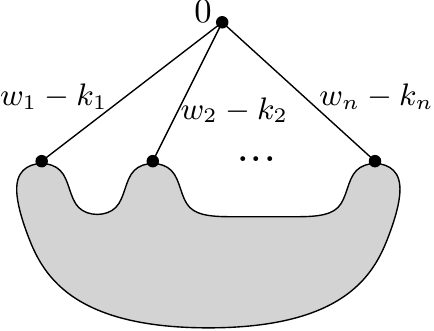}} \right].
\end{align*}
The $w_v!$ factor comes from the fact that order matters in the selection of edges.


One may also do cofactor expansion along a column, which corresponds to an edge. Herein, for algorithmic simplicity we will only use edges when the weight on one vertex is zero. Let $m_{v_i}$ be the value in the matrix at edge $e_i$ and vertex $v_i$. Then, with weights $w_i$ and $x_i$,\begin{align*}
\left[ \raisebox{-.48\height}{\includegraphics[scale=0.6]{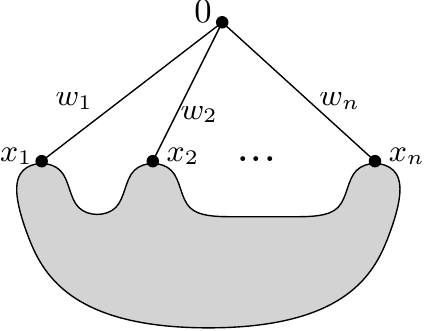}} \right]
&=  \prod_{i=1}^n \frac{x_i!}{(x_i-w_i)!} m_{v_i}^{w_i} \left[ \raisebox{-.48\height}{\includegraphics[scale=0.6]{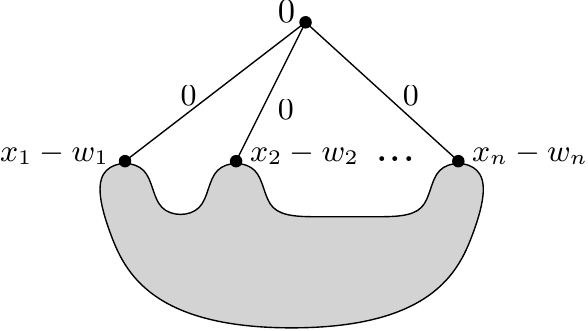}} \right]\\
&= \prod_{i=1}^n \frac{x_i!}{(x_i-w_i)!} m_{v_i}^{w_i} \left[ \raisebox{-.48\height}{\includegraphics[scale=0.6]{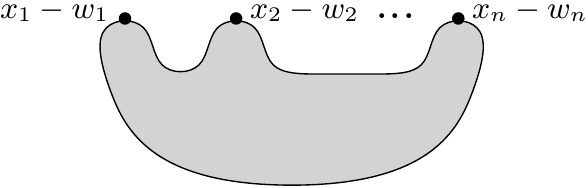}} \right].
\end{align*} This last line follows from the fact that an edge with weight zero contributes nothing to the matrix and is hence removable. Similarly, a vertex with weight zero and all incident edges having weight zero can be removed.

While orientations are ultimately arbitrary, we will include directions on edges to make the computations easier to follow. We will generally only apply the orientation when we are about to act upon that edge or an incident vertex, purely for the sake of simplicity in the figures.

\subsection{Trees}\label{treesnshit}

Immediately, the signed incidence matrix of a tree with a row deleted, $M$, will give a square matrix, and hence $M = \overline{M}$. As such, we are interested in $\mathbf{1}_n \otimes M$ for all primes $p=n+1$. Applying Wilson's Theorem to a minimal non-trivial tree, 
$$ \left[ \raisebox{-.48\height}{\includegraphics{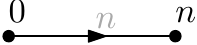}} \right] = \text{Perm} \left[ \mathbf{1}_{n} \right] = n! \equiv -1 \pmod{n+1}.$$ Note that the $4$-point $\phi^4$ graph $P_{1,1}$, the unique loop-free graph with two vertices and two edges, falls into this case. As $n$ will be even after prime two, the duplicated-edges view of the permanent is agnostic to one edge duplicated $n = 2k$ times or two edges in parallel duplicated $k$ times.

For general trees, we progress inductively. As any tree $T$ with at least two vertices starts with the special vertex having weight $0$ and all edges and non-special vertices with weight $n$, we will assume that the special vertex was a leaf. Hence, \begin{align*} \left[ \raisebox{-.48\height}{\includegraphics{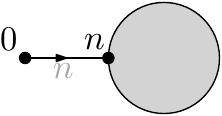}} \right] &= n! \left[ \raisebox{-.48\height}{\includegraphics{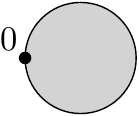}} \right]. \end{align*} This second figure represents the $n+1^\text{st}$ graph permanent of a smaller tree. Hence, we may move the special vertex again to a leaf. With base case established prior, we get that $\text{GPerm}^{[p]}(T) = (-1)^{|V(T)|-1} \pmod{p}$.

\subsection{Wheels}\label{wheels} An important family of graphs is the wheels, built from cycles by adding an apex vertex. Consider a wheel with $w$ vertices in the outer cycle, call it $W_w$. While only $W_3$ and $W_4$ are $4$-point $\phi^4$ graphs, all have $|E(W_w)| = 2(|V(W_w)|-1)$, and hence have extended graph permanent sequences built over all odd primes.

For prime $2n+1$, 
\begin{align*} \left[ \raisebox{-.48\height}{\includegraphics{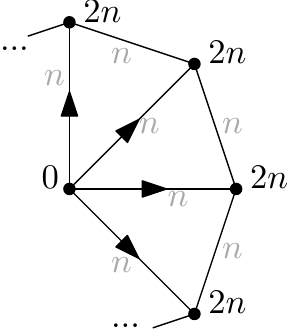}} \right] &= \left( \frac{(2n)!}{n!} \right)^w \left[  \raisebox{-.48\height}{\includegraphics{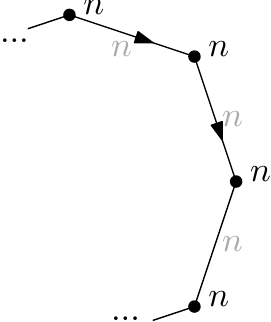}} \right] \\
&\hspace{-1.5cm}= \left( \frac{(2n)!}{n!}\right)^w \sum_{k=0}^n \binom{n}{k} \binom{n}{n-k} n!(-1)^k  \left[  \raisebox{-.48\height}{\includegraphics{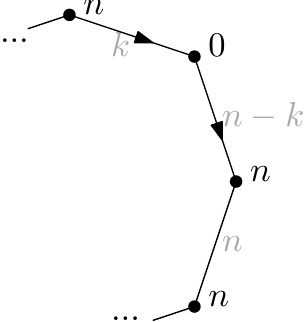}} \right] \\
&\hspace{-1.5cm}=  \left( \frac{(2n)!}{n!} \right)^w \sum_{k=0}^n (-1)^n \binom{n}{k}^2 n! \frac{n!}{k!} \frac{n!}{(n-k)!} \left[ \hspace{-1mm} \begin{array}{c} w-1 \\ \text{vertices} \end{array} \hspace{-1mm} \left\{  \raisebox{-.48\height}{\includegraphics{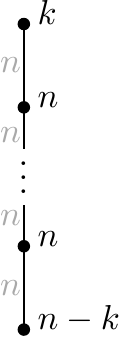}}  \right.  \right] . \\
&\hspace{-1.5cm}=  \left( \frac{(2n)!}{n!} \right)^w \sum_{k=0}^n (-1)^n \binom{n}{k}^3 n!^2 \left[ w-1 \left\{  \raisebox{-.48\height}{\includegraphics{wheel3}}  \right.  \right] .
\end{align*}

We pause in this calculation to consider the permanent of the path created. We will orient all edges away from the middle of the path. We then get; 
\begin{align*}
\left[ w-1 \left\{  \raisebox{-.48\height}{\includegraphics{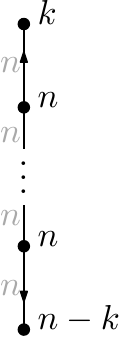}} \right. \right] 
&=  \frac{n!}{(n-k)!} \frac{n!}{k!} \left[ w-1 \left\{  \raisebox{-.48\height}{\includegraphics{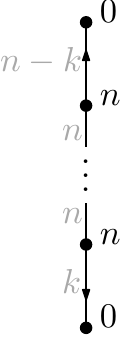}} \right. \right] \\
&\hspace{-.6cm}=  \binom{n}{k}^2 n!^2 (-1)^{n-k}(-1)^k \left[ w-3 \left\{  \raisebox{-.48\height}{\includegraphics{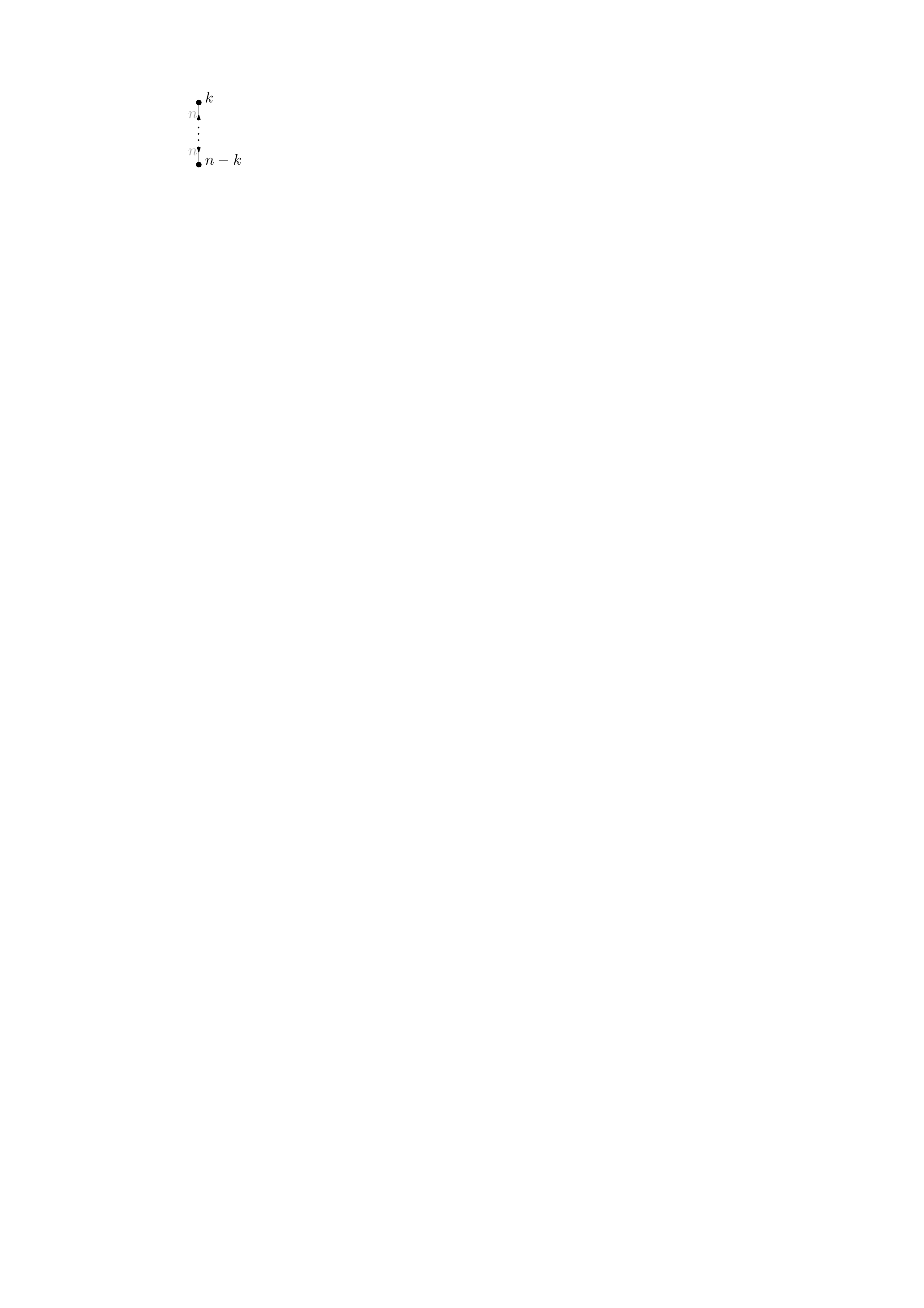}} \right. \right] \\
&\hspace{.75cm} \vdots \\
&\hspace{-.6cm}=  \binom{n}{k}^{w-3}  n!^{w-3} (-1)^{n \lfloor \frac{w-3}{2}  \rfloor}
\begin{cases} \left[  \raisebox{-.48\height}{\includegraphics{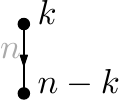}} \right] \vspace{1mm} & \text{ if $w-1$ is even} \\  
\left[  \raisebox{-.48\height}{\includegraphics{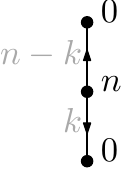}} \right] & \text{ if $w-1$ is odd}
\end{cases} \\
&\hspace{-.6cm}=   \binom{n}{k}^{w-3}  n!^{w-3} (-1)^{n \lfloor \frac{w-3}{2}  \rfloor}
\begin{cases}(-1)^k n! & \text{ if $w-1$ is even} \\  (-1)^n n! & \text{ if $w-1$ is odd}
\end{cases}.
\end{align*}

We continue with the original calculation;
\begin{align*}
 \left[ \raisebox{-.48\height}{\includegraphics{wheel1}} \right] 
&\equiv  \left( \frac{(2n)!}{n!} \right)^w \sum_{k=0}^n (-1)^n \binom{n}{k}^3 n!^2 \left[ w-1 \left\{  \raisebox{-.48\height}{\includegraphics{wheel3}}  \right.  \right]\\
&\hspace{-1.2cm}\equiv  \left( \frac{-1}{n!} \right)^w \sum_{k=0}^n (-1)^n \binom{n}{k}^w n!^w \cdot \begin{cases} (-1)^{k+n\lfloor \frac{w-3}{2}\rfloor} \text{ if $w$ is odd} \\ (-1)^{n+n\lfloor \frac{w-3}{2}\rfloor} \text{ if $w$ is even}\end{cases} \\
&\hspace{-1.2cm}\equiv \begin{cases}  (-1)^{w+n\lfloor \frac{w-1}{2} \rfloor} \sum_{k=0}^n (-1)^k \binom{n}{k}^w \text{ if $w$ is odd} \\ (-1)^{w+n \lfloor \frac{w+1}{2} \rfloor}  \sum_{k=0}^n \binom{n}{k}^w \text{ if $w$ is even} \end{cases} \\
&\hspace{-1.2cm}\equiv (-1)^{w + n\lceil \frac{w-1}{2} \rceil} \sum_{k=0}^n (-1)^{kw}\binom{n}{k}^w  \pmod{2n+1}. 
\end{align*}

As a factor $(-1)^n$ corresponds to reversing the direction of the $n$ columns corresponding to a common edge, we may write this as $$(-1)^{w} \sum_{k=0}^n (-1)^{kw}\binom{n}{k}^w  \pmod{2n+1}.$$

It is interesting to note that, if $2n+1$ is congruent to three modulo four and hence $n$ is odd, then \begin{align*}  \sum_{k=0}^n (-1)^k \binom{n}{k}^w &= \binom{n}{0}^w - \binom{n}{1}^w + \cdots - \binom{n}{n}^w \\ &= \sum_{k=0}^{(n-1)/2} \left( \binom{n}{k}^w - \binom{n}{n-k}^w \right) =0. \end{align*} This vanishing permanent property generalizes to graphs with a particular symmetry. To prove this generalized equality, we require the more graphical interpretation of the permanent calculation from Remark \ref{altrem}.

\begin{definition} Let $G$ be a graph. If graph automorphism $\tau$ has $\tau(\tau(v)) =v$ for all $v \in V(G)$, then $\tau$ is an \emph{involution}. For a particular involution $\tau$, we will say that an edge $e =uv$ is \emph{crossing} if $\tau(u)=v$. \end{definition}

\begin{theorem} \label{symmetry} Suppose $G$ is a graph. If there is an involution $\tau$ with an odd number of crossing edges and at least one vertex fixed by $\tau$, then the permanent of the associated $k$-matrix for $G$ is identically zero. \end{theorem}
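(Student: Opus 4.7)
The plan is to construct a fixed-point-free, sign-reversing involution on the taggings contributing to the permanent, forcing cancellation in pairs. By Remark~\ref{casualintro}, the permanent of the $k$-matrix equals $k!^{|V(G)|-1}\sum_\alpha (-1)^{t(\alpha)}$, where $\alpha$ ranges over functions $E(G) \to V(G)$ with $\alpha(e)$ an endpoint of $e$ and with each non-special vertex receiving exactly $k$ tags, and $t(\alpha)$ counts the edges with $\alpha(e)$ equal to the tail of $e$ under the fixed orientation.

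First, I would use Theorem~\ref{specialinvariant} to take the special vertex to be one fixed by $\tau$, which exists by hypothesis; then $\tau$ permutes the non-special vertices, and the condition on a valid tagging is $\tau$-equivariant. Next, I would choose a $\tau$-compatible orientation of $G$: for each orbit $\{e, \tau(e)\}$ of size two, orient $e$ arbitrarily and orient $\tau(e)$ so that $\tau$ sends the tail of $e$ to the tail of $\tau(e)$ (consistent because $\tau^2 = \mathrm{id}$); for edges with both endpoints fixed and for crossing edges, orient arbitrarily. Define the involution $\phi$ on taggings by $\phi(\alpha)(e) = \tau(\alpha(\tau(e)))$.

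To see $\phi$ has no fixed points, pick any crossing edge $e = uv$ with $\tau(u)=v$, guaranteed by hypothesis. Since $\tau(e)=e$, the equation $\phi(\alpha)=\alpha$ would force $\alpha(e)=\tau(\alpha(e))$; but $\tau$ swaps the two endpoints of $e$, a contradiction. To see $\phi$ is sign-reversing, I compare $t(\alpha)$ and $t(\phi(\alpha))$ edge by edge. On a swapped pair $\{e, \tau(e)\}$ with $\tau(e)\neq e$, the orientation choice ensures tail-tags go to tail-tags and head-tags to head-tags, so the pair's contribution to $t$ is unchanged. On an edge with both endpoints fixed the tag is literally preserved. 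On each crossing edge the tag migrates from one endpoint to the other, flipping tail/head status exactly once. Since the number of crossing edges is odd, $t(\phi(\alpha))-t(\alpha)$ is odd, and $\phi$ flips the overall sign. The signed sum therefore vanishes, so the permanent is zero.

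The main obstacle is the orientation bookkeeping: verifying that a coherent $\tau$-compatible orientation exists on all non-crossing edges simultaneously, and then checking carefully how a crossing edge's contribution to $t(\alpha)$ interacts with its arbitrary orientation to yield exactly one sign flip per crossing edge. Once these technicalities are settled, the cancellation via $\phi$ is immediate.
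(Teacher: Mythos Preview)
Your proposal is correct and follows essentially the same approach as the paper: choose a $\tau$-fixed special vertex, orient non-crossing edges $\tau$-compatibly, and pair taggings via the involution induced by $\tau$, observing that the odd number of crossing edges forces a sign flip. Your write-up is in fact more explicit than the paper's, which simply partitions taggings by the tag on one fixed crossing edge and asserts the bijection. One small correction: do not invoke Theorem~\ref{specialinvariant} to justify the choice of special vertex, since that theorem only gives invariance modulo $k+1$, whereas here you want the integer permanent to vanish; the $k$-matrix already depends on the choice of special vertex and orientation, so you are free simply to \emph{make} that choice, exactly as the paper does.
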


\begin{proof} Set a vertex fixed by $\tau$ as the special vertex. For non-crossing edges $e=uv$, orient such that the involution preserves the orientation; if $e = (u,v)$ then $\tau(u)\tau(v)= (\tau(u),\tau(v))$.  Finally, orient the crossing edges arbitrarily.

Valid edge colourings and taggings are preserved by the automorphism. Sign changes occur only when an odd number of tags change direction, and hence only due to crossing edges. We may therefore partition all taggings into two sets by fixing a crossing edge and dividing the taggings based on which vertex incident to this edge received the tag. As the automorphism provides an obvious bijection between sets, the sum is zero. \end{proof}

\begin{corollary}\label{symmetrycor} \sloppy If a decompleted $4$-regular graph admits an involution with an odd number of crossing edges and at least one fixed vertex, then $\text{GPerm}^{[p]}(G) = 0$ for all primes $p \equiv 3 \pmod{4}$.\end{corollary}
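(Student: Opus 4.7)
The plan is to derive Corollary \ref{symmetrycor} from Theorem \ref{symmetry} by lifting the hypothesised involution on $G$ to an involution on the duplicated graph $G^{[n]}$ that actually controls the permanent at the relevant prime, and then checking that the odd-crossing and fixed-vertex conditions of Theorem \ref{symmetry} are preserved by the lift whenever $n$ is odd.

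First I would unpack what prime $p \equiv 3 \pmod 4$ means for the matrix we are looking at. Since $G$ is a decompleted $4$-regular graph, $|E(G)| = 2(|V(G)|-1)$, so the extended graph permanent is defined at every odd prime $p = 2n+1$. Writing $p = 4m+3$ forces $n = 2m+1$, so $n$ is odd; this is the single arithmetic fact the corollary hinges on. By Remark \ref{altrem}, $\text{GPerm}^{[p]}(G) \equiv \text{Perm}(\mathbf{1}_{2n \times 1} \otimes M_n) \pmod p$, and the right-hand side is precisely the permanent of a $k$-matrix (with $k = 2n$) associated to the graph $G^{[n]}$, so Theorem \ref{symmetry} will apply directly to $G^{[n]}$ the moment we produce a suitable involution on it.

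Next, given the involution $\tau$ of $G$ with an odd number $\kappa$ of crossing edges and at least one fixed vertex $v_0$, I would define a lifted involution $\tau^{[n]}$ on $G^{[n]}$ as follows: $\tau^{[n]}$ agrees with $\tau$ on vertices, and for each edge $e$ of $G$ it sends the $i^\text{th}$ parallel copy of $e$ to the $i^\text{th}$ parallel copy of $\tau(e)$. This is manifestly an involution and it still fixes $v_0$. A parallel copy of $e$ is crossing under $\tau^{[n]}$ iff $e$ is crossing under $\tau$, so $\tau^{[n]}$ has exactly $n\kappa$ crossing edges. Since $n$ and $\kappa$ are both odd, $n\kappa$ is odd, and the hypotheses of Theorem \ref{symmetry} are met for $(G^{[n]},\tau^{[n]})$.

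Applying Theorem \ref{symmetry} to $G^{[n]}$ with the involution $\tau^{[n]}$ then yields $\text{Perm}(\mathbf{1}_{2n \times 1} \otimes M_n) = 0$, and combining with the first paragraph gives $\text{GPerm}^{[p]}(G) \equiv 0 \pmod p$ as claimed. There is really only one piece of bookkeeping to be careful about, and I regard it as the main (minor) obstacle: the proof of Theorem \ref{symmetry} requires orienting non-crossing edges compatibly with the involution, whereas Remark \ref{altrem} requires that the $n$ parallel copies of each edge in $G^{[n]}$ be oriented identically. These two constraints are compatible because one can first fix a $\tau$-compatible orientation on $G$ and then copy it onto every parallel bundle of $G^{[n]}$, so both conventions hold simultaneously and no further work is needed.
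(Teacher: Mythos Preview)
Your proof is correct and follows exactly the same route as the paper: lift the involution to $G^{[n]}$, observe that for $p\equiv 3\pmod 4$ the duplication factor $n$ is odd so the crossing-edge count $n\kappa$ stays odd, and invoke Theorem~\ref{symmetry}. The paper's own proof is the one-line version of this; your additional remarks on orientation compatibility and the explicit construction of $\tau^{[n]}$ are sound elaborations but not new ideas.
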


\begin{proof} If $G$ has an odd number of crossing edges, then so does $G^{[k]}$ for odd $k$, and the result follows. \end{proof}

All wheels $W_k$ for odd $k$ have such involutions. Computing up to prime $p=4999$, this actually explains all zeros in the extended graph permanent of $W_3$. Wheel $W_5$ has $\text{GPerm}^{[5]}(W_5) \equiv 0 \pmod{5}$, though the actual permanent of the associated matrix is non-zero, so this does not explain all zeros up to residues.

The graph shown in Figure \ref{p711}, named $P_{7,11}$ in  \cite{Sphi4}, is a counterexample to the converse of Theorem \ref{symmetry}. The figure is drawn so that the marked symmetry captures the only element of the symmetric group, which has an even number of crossing edges and no fixed vertices. Choosing the grey vertex as special, the permanent of the signed incidence $2$-matrix is equal to zero. Up to prime $p=199$, this is the only identically zero permanent in the sequence.

\begin{figure}[h]
  \centering
      \includegraphics[scale=1.0]{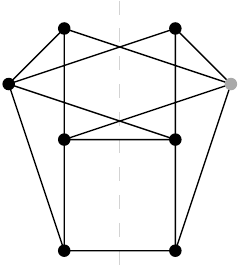}
  \caption{A graph that provides a counterexample to the converse of Theorem \ref{symmetry}.}
\label{p711}
\end{figure}

We see a number of graphs with sequences containing zeroes at all primes congruent to three modulo four in Appendix \ref{chartofgraphs}. All of these graphs have a decompletion of the type described above, and hence all are explained by Corollary~\ref{symmetrycor}.

\subsection{Zig-zag graphs}

The zig-zag graphs are an important family in $\phi^4$ theory. The family has known closed form period (see \cite{BSZigzag}). Graphically, it is a family whose completions are \emph{circulant graphs} $C^n_{a,b}$; $$V(C^n_{a,b}) = \{0,...,n-1\}, \hspace{2mm} E(C^n_{a,b}) = \left\{ \{i,j\} : |i-j| \in \{a,b\pmod{n}\}  \right\},$$ where $a=1$, $b=2$. 

To generalize, consider the zig-zag graph on $m$ vertices, $m \geq 4$, as seen in Figure \ref{zarb02}. We will take the right-most vertex as the special vertex, and for the sake of future row reduction use the edges highlighted as the first $m-1$ columns in the signed incidence matrix. As such, our signed incidence matrix is 
\begin{align*} \left[
\begin{array}{cccccc|ccccccc}
1&0 &0 & &0 &0 &1 &0 &0 &0 & &0 &1 \\
-1&1 &0 & &0 &0 &0 &1 &0 &0 && 0 &0 \\
0&-1 &1 & &0 &0 &-1 &0 &1 &0 & &0 &0 \\
0&0 &-1 & &0 &0 &0 &-1 &0 &1 & &0 &0 \\
& & &\ddots & & & & & & &\ddots & & \\
0&0 &0 & &1 &0 &0 &0 &0 &0 & &1 &0 \\
0&0 &0 & &-1 &1 &0 &0 &0 &0 & &0 &0
\end{array} \right]_{m-1,2(m-1)}. \end{align*}
We may reduce this matrix, since we will be taking the permanent modulo $2n+1$. Hence, it reduces to
\begin{align*} \left[
\begin{array}{cccccc|ccccccc}
1&0 &0 & &0 &0 &1 &0 &0 &0 & &0 &1 \\
0&1 &0 & &0 &0 &1 &1 &0 &0 && 0 &1 \\
0&0 &1 & &0 &0 &0 &1 &1 &0 & &0 &1 \\
0&0 &0 & &0 &0 &0 &0 &1 &1 & &0 &1 \\
& & &\ddots & & & & & & &\ddots & & \\
0&0 &0 & &1 &0 &0 &0 &0 &0 & &1 &1 \\
0&0 &0 & &0 &1 &0 &0 &0 &0 & &1 &1
\end{array} \right]. \end{align*}

Label this right block $A$. Then, the matrix used for prime $2n+1$ in the extended graph permanent is $\mathbf{1}_{2n \times n} \otimes \left[ I_{m-1} | A \right]$. Cofactor expansion along the columns in the identity matrix gives $$ \text{Perm} (\mathbf{1}_{2n \times n} \otimes \left[ I_{m-1} | A \right]) = \left( \frac{(2n)!}{n!} \right)^{m-1} \cdot \text{Perm}(\mathbf{1}_n \otimes A) .$$

\begin{figure}[h]
  \centering
      \includegraphics[scale=0.90]{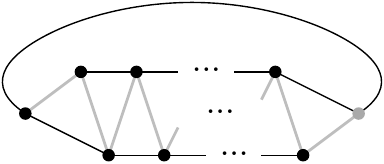}
  \caption{The general zig-zag graph, used to build the signed incidence matrix.}
\label{zarb02}
\end{figure}

What we see with this matrix $A$ is familiar; it is the incidence matrix of an undirected path on $m-1$ vertices, with one additional hyper-edge that meets all vertices. In our terms, all edges and vertices receive weight $n$, including the hyper-edge. Using cofactor expansion along the columns corresponding to the hyper-edge followed by our usual tricks;

\begin{align*}
\text{Perm}(I_n \otimes A) &= \sum_{\substack{k_1+\cdots + k_{m-1} = n \\ k_i \geq 0}} \binom{n}{k_1} \cdots \binom{n}{k_{m-1}} n! \left[ \raisebox{-.48\height}{\includegraphics[scale=.7]{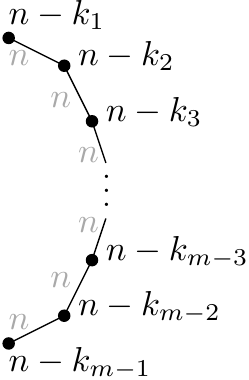}} \right], \end{align*}
where
\begin{align*} \left[ \raisebox{-.48\height}{\includegraphics[scale=.7]{zarb03}} \right] &= \frac{n!}{k_1!} \left[ \raisebox{-.48\height}{\includegraphics[scale=.7]{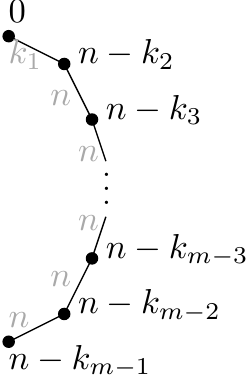}} \right] \\
&\hspace{-1cm}= \frac{n!}{k_1!} \frac{(n-k_2)!}{(n-k_1-k_2)!} \left[ \raisebox{-.48\height}{\includegraphics[scale=.7]{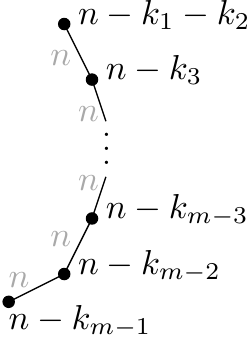}} \right] \\
&\hspace{-1cm}= \frac{n!}{k_1!} \frac{(n-k_2)!}{(n-k_1-k_2)!} \frac{n!}{(k_1+k_2)!} \left[ \raisebox{-.48\height}{\includegraphics[scale=.7]{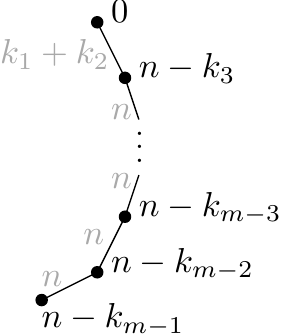}} \right] \\
&\hspace{-1cm}= \frac{n!}{k_1!} \frac{(n-k_2)!}{(n-k_1-k_2)!} \frac{n!}{(k_1+k_2)!}\frac{(n-k_3)!}{(n-k_1-k_2-k_3)!} \left[ \raisebox{-.48\height}{\includegraphics[scale=.7]{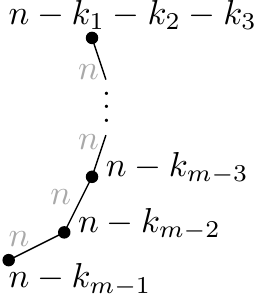}} \right] \\
&\vdots \\
&\hspace{-1cm}= \frac{n!}{k_1!} \frac{(n-k_2)!}{(n-k_1-k_2)!} \cdots \\
&\hspace{.3cm} \frac{n!}{(k_1+ \cdots +k_{m-3})!} \frac{(n-k_{m-2})!}{(n-k_1-\cdots - k_{m-2})!}\left[ \raisebox{-.48\height}{\includegraphics[scale=.6]{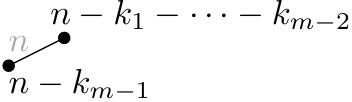}} \right] \\
&\hspace{-1cm}= \frac{n!}{k_1!} \frac{(n-k_2)!}{(n-k_1-k_2)!} \cdots \frac{n!}{(k_1+ \cdots +k_{m-3})!} \frac{(n-k_{m-2})!}{(n-k_1-\cdots - k_{m-2})!}n!\\
&\hspace{-1cm}= (n!)^{m-2} \binom{n-k_2}{k_1} \binom{n-k_3}{k_1+k_2} \cdots \binom{n-k_{m-2}}{k_1+ \cdots + k_{m-3}}.
\end{align*}

Hence, 
\begin{align*}
 \text{Perm} (\mathbf{1}_{2n \times n} \otimes \left[ I_{m-1} | A \right]) & \\
&\hspace{-3.4cm}= \left( \frac{(2n)!}{n!} \right)^{m-1} \sum_{\substack{k_1+\cdots + k_{m-1} = n \\ k_i \geq 0}} n!^{m-1} \left( \prod_{i=1}^{m-1} \binom{n}{k_i} \prod_{i=1}^{m-3} \binom{n-k_{i+1}}{\sum_{j=1}^i k_j} \right) \\
&\hspace{-3.4cm}= (2n)!^{m-1} \sum_{\substack{k_1+\cdots + k_{m-1} = n \\ k_i \geq 0}}  \left( \prod_{i=1}^{m-1} \binom{n}{k_i} \prod_{i=1}^{m-3} \binom{n-k_{i+1}}{\sum_{j=1}^i k_j} \right)\\
&\hspace{-3.4cm} \equiv (-1)^{m-1} \sum_{\substack{k_1+\cdots + k_{m-1} = n \\ k_i \geq 0}} \left( \prod_{i=1}^{m-1} \binom{n}{k_i} \prod_{i=1}^{m-3} \binom{n-k_{i+1}}{\sum_{j=1}^i k_j} \right) \pmod{2n+1}.
\end{align*}

\subsection{Computational Simplicity}
\label{niceness} 

In this subsection, we expressly forbid graphs with loops or vertices with precisely one neighbour.

It is possible to simplify the techniques introduced in this section to produce these closed forms in a more algorithmic way for individual graphs. This is due to the cancellation of terms that occurs, and an algorithmic way of gathering the factorials into binomials. We present this method here.

We will say that we \emph{act on} a vertex when we perform cofactor expansion on the collection of rows corresponding to that vertex. Similarly, we will say we act on an edge by performing cofactor expansion on the associated set of columns. We will similarly say we \emph{move to} a vertex or edge when we act on an incident object.

\sloppy Let $G$ be a graph. As before, let $L = \lcm (|V(G)|-1, |E(G)|)$, $\V = \frac{L}{|V(G)|-1}$, and $\E = \frac{L}{|E(G)|}$, so that for prime $\V n+1$ non-special vertices receive weight $\V n$ and edges receive weight $\E n$. Suppose we produce a closed form for $G$ by first acting on the edges incident to the special vertex, and then acting on a set of vertices, and their incident edges, such that the deletion of these vertices produces a tree. Finally, act on vertices of degree one and incident edges until the computation is complete.

For each non-special vertex, we therefore produce a factor of $(\V n)!$ in the numerator. Further, if vertex $v \in V(G)$ is not a vertex that is acted on initially in the production of a tree, most factors produced for this vertex cancel; every time we move to $v$ from an incident edge we produce a factorial in the numerator equal to the current weight, and a factorial in the denominator equal to the weight after this action. Thus, repeated actions telescope, and only the first weight remains in the numerator, and only the last weight remains in the denominator.

For edges, this holds true also. Each edge in the initial set of actions on vertices produces a binomial in $\E n$. All remaining edges will produce a factor of $(\E n)!$ in the numerator, and $(\E n - w_v)!$ in the denominator, where $w_v$ is the weight of the incident vertex. It follows then that these terms may be gathered into further binomials in $\E n$, as there are terms $\frac{(\E n)!}{w_v! (\E n - w_v)!} $.

As such, the graph produces a closed form that contains only factors $(\V n)!^{|V(G)|-1}$, summations from $0$ to $\E n$, a factor of $-1$ to some power, and a set of binomials in $\E n$, the number of these equal to the number of edges not incident to the special vertex. Further, these binomials come in two distinct flavours; those coming from an initial action on a vertex prior to establishing a tree, and those of the form $\binom{\E n}{w_v}$ where $w_v$ is the last weight on a vertex.

\begin{example}  Consider the wheel on four spokes, $W_4$. We saw in Section~\ref{wheels} that setting the apex vertex as the special vertex, we produce closed form $(2n)!^4\sum_{x=0}^n \binom{n}{x}^4$. Using the method described in this section, we can compute this using the following sequence. We include the variable distribution corresponding to acting on the vertex of degree two for clarity, and colour the vertex being acted on grey. This produces a closed form as follows.
$$\includegraphics[width=\textwidth]{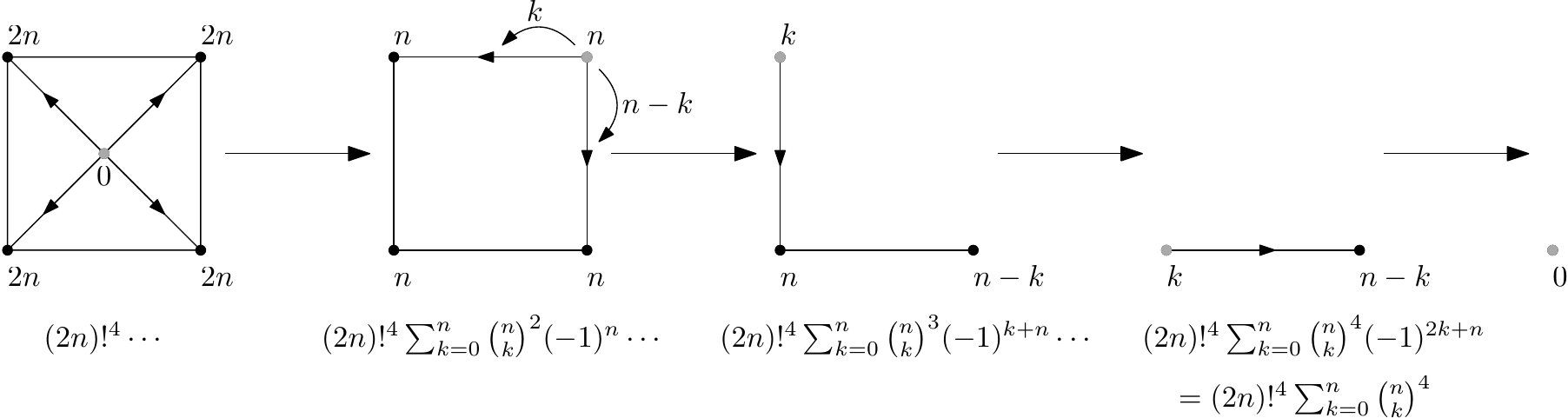}$$

If we instead choose a different special vertex, we may use the following sequence.
$$\includegraphics[width=\textwidth]{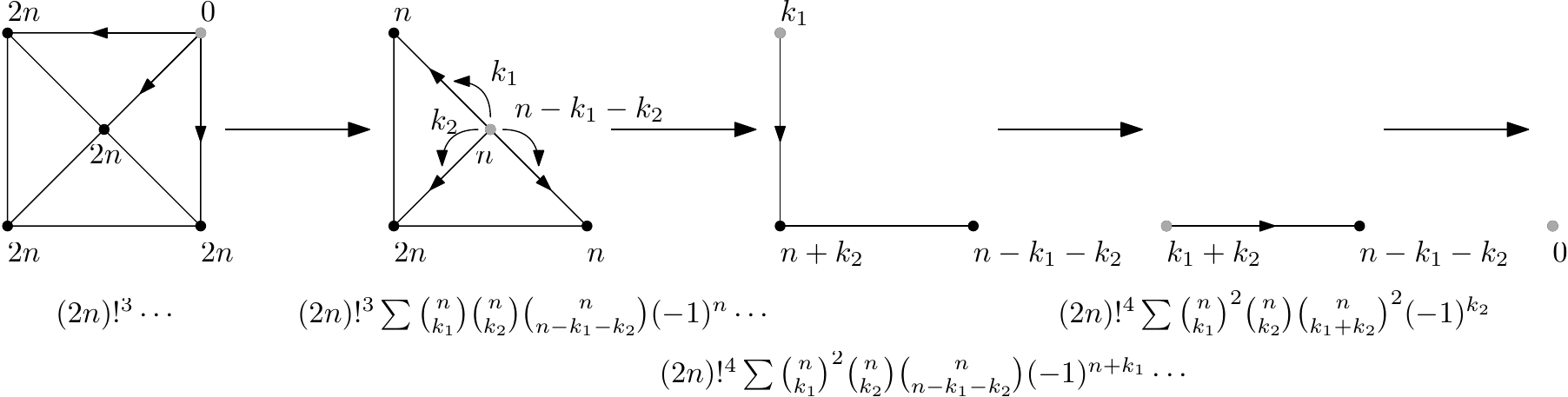}$$

 While these forms do not appear similar, and certainly are not equal prior to taking residues, we know from the previous work that the residues must agree modulo $2n+1$. Traditional methods, such as the Wilf-Zeilberger algorithm, generally will not work in explaining equalities such as this, as here we are comparing summations modulo $p$, and the permanent itself is affected by choice of special vertex. \end{example}

This method produces computationally easy formulas, though not necessarily the fastest. Computations using methods from earlier in this section may require careful consideration of the bounds in the summations, as otherwise negative factorial term may appear and break computer calculations. A binomial of the form $\binom{n}{x}$ for $x<0$ will return zero, and the computation will proceed as desired. Hence, closed forms produced using this method are immediately computer-ready. Closed forms for primitive $4$-points $\phi^4$ graphs up to seven loops are included in Appendix~\ref{chartofequations}. All were produced using this method.

\section{The extended graph permanent as an affine hypersurface}\label{affinev}

Some of the first extended graph permanent sequences computed to a reasonable length were similar to those of the $c_2$ invariant (see Section \ref{conclusions} and \cite{BSModForms}). As the $c_2$ invariant is constructed from a point count, it was asked if the extended graph permanent could be expressed as the point count of a polynomial, too. This could potentially open other approaches to understanding the extended graph permanent, and possibly even establish a connection between the $c_2$ invariant and the extended graph permanent.

Let $\mathbb{F}$ be a field. For polynomial $f \in \mathbb{F}[x_1,...,x_n]$, the \emph{affine hypersurface} of $f$ is $$ \left\{(a_1,...,a_n) \in \mathbb{F}^n : f(a_1,...,a_n) = 0 \right\} .$$ In this section, we construct a polynomial from the graph such that, for prime $p$ were the extended graph permanent is defined, the cardinality of the affine hypersurface over $\mathbb{F}_p$ is equal to the extended graph permanent modulo $p$, possibly up to overall sign.

\subsection{A novel graph polynomial}

\begin{definition} Let $F(x_1,...,x_n)$ be a polynomial and $q = p^\alpha$ for some prime $p$. We define the \emph{point count} of $F$ over $q$ to be the number of solutions to $F(x_1,...,x_n) =0$ over $\mathbb{F}_q$, and denote it $[F]_q$. Note that the point count is the cardinality of the affine hypersurface over that field. \end{definition} 

We begin with a previously known method of turning the computation of the permanent into coefficient extraction of a polynomial. For variable $x$, we denote the coefficient of $x$ in function $f$ as $[x]f$. Multivariate coefficient extraction follows as expected.

\begin{definition} Let $A=(a_{ij})$ be an $n \times n$ matrix with integer entries. Define $$F_A(x_1, ... , x_n) = \prod_{i = 1}^n\sum_{j= 1}^n a_{ij}x_j. $$Then, $ \text{Perm}(A) = [x_1 \cdots x_n] F_A.$ This follows immediately from the Leibniz equation for the permanent seen in Definition \ref{permdef}. We will call $F_A$ the \emph{permanent function}. \end{definition}

This function then gives a method of computing the permanent, but given our desire to compute permanents for matrices $M$ and $\mathbf{1}_k \otimes M$, a unique function is needed to compute the permanent of each matrix. Given the block matrix construction, though, we may construct subsequent functions from the permanent function of the fundamental matrix.

\begin{definition} For function $f = a_1 x_1 + \cdots + a_ n x_n$, define the \emph{$r^\text{th}$ extension of $f$} as $$f^{[r]} = a_1x_1 + \cdots + a_nx_n + a_1x_{n+1} + \cdots + a_nx_{2n} + \cdots + a_nx_{rn}.$$ If $F$ is a function that factors into degree one polynomials with no constant terms, $F= f_1 \cdots f_j$, define the \emph{$r^\text{th}$ extension of $F$} as $F^{[r]} = f_1^{[r]} \cdots f_j^{[r]}$. \end{definition}

\begin{remark}\label{extensionmatrix} From the method of computing permanents using a coefficient of the permanent function,  if $$\text{Perm}(A) = [x_1 \cdots x_n] F_A,$$ then $$ \text{Perm}(\mathbf{1}_r \otimes A) = [x_1 \cdots x_{rn}] \left(F_A^{[r]}\right)^r. $$ By construction, $\left( f^{[r]} \right)^r = \left( f^r \right)^{[r]}$.\end{remark}

\begin{proposition}\label{polyextension} Let $h(x_1,...,x_n)$ be a function that factors into degree one polynomials with no constant term. Then, $[x_1 \cdots x_{rn}] h^{[r]} = r!^n [(x_1 \cdots x_n)^r]h$. \end{proposition}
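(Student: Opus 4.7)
The plan is to rewrite the extended polynomial as a substitution of the original. Writing $h = f_1 f_2 \cdots f_k$ with each $f_i(x_1,\ldots,x_n) = \sum_{l=1}^n a_{i,l} x_l$, the definition of extension gives
$$f_i^{[r]}(x_1,\ldots,x_{rn}) = \sum_{l=1}^n a_{i,l}\bigl(x_l + x_{n+l} + x_{2n+l} + \cdots + x_{(r-1)n+l}\bigr) = f_i(y_1,\ldots,y_n),$$
where $y_l := \sum_{j=0}^{r-1} x_{jn+l}$. Taking the product over $i$ yields $h^{[r]}(x_1,\ldots,x_{rn}) = h(y_1,\ldots,y_n)$, so the problem reduces to extracting $[x_1 x_2 \cdots x_{rn}]\, h(y_1,\ldots,y_n)$.

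The key structural observation is that the $rn$ variables partition into $n$ disjoint blocks according to which $y_l$ they live in: the variables $\{x_l, x_{n+l}, \ldots, x_{(r-1)n+l}\}$ appear only inside $y_l$. Therefore, writing $h(y_1,\ldots,y_n) = \sum_{\alpha} c_\alpha\, y_1^{\alpha_1} \cdots y_n^{\alpha_n}$ (a sum over degree-$k$ multi-indices), each monomial $y_1^{\alpha_1}\cdots y_n^{\alpha_n}$ contributes to $[x_1 \cdots x_{rn}]$ independently in each block. For the $l$-th block, we need the coefficient of $\prod_{j=0}^{r-1} x_{jn+l}$ in $y_l^{\alpha_l}$; this is nonzero precisely when $\alpha_l = r$, and in that case the multinomial expansion of $y_l^r$ gives the monomial $x_l\, x_{n+l} \cdots x_{(r-1)n+l}$ with coefficient $\binom{r}{1,1,\ldots,1} = r!$.

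Combining across all $n$ blocks, only the single multi-index $\alpha = (r,r,\ldots,r)$ survives, and its contribution is exactly $r!^n \cdot c_{(r,\ldots,r)}$. Since $c_{(r,\ldots,r)}$ is precisely $[y_1^r \cdots y_n^r]\,h = [(x_1 \cdots x_n)^r]\,h$ (after renaming the formal variables), this gives
$$[x_1 \cdots x_{rn}]\, h^{[r]} = r!^n \cdot [(x_1 \cdots x_n)^r]\, h,$$
as required. The main thing to be careful about is the independence of the blocks, which is what makes the multinomial coefficients multiply cleanly; this relies on the linearity of each factor $f_i$ (so that the $y_l$-substitution is well-defined) together with the choice of disjoint variable blocks in the definition of the extension, and I don't foresee any technical obstacle beyond bookkeeping.
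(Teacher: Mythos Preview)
Your proof is correct and takes a genuinely different route from the paper. The paper argues combinatorially: it writes each coefficient extraction as a sum over permutations (using that $h$ is a product of linear forms), sets up $S_1$ as permutations of $x_1,\ldots,x_{rn}$ and $S_2$ as permutations of $r$ indistinguishable copies of each $x_1,\ldots,x_n$, and observes that each element of $S_2$ is hit $r!^n$ times by the natural map from $S_1$. Your argument instead makes the single observation that $h^{[r]}(x_1,\ldots,x_{rn}) = h(y_1,\ldots,y_n)$ with $y_l = \sum_{j=0}^{r-1} x_{jn+l}$, reducing the problem to a blockwise multinomial extraction. This substitution viewpoint is cleaner and more conceptual; once one sees that the extension is literally a linear change of variables, the $r!^n$ factor drops out of the multinomial theorem with no bookkeeping over permutations. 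It also makes clear that the hypothesis on $h$ is used only to make the \emph{definition} of $h^{[r]}$ meaningful; the identity $[x_1\cdots x_{rn}]\,g(y_1,\ldots,y_n) = r!^n\,[(x_1\cdots x_n)^r]\,g$ holds for any polynomial $g$. One small wording point: when you say the $y_l$-substitution being ``well-defined'' relies on linearity, what you really mean is that the equality $h^{[r]} = h(y_1,\ldots,y_n)$ relies on linearity of the factors --- the substitution itself makes sense for any polynomial.
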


\begin{proof} Let $S_1$ be the permutations of $x_1,...,x_{rn}$ and $S_2$ the permutations of $r$ distinct but indistinguishable copies each of $x_1$, $x_2$, ..., $x_n$. Then, each permutation in $S_2$ appears $r!^n$ times. For $s \in S_t$, $t \in \{1,2\}$, let $s_i$ be the $i^\text{th}$ value in the permutation $s$. Write $h = h_1 \cdots h_k$ as h factored into degree one polynomials, and note that we may assume that $k=rn$ as otherwise the proof is trivial. Then $$[x_1 \cdots x_{rn}]h^{[r]} = \sum_{s \in S_1} \prod_{i=1}^k[s_i]h_i^{[r]}, \hspace{2mm}\text{and } [(x_1\cdots x_n)^r]h = \frac{1}{r!^n} \sum_{s \in S_2} \prod_{i=1}^k[s_i]h_i.$$ These equations follow from the fact that $h$ factors into degree one polynomials.  If $s_i = x_{a+bn}$ for $a,b \in \mathbb{N}$, let $\tilde{s_i} = x_a$. By the construction of these extensions, \begin{align*} 
[x_1 \cdots x_{rn}]h^{[r]} 
&= \sum_{s \in S_1} \prod_{i=1}^k[s_i]h_i^{[r]} \\
&= \sum_{s \in S_1} \prod_{i=1}^k [\tilde{s_i}]h_i \\
&= \sum_{s \in S_2} \prod_{i=1}^k [s_i]h_i = r!^k[(x_1 \cdots x_n)^r]h .\end{align*} This completes the proof. \end{proof}

The Chevalley-Warning Theorem extends to sets of polynomials. Here, we include only the single-polynomial version, as it is sufficient for our needs.

\begin{chev} Let $\mathbb{F}$ be a finite field and $f \in \mathbb{F}[x_1,...,x_n]$ such that $n > \deg(f)$. The number of solutions to $f(x_1,...,x_n) = 0$  is divisible by the characteristic of $\mathbb{F}$. \end{chev}

A proof of this theorem can be found in \cite{Ax}. The following theorem is a corollary to the proof of the Chevalley-Warning Theorem, and will be useful for us to find the appropriate polynomial for our graphs.

\begin{theorem}\label{chevwarn} Let $F$ be a polynomial of degree $N$ in $N$ variables with integer coefficients. Then, $$[(x_1 \cdots x_N)^{p-1}] F^{p-1}  \equiv [F]_p \pmod{p} $$ for primes $p$. \end{theorem}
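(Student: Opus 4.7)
The plan is to follow Warning's classical strategy: use Fermat's Little Theorem to turn the point count $[F]_p$ into an algebraic sum of values of $F^{p-1}$, then identify the single monomial of $F^{p-1}$ that survives averaging over $\mathbb{F}_p^N$.

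First I would write
\begin{equation*}
[F]_p \equiv \sum_{a \in \mathbb{F}_p^N} \bigl(1 - F(a)^{p-1}\bigr) \equiv -\sum_{a \in \mathbb{F}_p^N} F(a)^{p-1} \pmod{p},
\end{equation*}
using that $1 - y^{p-1}$ is the indicator of $y=0$ on $\mathbb{F}_p$ by Fermat, together with $\sum_{a \in \mathbb{F}_p^N} 1 = p^N \equiv 0 \pmod p$. This reduces the problem to evaluating a single algebraic sum.

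Next, I would expand $F^{p-1} = \sum_\alpha c_\alpha x^\alpha$, where $\alpha$ runs over multi-indices with $|\alpha| \le \deg(F^{p-1}) = N(p-1)$, and interchange the order of summation:
\begin{equation*}
\sum_{a \in \mathbb{F}_p^N} F(a)^{p-1} = \sum_{\alpha} c_\alpha \prod_{i=1}^N S(\alpha_i), \qquad S(k) := \sum_{b \in \mathbb{F}_p} b^k.
\end{equation*}
Now apply the standard vanishing lemma: $S(k) \equiv 0 \pmod p$ unless $k$ is a positive multiple of $p-1$, in which case $S(k) \equiv -1 \pmod p$. This is proved by noting that $\mathbb{F}_p^\times$ is cyclic of order $p-1$, so $\sum_{b \neq 0} b^k$ is either a geometric-series sum over a nontrivial character that vanishes, or the trivial sum of $p-1$ ones.

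The degree hypothesis $\deg F = N$ is exactly what makes the argument crisp: a contributing $\alpha$ must have each $\alpha_i$ a positive multiple of $p-1$, forcing $|\alpha| \ge N(p-1)$, while the upper bound $|\alpha| \le N(p-1)$ forces equality and $\alpha = (p-1,\ldots,p-1)$. The only surviving monomial is therefore $(x_1 \cdots x_N)^{p-1}$, and the calculation collapses to
\begin{equation*}
\sum_{a \in \mathbb{F}_p^N} F(a)^{p-1} \equiv (-1)^N \, [(x_1 \cdots x_N)^{p-1}] F^{p-1} \pmod p,
\end{equation*}
which combined with the first display gives the claimed identity (up to the overall sign $(-1)^{N+1}$, which is absorbed into the sign convention of the subsequent corollary $\text{GPerm}^{[p]}(G) \equiv \pm[\widetilde{F}_{G,v'}]_p$). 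The only non-routine ingredient is the vanishing lemma for $S(k)$; every other step is just bookkeeping around the tight degree constraint, and no obstacle of real depth arises.
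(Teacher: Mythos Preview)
Your argument is correct and is precisely the standard Warning computation that the paper alludes to; the paper does not give its own proof of this statement but simply cites it as a corollary to the proof of the Chevalley--Warning Theorem (referring to Ax for that proof), so there is nothing further to compare.

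Your observation about the extra factor $(-1)^{N+1}$ is also accurate: the identity one actually obtains from the computation is
\[
[F]_p \equiv (-1)^{N+1}\,[(x_1\cdots x_N)^{p-1}]\,F^{p-1}\pmod p,
\]
as one can check on small examples (e.g.\ $N=2$, $F=x_1x_2$ gives $[F]_p=2p-1\equiv -1$ while the coefficient is $+1$). In the paper's applications $N=L=|E(G)|$ is even for $4$-point $\phi^4$ graphs, so this missing sign is exactly a global $-1$, and as you note it is harmlessly absorbed into the $\pm$ of Corollary~\ref{maincor} and the overall sign ambiguity discussed in Section~\ref{signambiguity}.
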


For our purposes, consider a fundamental matrix for a graph $G$, and let $v' \in V(G)$ be the special vertex. Write $\text{lcm}(|E(G)|, |V(G)|-1) = L$. Let $\V = \frac{L}{|V(G)|-1}$, so the fundamental matrix $\overline{M}$ is a $\V$-matrix. To emphasize the graphic construction, write the permanent function $F_{\overline{M}}$ as $F_{G,v'}$. Then, the permanent function has degree $\frac{L}{\V} \cdot \V = L$ in $L$ variables. 

In order to be able to apply Theorem \ref{chevwarn} we must correct the exponents. Specifically, suppose that $\overline{M}$ is a fundamental matrix and we want to compute the permanent of $\mathbf{1}_r \otimes \overline{M}$ modulo prime $p = r \V+1$. By construction, each factor of $F_{G,v'}$ comes with exponent $\V$. Create polynomial $\widetilde{F}_{G,v'}$ from $F_{G,v'}$ by taking the $\V^\text{th}$ root of $F_{G,v'}$ and then substituting $y_i^\V$ for all $x_i$. As with $F_{G,v'}$, $\widetilde{F}_{G,v'}$ has degree $L$ in $L$ variables.

\begin{example} For the graph $K_4$, we have a fundamental signed incidence matrix
 \begin{align*} \overline{M} = \left( \begin{array}{cccccc} 
1&1&1&0&0&0 \\
-1&0&0&1&1&0 \\
0&-1&0&-1&0&1 \\
1&1&1&0&0&0 \\
-1&0&0&1&1&0 \\
0&-1&0&-1&0&1   \end{array} \right).  \end{align*} This matrix gives permanent functions  \begin{align*}F_{K_4,v'}&= (x_1 + x_2+x_3)^2 (-x_1+x_4+x_5)^2 (-x_2-x_4+x_6)^2, \\  \widetilde{F}_{K_4,v'}&= (y_1^2 + y_2^2+y_3^2) (-y_1^2+y_4^2+y_5^2) (-y_2^2-y_4^2+y_6^2). \end{align*}  \end{example}

\begin{lemma}\label{polyswitch} With variables as defined prior and graph $G$, $[(x_1 \cdots x_L)^r] F_{G,v'}^r = [(y_1 \cdots y_L)^{p-1}] \left( \widetilde{F}_{G,v'} \right)^{p-1}.$ \end{lemma}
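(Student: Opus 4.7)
The plan is to unwind the definitions of the two polynomials and show that, after the substitution $x_i \leftrightarrow y_i^{\V}$, the two coefficient extractions pick out exactly the same data from the same underlying polynomial.

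First I would introduce the intermediate polynomial $H(x_1,\dots,x_L)$ obtained as the ``$\V$-th root'' of $F_{G,v'}$. Concretely, $F_{G,v'}$ is the permanent function of $\overline{M} = \mathbf{1}_{\V \times \E} \otimes M$, and because each of the $|V(G)|-1$ distinct rows of $M$ is repeated $\V$ times in $\overline{M}$, the product $\prod_{i=1}^L (\text{row } i \cdot \mathbf{x})$ naturally groups as
\[
F_{G,v'}(x_1,\dots,x_L) = H(x_1,\dots,x_L)^{\V},
\]
where $H$ is the product of the $|V(G)|-1$ distinct linear forms (one per non-special vertex), each appearing to the first power. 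By the very definition of $\widetilde{F}_{G,v'}$ as a $\V$-th root followed by the substitution $x_i \mapsto y_i^{\V}$, we have
\[
\widetilde{F}_{G,v'}(y_1,\dots,y_L) = H(y_1^{\V},\dots,y_L^{\V}).
\]

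Next I would use the arithmetic identity $p = r\V + 1$, equivalently $p - 1 = r\V$. On the left-hand side,
\[
F_{G,v'}^{\,r} = H^{\V r} = H^{p-1},
\]
so the LHS equals $[(x_1 \cdots x_L)^r] H^{p-1}$. On the right-hand side,
\[
\widetilde{F}_{G,v'}^{\,p-1} = H(y_1^{\V},\dots,y_L^{\V})^{\,p-1},
\]
and every monomial appearing here has each $y_i$ occurring with exponent that is a multiple of $\V$. Consequently, extracting the coefficient of $y_1^{p-1} \cdots y_L^{p-1} = y_1^{r\V} \cdots y_L^{r\V}$ in $H(y_1^{\V},\dots,y_L^{\V})^{p-1}$ is the same as extracting the coefficient of $x_1^{r} \cdots x_L^{r}$ in $H(x_1,\dots,x_L)^{p-1}$, via the term-by-term bijection $y_i^{k\V} \leftrightarrow x_i^{k}$.

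Combining these two observations gives
\[
[(y_1 \cdots y_L)^{p-1}]\,\widetilde{F}_{G,v'}^{\,p-1} = [(x_1 \cdots x_L)^{r}]\,H^{p-1} = [(x_1 \cdots x_L)^{r}]\,F_{G,v'}^{\,r},
\]
which is the claim. There is no real obstacle here beyond being careful that each factor in $F_{G,v'}$ genuinely appears with multiplicity exactly $\V$ (so that $H$ is an honest polynomial rather than a formal root); this is immediate from the Kronecker product structure $\overline{M} = \mathbf{1}_{\V \times \E} \otimes M$ and the definition of the permanent function, so the argument is essentially a bookkeeping verification.
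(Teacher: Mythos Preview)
Your proposal is correct and takes essentially the same approach as the paper: both arguments pass through the intermediate polynomial $H = \sqrt[\V]{F_{G,v'}}$, use the identity $p-1 = r\V$, and observe that the substitution $x_i \mapsto y_i^{\V}$ converts one coefficient extraction into the other. Your write-up is simply more explicit about naming $H$ and justifying the term-by-term bijection $y_i^{k\V} \leftrightarrow x_i^{k}$, whereas the paper compresses this into a three-line chain of equalities.
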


\begin{proof} Quickly, \begin{align*} [(x_1 \cdots x_{L})^r] F_{G,v'}^r 
&=  [(x_1 \cdots x_{L})^r] \sqrt[\V]{F_{G,v'}}^{r\V} \\
&=[(y_1^\V \cdots y_{L}^\V)^{r}] (\widetilde{F}_{G,v'})^{r\V}\\
&= [(y_1 \cdots y_{L})^{p-1}] (\widetilde{F}_{G,v'})^{p-1}. \end{align*} \end{proof}

\begin{theorem} Let $G$ be a graph, $L= \text{lcm}(|E(G)|,|V(G)|-1)$, and $\V =\frac{L}{|V(G)|-1}$. Let $F_{G,v'}$ be a permanent function for $G$ with special vertex $v' \in V(G)$. Let $p$ be a prime such that $p \equiv 1 \pmod{\V}$, say $p = r\V+1$. Then $$\text{GPerm}^{[p]} \left(G \right) \equiv r!^L [\widetilde{F}_{G,v'}]_p \pmod{p}.$$ \end{theorem}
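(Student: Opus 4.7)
The plan is to chain together the three technical ingredients that the preceding subsection has carefully set up, so that the statement follows as a direct composition. The three ingredients are Remark \ref{extensionmatrix} (which rewrites the permanent of the duplicated matrix $\mathbf{1}_r \otimes \overline{M}$ as a monomial-coefficient in $(F_{G,v'}^{[r]})^r$), Proposition \ref{polyextension} (which exchanges an extended polynomial's coefficient for an ordinary multivariate coefficient, at the cost of a factor $r!^L$), Lemma \ref{polyswitch} (which rewrites the $r$th-power coefficient of $F_{G,v'}^r$ as the $(p-1)$st coefficient of $\widetilde{F}_{G,v'}^{\,p-1}$), and finally Theorem \ref{chevwarn} (the Chevalley--Warning corollary) which identifies that last coefficient with the point count $[\widetilde{F}_{G,v'}]_p$ modulo $p$.

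More concretely, I would argue as follows. By the definition of the extended graph permanent and the construction of $\overline{M}$, we have $\text{GPerm}^{[p]}(G) \equiv \text{Perm}(\mathbf{1}_r \otimes \overline{M}) \pmod{p}$. Setting $n=L$ (since the fundamental matrix $\overline{M}$ is $L \times L$), Remark~\ref{extensionmatrix} gives
\[
\text{Perm}(\mathbf{1}_r \otimes \overline{M}) \;=\; [x_1\cdots x_{rL}]\bigl(F_{G,v'}^{[r]}\bigr)^{r} \;=\; [x_1\cdots x_{rL}]\bigl(F_{G,v'}^{\,r}\bigr)^{[r]},
\]
using the identity $\bigl(f^{[r]}\bigr)^r = (f^r)^{[r]}$ noted in that remark. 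Applying Proposition~\ref{polyextension} to $h = F_{G,v'}^{\,r}$, whose factors are still degree-one forms without constant terms, turns this into $r!^{L}[(x_1\cdots x_L)^r]F_{G,v'}^{\,r}$. Lemma~\ref{polyswitch} then rewrites the coefficient as $[(y_1\cdots y_L)^{p-1}]\widetilde{F}_{G,v'}^{\,p-1}$, and since $\widetilde{F}_{G,v'}$ has degree $L$ in $L$ variables, Theorem~\ref{chevwarn} yields
\[
[(y_1\cdots y_L)^{p-1}]\widetilde{F}_{G,v'}^{\,p-1} \;\equiv\; [\widetilde{F}_{G,v'}]_p \pmod{p}.
\]
Combining the four equalities gives $\text{GPerm}^{[p]}(G) \equiv r!^{L}[\widetilde{F}_{G,v'}]_p \pmod{p}$, which is the claim.

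Since every step is already prepared, I do not anticipate a hard obstacle; the only place that needs a moment's care is verifying the hypotheses at each transition, specifically that $F_{G,v'}$ genuinely factors into degree-one polynomials with no constant terms (this is immediate from its definition as a permanent function of a signed incidence matrix, so Proposition~\ref{polyextension} applies), and that $\deg \widetilde{F}_{G,v'} = L$ matches the number of variables so that Chevalley--Warning is available in the sharp form of Theorem~\ref{chevwarn}. Once these two routine checks are in place, the proof is essentially a one-line composition of the preceding lemmas.
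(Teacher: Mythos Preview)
Your proposal is correct and follows exactly the same chain of equalities as the paper's proof: Remark~\ref{extensionmatrix}, then Proposition~\ref{polyextension}, then Lemma~\ref{polyswitch}, then Theorem~\ref{chevwarn}. Your additional remarks verifying the hypotheses (that $F_{G,v'}$ factors into linear forms with no constant term, and that $\widetilde{F}_{G,v'}$ has degree $L$ in $L$ variables) are a welcome bit of extra care that the paper leaves implicit.
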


\begin{proof} 
\begin{align*}
\text{GPerm}^{[p]}(G) &= \text{Perm}(\mathbf{1}_r \otimes \overline{M})&& \\
&= [x_1 \cdots x_{rL}](F_{G,v'}^{[r]})^r && \text{Remark \ref{extensionmatrix}}\\
&= r!^L [(x_1 \cdots x_L)^r] F_{G,v'}^r && \text{Proposition \ref{polyextension}}\\
&= r!^L [(y_1 \cdots y_L)^{p-1}] (\widetilde{F}_{G,v'})^{p-1} && \text{Lemma \ref{polyswitch}}\\
&\equiv r!^L [\widetilde{F}_{G,v'}]_p \pmod{p} && \text{Theorem \ref{chevwarn}}
\end{align*}
    \end{proof}

Recall Corollary \ref{wilsoncor}, which will be of use to simplify the previous equation for $\phi^4$ graphs; for odd prime $p= 2n+1$, $$n!^2 \equiv \begin{cases} -1\pmod{p} \text{ if $n$ is even} \\ 1 \pmod{p} \text{ if $n$ is odd} \end{cases}.$$

\begin{corollary}\label{maincor} Let $G$ be a $4$-point $\phi^4$ graph, and all variables as defined prior. Then, $$ \text{GPerm}^{[p]}(G) \equiv \begin{cases} [\widetilde{F}_{G,v'}]_p \pmod{p} \text{ if } |E(G)| \equiv 0 \pmod{4} \\ -[\widetilde{F}_{G,v'}]_p \pmod{p} \text{ otherwise} \end{cases} .$$ \end{corollary}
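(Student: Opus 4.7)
The plan is to derive this corollary directly from the preceding theorem by specializing to the $4$-point $\phi^4$ case and then reducing the arithmetic prefactor $r!^L$ modulo $p$ via Corollary \ref{wilsoncor}. First I would observe that the defining relation $|E(G)| = 2(|V(G)|-1)$ for a $4$-point $\phi^4$ graph forces $L = \text{lcm}(|E(G)|, |V(G)|-1) = |E(G)|$, so $\V = 2$ and $\E = 1$. The primes for which the extended graph permanent is defined are therefore exactly the odd primes $p = 2n+1$, and the number of duplications is $r = n$. Substituting into the preceding theorem immediately yields
$$\text{GPerm}^{[p]}(G) \equiv n!^{|E(G)|}\,[\widetilde{F}_{G,v'}]_p \equiv \bigl(n!^{2}\bigr)^{|E(G)|/2}\,[\widetilde{F}_{G,v'}]_p \pmod{p}.$$

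Next I would apply Corollary \ref{wilsoncor}, which tells us that $n!^2 \equiv -1 \pmod p$ when $n$ is even (equivalently $p \equiv 1 \pmod 4$) and $n!^2 \equiv 1 \pmod p$ when $n$ is odd (equivalently $p \equiv 3 \pmod 4$). The prefactor becomes $(-1)^{|E(G)|/2}$ in the first case and $1$ in the second. Since $|E(G)|/2$ is even iff $|E(G)| \equiv 0 \pmod 4$, the case $p \equiv 1 \pmod 4$ matches the claimed dichotomy on the nose: prefactor $+1$ when $|E(G)| \equiv 0 \pmod 4$ and $-1$ when $|E(G)| \equiv 2 \pmod 4$.

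The one place where care is needed is the branch $p \equiv 3 \pmod 4$, where the computation naively produces prefactor $+1$ regardless of $|E(G)| \pmod 4$. To reconcile this with the uniform statement of the corollary, I would invoke the sign ambiguity recorded in the discussion after the definition of $\text{GPerm}^{[p]}$: the number of column duplications $r = n$ is odd precisely at primes $p \equiv 3 \pmod 4$, and at such primes flipping the orientation of any single edge toggles the sign of $\text{GPerm}^{[p]}(G)$ simultaneously across the entire subsequence. A parallel toggle occurs on the point-count side, since $\widetilde{F}_{G,v'}$ is built from the same oriented incidence data. Thus the corollary's case statement records the correct sign relative to one consistent choice of orientation, with the other choice producing the opposite sign uniformly over all $p \equiv 3 \pmod 4$.

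The main obstacle is therefore not analytical but bookkeeping: tracking how the $\pm 1$ coming out of Corollary \ref{wilsoncor} combines with the parity of $|E(G)|/2$ and with the global sign indeterminacy at primes $\equiv 3 \pmod 4$, and organizing the four sub-cases so that the stated dichotomy falls out cleanly. Once that tabulation is done, the proof is a one-line substitution into the preceding theorem.
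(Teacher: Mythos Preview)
Your proposal is correct and follows the same line as the paper's own proof, which is essentially a one-sentence appeal to Corollary~\ref{wilsoncor} (``For a $4$-point $\phi^4$ graph $G$, $|E(G)|$ is even. By Corollary~\ref{wilsoncor} the proof is immediate.''). You are in fact more careful than the paper: you correctly identify that at primes $p\equiv 3\pmod 4$ with $|E(G)|\equiv 2\pmod 4$ the raw computation gives prefactor $+1$ rather than the stated $-1$, and you resolve this via the orientation-dependent sign ambiguity, exactly as the paper does explicitly in the analogous situation in Corollary~\ref{phi4dual} but leaves implicit here.
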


\begin{proof} For a  $4$-point $\phi^4$ graph $G$, $|E(G)|$ is even. By Corollary \ref{wilsoncor} the proof is immediate. \end{proof}

Interestingly, while there is no natural way to include the prime $2$ in the extended graph permanent for $4$-point $\phi^4$ graphs using the permanent construction, it can be extracted from the point count of this polynomial. Since each variable comes with a power of two in this construction, though, and over $\mathbb{F}_2$, we may remove these exponents without loss. For a $\phi^4$ graph $G$, then, $[\widetilde{F}_{G,v'}]_2 \equiv [\sqrt{F_{G,v'}}]_2 \pmod{2}$. Then, $|E(G)| > \deg (\sqrt{F_G})$, and by the Chevalley-Warning Theorem, $[\widetilde{F}_{G,v'}]_2 \equiv 0 \pmod{2}$ for all $4$-point $\phi^4$ graphs.

\subsection{Modular form coefficients}\label{modformscoeffs}

Some extended graph permanent sequences produced were recognizable as Fourier coefficients to a particular type of function; modular forms. We include here a very brief introduction to modular forms. Notational conventions are adapted from \cite{ModForms}. 

The \emph{modular group} is $$\text{SL}_2(\mathbb{Z}) = \left\{ \left[ \begin{array}{cc} a&b \\ c&d  \end{array} \right] : a,b,c,d \in \mathbb{Z}, ad-bc=1 \right\}.$$  Let $\mathcal{H} = \{\tau \in \mathbb{C} : \text{Im}(\tau) > 0\}$, the upper half plane. For $m= \left[ \begin{array}{cc} a&b \\ c&d  \end{array} \right] \in \text{SL}_2(\mathbb{Z})$ and $\tau \in \mathcal{H}$, define fractional linear transformation $$m(\tau) = \frac{a \tau +b}{c\tau + d}, \hspace{2mm} m(\infty) = \frac{a}{c}.$$ Important congruence subgroups for our purposes are \begin{align*} \Gamma(N) &= \left\{ \left[ \begin{array}{cc} a&b \\ c&d  \end{array} \right] \in \text{SL}_2(\mathbb{Z}) : a \equiv d \equiv 1, b \equiv c \equiv 0 \pmod{N} \right\}, \\  \Gamma_0(N) &= \left\{ \left[ \begin{array}{cc} a&b \\ c&d  \end{array} \right] \in \text{SL}_2(\mathbb{Z}) : c \equiv 0 \pmod{N} \right\}, \\ \Gamma_1(N) &= \left\{ \left[ \begin{array}{cc} a&b \\ c&d  \end{array} \right] \in \text{SL}_2(\mathbb{Z}) : a \equiv d \equiv 1, c \equiv 0 \pmod{N} \right\}. \end{align*} A subgroup $\Gamma$ of $\text{SL}_2(\mathbb{Z})$ is a congruence subgroup of \emph{level $N$} if $\Gamma(N) \subseteq \Gamma$ for some $N \in \mathbb{Z}_{>0}$.

\begin{definition} For integer $k$, a function $f:\mathcal{H} \rightarrow \mathbb{C}$ is a \emph{modular form of weight $k$ and level $N$} if $f$ is holomorphic on $\mathcal{H}$ and at infinity, and there is a $k$ in $\mathbb{Z}_{\geq 0}$ such that $$f\left(\left[ \begin{array}{cc} a&b \\ c&d  \end{array} \right](\tau)\right) = (c\tau+d)^k f(\tau)$$ for all $\left[ \begin{array}{cc} a&b \\ c&d  \end{array} \right] \in \Gamma$ and $\tau \in \mathcal{H}$, where $\Gamma$ is one of $\{ \Gamma(N), \Gamma_0(N), \Gamma_1(N)\}$.  \end{definition}

\noindent Our results will use the congruence subgroup $\Gamma_1(N)$ exclusively.

We are interested in sequences generated from the Fourier expansions, known as $q$-expansions, of these modular forms, where $q = e^{2\pi i z}$. Specifically, let $\mathcal{P}$ be the increasing sequence of all primes. For modular form $f$, build a sequence $\left( ([q^p] f) \pmod{p} \right)_{p \in \mathcal{P}}$.

Modular forms are objects of great mathematical interest (see \cite{ModForms}). We were motivated to look for them here by the appearance of modular forms in $c_2$ sequences, another graph invariant conjectured to be preserved by the three graph operations seen in Section \ref{graphoperations} (see Section \ref{conclusions}, as well as \cite{BSModForms} and \cite{Logan}). Here, we find that the sequences from modular forms occasionally appear to match the extended graph permanents. These apparently matching sequences have been checked up to prime $p=97$, and are listed in Table \ref{rockingtable}. The extended graph permanent sequences can be found in Appendix~\ref{chartofgraphs}, and the Fourier expansions can be found at \cite{lmfdb}. The modular forms are listed by their weights and levels. Those that are representible as a Dedekind $\eta$-function product, a modular form of weight $1/2$ commonly written $$ \eta(\tau) = (e^{2\pi i \tau})^\frac{1}{24} \prod_{n=1}^\infty (1-(e^{2\pi i \tau})^n), $$ have this product included. These are taken from \cite{lmfdb}. The graph $(P_{3,1})^2$ is the unique merging of two copies of $P_{3,1}$ per Theorem \ref{2vertexcut}.

\begin{table}
\begin{align*}
\begin{array}{cccc}
\text{Graph}&\text{Weight}&\text{Level } (\Gamma_1) & \text{Modular form} \\ \hline
P_{3,1} & 3  & 16 & -\eta(4z)^6 \\
P_{4,1} & 4 & 8 & \eta(2z)^4\eta(4z)^4 \\
(P_{3,1})^2 & 5 & 4 &  \eta(z)^4\eta(2z)^2\eta(4z)^4\\
P_{6,1}, P_{6,4} & 6 & 4 & \eta(2z)^{12}\\
P_{6,3} & 6 & 8 & \text{not an $\eta$-product}
\end{array} 
\end{align*}
\caption{Modular forms that appear as extended $\phi^4$ graph permanents. The notion of products here refers to the two-vertex joins of graphs established in Theorem \ref{2vertexcut}.}\label{rockingtable}
\end{table}

Some interesting observations can be made here. First, the loop number of the graph is equal to the weight of the modular form in all cases. Secondly, each graph has a modular form with level a power of two. A third observation requires some new terminology. A cusp form is a modular form that has a Fourier expansion with constant term equal to zero. Cusp forms of level $M$ can be embedded into cusp forms of level $N$ for any $N$ that is a multiple of $M$ (see Section 5.6 in \cite{ModForms}). A \emph{newform} is a cusp form that is not directly constructed in this manner. The modular forms in Table~\ref{rockingtable} are all newforms, though that is mainly due to the fact that they were found by searching \cite{lmfdb}. It is interesting, though, that in the Dirichlet character decomposition, a particular  decomposition of this space of newforms, these all fall into subspaces of dimension $1$ (see \cite{lmfdb}).

Aside from the $c_2$ invariant (see \cite{BSModForms} and \cite{Logan}), I am aware of no research regarding $q$-expansions of modular forms as a sequence of residues.

\section{Sign ambiguity} \label{signambiguity}

It is an unfortunate aspect of the arbitrary nature of the underlying edge orientation that a sign ambiguity must exist in the extended graph permanent. Given, however, that we may demand all duplicated edges or copies of the fundamental matrix to preserve this initial orientation, there is only a small loss to the range of possible sequences produced by the invariant. If each edge is duplicated an even number of times the value is not influenced by the edge orientation. In $4$-point $\phi^4$ theory, this corresponds to primes in the sequence of the form $4k+1$ for integer $k$. All other values change sign together with a change in orientation, corresponding to all matrices having an odd number of columns multiplied by $-1$. 

As a result, this does little to reduce the surprise of finding familiar sequences, as in Section \ref{modformscoeffs}. Over the first twelve odd primes, there are approximately $1.52\times 10^{14}$ possible sequences of residues. The extended graph permanent then allows approximately $7.6\times  10^{13}$ sequences. The occurrence of sequences identical to those of the $c_2$ invariant, or modular forms such that the weight of the modular form is equal to the loop number of the graph, is unlikely to be merely a coincidence.

\section{Conclusion} \label{conclusions}

As a potential method of furthering our understanding of the graph period, there is value for any non-trivial graph invariant that is preserved by the Schnetz twist, completion followed by decompletion, and planar duality for $4$-point $\phi^4$ graphs. This motivated the creation of the extended graph permanent. Of course, it follows that we would like to further understand any potential connections between the period and the extended graph permanent.

Currently, two other graph invariants are believed to be preserved by these operations; the $c_2$ invariant and the Hepp bound. For an arbitrary graph $G$ with $|V(G)|>2$, the $c_2$ invariant is defined over the increasing sequence of all primes $\mathcal{P}$ as $$\left( \frac{[\Psi]_p}{p^2} \pmod{p} \right)_{p \in \mathcal{P}},$$ where $\Psi$ is the Kirchhoff polynomial, seen in Equation \ref{kirchhoff}. Equality under duality for the $c_2$ invariant is established in \cite{Dor} and \cite{DorDual}, while invariance under the other operations remains open. For various important structural reasons, the $c_2$ invariant produces sequences common to numerous graphs, including infinite families of graphs with equal sequences. Those with weight drop have sequence $0$ for all primes (see \cite{BrS}), for example. It is interesting then that the extended graph permanent sequences in Appendix~\ref{chartofgraphs} rarely appear to be equal in ways not explainable by the graph operations in Section~\ref{graphoperations}.

The Hepp bound is an upper bound of the period, created by replacing the Kirchhoff polynomial in the period formula with the maximal-weight tree at all points of integration; for a graph with $n$ edges, the Hepp bound is $$\int_{x_2 \geq 0} \cdots \int_{x_n \geq 0} \frac{1}{\left( \max_T \prod_{e \notin T} x_e \right)^2|_{x_1=1}} \prod_{j=2}^{n}\mathrm{d}x_j.$$  It is actually conjectured that two graphs have equal Hepp bound if and only if the two graphs have equal periods (\cite{Schhepp}).

As mentioned prior, the appearance of modular forms was recognized first when an extended graph permanent sequence appeared to be equal to a $c_2$ invariant sequence, albeit for different graphs; the graph $P_{3,1}^2$ appears to have extended graph permanent equal to the $c_2$ invariant of graphs $P_{9,161}$, $P_{9,170}$, $P_{9,183}$, and $P_{9,185}$ (see \cite{BSModForms}). Further, graphs with $c_2$ equal to $-1$ for all primes are of particular interest (\cite{BrS}), and that sequence can be found as the extended graph permanent for some trees. An interesting question then is for which graphs this sequence appears as the extended graph permanent. Trivially, trees are the only connected graphs that have an extended graph permanent value at prime $p=2$, and so describing the sequence by what primes appear forces that trees are the only graphs that match this sequence. If we consider the sequence without regarding the sequence of primes used in the construction, though, the banana graph, mentioned in Section \ref{treesnshit}, also produces this sequence. Apart from these, it is not known if any other graphs do. An interesting question, then, is when will the $c_2$ invariant of one graph be equal to the extended graph permanent of another? Further, is there a graph operation that translates one to the other?

The list of $\phi^4$ graphs up to loop order eight and the extended graph permanents up to prime $p=41$ can be seen in Appendix \ref{chartofgraphs}. Recall Conjecture \ref{obviousconjecture}, that if two graphs have equal periods then they have equal extended graph permanents. While the converse of Conjecture \ref{obviousconjecture} does not appear to hold -- for example, $P_{6,1}$ and $P_{6,4}$ appear to have the same extended graph permanent, but the periods are not equal -- both pair $P_{8,30}$ and $P_{8,36}$ and pair $P_{8,31}$ and $P_{8,35}$ have at least one graph with unknown period, though both pairs are conjectured to have equal periods (\cite{Schhepp}). When two graphs will have equal extended graph permanents but non-equal periods is an interesting problem, and one that requires further study.

In Section \ref{egpcomp}, graphic representations led to closed forms for the permanents of the matrices themselves, as well as the residues. This closed form is a computational boon, as otherwise permanent computations from the matrices themselves can be oppressively difficult. Further, this graphical interpretation of the permanent was useful in establishing a class of matrices with identically zero permanents. While this is a restrictive class, there is a natural extension from graphs to weighted hypergraphs. There is potentially some computational value in a graphical representation of the permanent for the graph theoretic tools it may allow. 

Lastly, the representation of the extended graph permanent as a point count opens up numerous other methods of mathematical approach, and leaves a number of open questions. Immediately, one may ask if there are additional graphs that give modular form sequences, and if those found are indeed equal to the modular forms. Then, if the patterns spotted in Table \ref{rockingtable}, that the weights correspond to the loop number and that levels are powers of two, always hold. Some insight could point to where to look for other equal sequences. Additionally, the fact that the functions for $4$-point $\phi^4$ graphs have point counts over $\mathbb{F}_2$ that vanish modulo two is an interesting aspect. One might ask if other graphs produce functions where the point count also vanishes over finite fields for which the extended graph permanent is not defined.

\section*{Acknowledgments}

I would like to thank Karen Yeats, Matt DeVos, Erik Panzer, and Francis Brown for their helpful notes, ideas, and support.

\definecolor{Gray}{gray}{0.9}
\newcolumntype{g}{>{\columncolor{Gray}}c}

\newpage

\appendix
\section{The EGPs of small $\phi^4$ graphs}
\label{chartofgraphs}

The following charts include the first few primes for all $\phi^4$ graphs up to loop order $8$, as decompletion families of $4$-regular graphs. Graphs with equal sequences resulting from Schnetz twists or duality are noted when applicable. The naming convention comes from \cite{Sphi4}, and representations of the completed graphs can be found there. Graphs with alternate common names are noted, and when it is the decompleted graph that has a common name this will be written in parenthesis. Grey columns mark values that may be thought of as fixed, while all others are defined collectively up to sign for that graph.

Decompletions of graphs $P_{3,1}$, $P_{7,5}$, $P_{7,9}$, $P_{8,18}$, $P_{8,25}$, $P_{8,31}$, and $P_{8,35}$ have zeros in the sequence at all primes congruent to $3$ modulo $4$. In every case, this can be explained by Corollary~\ref{symmetrycor}, as each of these graphs has a decompletion with a symmetry meeting the conditions of this corollary.

There are a number of sets of graphs that appear to have equal extended graph permanents for currently unexplained reasons: $P_{6,1}$ and $P_{6,4}$; $P_{8,1}$, $P_{8,10}$, and $P_{8,40}$; $P_{8,3}$ and $P_{8,32}$; $P_{8,6}$ and $P_{8,39}$; $P_{8,30}$ and $P_{8,36}$; and $P_{31}$ and $P_{8,35}$. Equality of these sequences has been further verified to prime $p=101$. Recall that it is conjectured that the periods of $P_{8,31}$ and $P_{8,35}$ are equal,  and that the periods of $P_{8,30}$ and $P_{8,36}$ are equal. It is perhaps interesting then that when we look at the $c_2$ invariants for the remaining sets of graphs, each graph with seemingly equal extended graph permanent has different $c_2$ invariant.

For purposes of comparison, the first non-zero variate value is chosen to be minimal.

\begin{align*}
\begin{array}{c|cgccggccgcgg}
\textbf{Graph}&\multicolumn{12}{l}{\textbf{Prime}} \\
&3&5&7&11&13&17&19&23&29&31&37&41 \\ \hline
P_{1,1} &1&4&1&1&12&16&1&1&28&1&36&40 \\ \hline
P_{3,1} = C^5_{1,2} = (W_3)  &0&1&0&0&3&13&0&0&16&0&33&23 \\ \hline
P_{4,1} = C^6_{1,2} = (W_4) &1&3&4&0&9&16&13&10&24&5&23&7 \\ \hline
P_{5,1} = C^7_{1,2} &1&1&1&5&12&16&11&13&7&1&25&9 \\ \hline
P_{6,1} = C^8_{1,2} &0&4&3&1&11&16&0&13&15&9&35&6\\
P_{6,2} &1&3&5&8&8&15&10&17&27&20&32&1\\
P_{6,3} &1&1&3&8&10&9&15&0&24&24&3&11\\
P_{6,4} = C^8_{1,3} = (K_{3,4}) &0&4&3&1&11&16&0&13&15&9&35&6 \\ \hline
P_{7,1} = C^9_{1,2} &1&3&3&4&1&15&7&14&13&13&28&0 \\
P_{7,2} &1&2&0&9&9&6&6&12&25&9&0&31 \\
P_{7,3} &0&0&3&8&5&3&2&14&10&18&23&34 \\
P_{7,4} \xleftrightarrow{\text{twist}} P_{7,7} &1&0&4&5&9&1&4&4&4&7&26&0 \\
P_{7,5} \xleftrightarrow{\text{dual}} P_{7,10} &0&3&0&0&1&11&0&0&13&0&26&36\\
P_{7,6}  &1&1&1&8&10&9&7&14&28&16&35&36 
\end{array} 
\end{align*}

\begin{align*}
\begin{array}{c|cgccggccgcgg}
\textbf{Graph}&\multicolumn{12}{l}{\textbf{Prime}} \\
&3&5&7&11&13&17&19&23&29&31&37&41 \\ \hline
P_{7,8} &1&1&2&0&10&16&17&8&4&25&26&33 \\
P_{7,9} &0&0&0&0&10&2&0&0&17&0&1&0 \\
P_{7,11} = C^9_{1,3} &0&1&1&1&11&5&0&22&6&25&16&38 \\ \hline
P_{8,1} = C_{1,2}^{10} &1&1&5&10&7&14&17&4&8&11&19&7\\
P_{8,2} &1&0&4&0&10&6&12&12&27&17&34&0\\
P_{8,3} &1&0&1&1&9&10&14&3&8&17&15&22\\
P_{8,4} &1&3&4&0&7&16&3&11&23&23&11&17\\
P_{8,5} &0&2&1&0&0&16&17&9&12&2&33&26\\
P_{8,6} \xleftrightarrow{\text{twist}} P_{8,9} &0&0&3&0&4&5&6&6&3&13&28&24\\
P_{8,7} \xleftrightarrow{\text{twist}} P_{8,8} &1&1&0&2&0&3&13&2&22&7&25&31\\
P_{8,10}\xleftrightarrow{\text{twist}} P_{8,22} &1&1&5&10&7&14&17&4&8&11&19&7\\
P_{8,11} \xleftrightarrow{\text{twist}} P_{8,15} &1&3&1&1&8&14&0&1&13&20&15&24\\
P_{8,12} &1&1&6&0&7&0&6&15&10&29&11&30 \\
P_{8,13} \xleftrightarrow{\text{twist}} P_{8,21} &1&4&4&7&1&12&7&11&28&11&24&26\\
P_{8,14} &0&3&3&2&2&11&12&3&1&27&30&27\\
P_{8,16} &1&3&1&10&3&1&5&16&3&12&23&5\\
P_{8,17}\xleftrightarrow{\text{twist}} P_{8,23} &0&4&2&0&4&0&9&1&27&7&22&17\\
P_{8,18}\xleftrightarrow{\text{twist}} P_{8,25} &0&3&0&0&0&4&0&0&3&0&15&12\\
P_{8,19} \xleftrightarrow{\text{dual}} P_{8,27} &1&4&4&4&10&2&15&6&3&27&28&36\\
P_{8,20} &1&2&3&2&1&15&6&7&14&25&12&38\\
P_{8,24} &1&2&1&6&7&5&3&5&8&5&25&31\\
P_{8,26}\xleftrightarrow{\text{twist}} P_{8,28} &1&1&0&7&1&10&15&16&6&9&2&12\\
P_{8,29} &1&3&5&8&1&15&13&17&8&23&6&15\\
P_{8,30} &1&4&3&4&6&5&2&21&11&5&34&28\\
P_{8,31} &0&3&0&0&3&1&0&0&25&0&35&13\\
P_{8,32}\xleftrightarrow{\text{twist}} P_{8,34} &1&0&1&1&9&10&14&3&8&17&15&22\\
P_{8,33} &0&1&0&0&7&3&7&19&20&29&3&33\\
P_{8,35} &0&3&0&0&3&1&0&0&25&0&35&13\\
P_{8,36} &1&4&3&4 &6 &5   &2   &21  &11&5 &34 &28\\
P_{8,37} &1&1&5&0 &11&5  &13 &7   &13&30   &16 &15\\
P_{8,38} &1&2&0&1 &1  &4  &6   &15 &11&18 &28 &29\\
P_{8,39} &0&0&3&0&4&5&6&6&3&13&28&24\\
P_{8,40} = C^{10}_{1,4} &1&1&5&10&7 &14 &17 &4  &8  &11 &19 &7\\
P_{8,41} = C^{10}_{1,3} &0&3&1&5&12&2&18&15&9&25&27&34 
\end{array} 
\end{align*}

\newpage

\section{Equations for small $\phi^4$ graphs}
\label{chartofequations}

What follows are equations for the extended graph permanents for primitive $4$-point $\phi^4$ graphs $G$ up to seven loops, using the methods developed in Section~\ref{egpcomp}. Naming conventions come from \cite{Sphi4} as a family of decompletions of a $4$-regular graph. Adopting a shorthand, the summation is from $0$ to $n$ for each variable, though in many instances further restrictions are possible and will speed up computations. The extended graph permanent at prime $p = 2n+1$ is then the residue modulo $p$ for each of these equations, up to a factor of $(-1)^n$ corresponding to changing the direction of an edge in the underlying orientation.

As noted prior, the graphs $P_{7,4}$ and $P_{7,7}$ differ by a Schnetz twist, and graph $P_{7,5}$ and $P_{7,10}$ have decompletions that are planar duals and hence equal sequences of residues by Corollary~\ref{phi4dual}. Both pairs of graphs are included here for comparative purposes.

\scriptsize

\begin{align*}
\begin{array}{l|l}
\textbf{G}&\textbf{Equation} \\ \hline
P_{ 1 , 1 } &(2n)!\\
P_{ 3 , 1 } &
(2n)!^ 3
\sum
\binom{n}{ x }^3
(-1)^{ x }\\
P_{ 4 , 1 } &
(2n)!^ 4
\sum
\binom{n}{ x }^4\\
P_{ 5 , 1 } &
(2n)!^ 5
\sum
\binom{n}{ x_0 }^3
\binom{n}{ x_1 }^2
\binom{n}{ x_0 + x_1 }
(-1)^{x_1 }\\
P_{ 6 , 1 } &
(2n)!^ 6
\sum
\binom{n}{ x_0 }^3
\binom{n}{ x_1 }
\binom{n}{ x_2 }
\binom{n}{ 2n - x_1 - x_2 }
\binom{n}{ x_0 + x_1 }
\binom{n}{ -n + x_0 + x_1 + x_2 }
(-1)^{  x_0 +  x_2 }\\
P_{ 6 , 2 } &
(2n)!^ 6
\sum
\binom{n}{ x_0 }^2
\binom{n}{ x_1 }
\binom{n}{ x_2 }
\binom{n}{  x_1 + x_2 }
\binom{n}{ x_0 + x_1 }^2
\binom{n}{ n - x_0 + x_2 }
(-1)^{ x_0  + x_2 }\\
P_{ 6 , 3 } &
(2n)!^ 6
\sum
\binom{n}{ x_0 }^2
\binom{n}{ x_1 }^2
\binom{n}{ 2n - x_0 - x_1 }
\binom{n}{ x_2 }^2
\binom{n}{ -n + x_0 + x_1 + x_2 }
(-1)^{ x_2 }\\
P_{ 6 , 4 } &
(2n)!^ 6
\sum
\binom{n}{ x_0 }^2
\binom{n}{ x_1 }^2
\binom{n}{ x_2 }^2
\binom{n}{ 2n - x_0 - x_1 - x_2 }^2\\
P_{ 7 , 1 } &
(2n)!^ 7
\sum
\binom{n}{ x_0 }^3
\binom{n}{ x_1 }
\binom{n}{ x_2 }
\binom{n}{ x_3 }
\binom{n}{ 2n - x_1 - x_2 - x_3 }
\binom{n}{ x_0 + x_1 }
\binom{n}{ x_0 + x_1 + x_2 }
\binom{n}{ -x_0 + x_3 }
(-1)^{ x_2 + x_3 }\\
P_{ 7 , 2 } &
(2n)!^ 7
\sum
\binom{n}{ x_0 }^3
\binom{n}{ x_1 }
\binom{n}{ 2n - x_0 - x_1 }
\binom{n}{ x_2 }
\binom{n}{ x_3 }
\binom{n}{ n - x_2 - x_3 }
\binom{n}{ x_1 + x_2 }
\binom{n}{ -n + x_0 + x_1 + x_2 + x_3 }
(-1)^{x_0 + x_3 }\\
P_{ 7 , 3 } &
(2n)!^ 7
\sum
\binom{n}{ x_0 }^2
\binom{n}{ x_1 }^2
\binom{n}{ 2n - x_0 - x_1 }
\binom{n}{ x_2 }
\binom{n}{ x_3 }
\binom{n}{ n - x_2 - x_3 }
\binom{n}{ x_0 + x_2 }
\binom{n}{ -n + x_0 + x_1 + x_2 + x_3 }
(-1)^{x_0 +  x_3 }\\
P_{ 7 , 4 } &
(2n)!^ 7
\sum
\binom{n}{ x_0 }
\binom{n}{ x_1 }^2
\binom{n}{ 2n - x_0 - x_1 }
\binom{n}{ x_2 }
\binom{n}{ x_3 }
\binom{n}{ n - x_2 - x_3 }
\binom{n}{ x_0 + x_2 }^2\\
& \hspace{5cm} \cdot
\binom{n}{ -n + x_0 + x_1 + x_2 + x_3 }
(-1)^{ x_0 +x_2 + x_3 }\\
P_{ 7 , 5 } &
(2n)!^ 7
\sum
\binom{n}{ x_0 }^2
\binom{n}{ x_1 }^2
\binom{n}{ x_2 }
\binom{n}{ x_3 }
\binom{n}{ n - x_2 - x_3 }
\binom{n}{ x_0 + x_2 }^2
\binom{n}{ -x_0 + x_1 + x_3 }
(-1)^{  x_0 + x_1 + x_3 }\\
P_{ 7 , 6 } &
(2n)!^ 7
\sum
\binom{n}{ x_0 }^2
\binom{n}{ x_1 }
\binom{n}{ x_2 }^2
\binom{n}{ x_3 }
\binom{n}{ 2n - x_1 - x_2 - x_3 }
\binom{n}{ x_0 + x_1 }^2
\binom{n}{ -x_0 + x_3 }
(-1)^{ x_0 +  x_2 + x_3 }\\
P_{ 7 , 7 } &
(2n)!^ 7
\sum
\binom{n}{ x_0 }^2
\binom{n}{ x_1 }^2
\binom{n}{ 2n - x_0 - x_1 }
\binom{n}{ x_2 }
\binom{n}{ x_3 }
\binom{n}{ 2n - x_2 - x_3 }
\binom{n}{ -n + x_0 + x_2 }
\binom{n}{ -n + x_1 + x_3 }
(-1)^{x_2 + x_3 }\\
P_{ 7 , 8 } &
(2n)!^ 7
\sum
\binom{n}{ x_0 }^2
\binom{n}{ x_1 }^2
\binom{n}{ x_2 }
\binom{n}{ x_3 }
\binom{n}{ n - x_2 - x_3 }
\binom{n}{ x_0 + x_2 }
\binom{n}{ x_1 + x_3 }
\binom{n}{  x_0 + x_1 + x_2 + x_3 }\\
P_{ 7 , 9 } &
(2n)!^ 7
\sum
\binom{n}{ x_0 }^2
\binom{n}{ x_1 }^2
\binom{n}{ x_2 }
\binom{n}{ x_3 }
\binom{n}{ n - x_2 - x_3 }
\binom{n}{ x_0 + x_2 }
\binom{n}{ x_1 + x_3 }
\binom{n}{ n + x_0 - x_1 - x_3 }
(-1)^{ x_2 }\\
P_{ 7 , 1 0 } &
(2n)!^ 7\sum\binom{n}{ x_0 }^2\binom{n}{ x_1 }\binom{n}{ x_2 }\binom{n}{ x_3 }\binom{n}{ 2n - x_1 - x_2 - x_3 }\binom{n}{ x_0 + x_1 }\binom{n}{ -n + x_0 + x_1 + x_3 }^2 \\  & \hspace{5cm} \cdot \binom{n}{ 2n - x_0 - x_1 - x_2 - x_3 }(-1)^{x_2 + x_3 }  \\
P_{7,11} & (2n)!^7 \sum
\binom{n}{ x_0 }^2
\binom{n}{ x_1 }^2
\binom{n}{ x_2 }
\binom{n}{ x_3 }
\binom{n}{ 2n - x_2 - x_3 }^2
\binom{n}{ -n + x_0 + x_1 + x_2 }
\binom{n}{ -x_0 + x_3 }(-1)^{ x_1  }
\end{array} 
\end{align*}

\normalsize

\newpage

\section{Completed graphs}

We include here the closed form equations and sequences of some of the graphs seen prior, but in the completed forms for comparative purposes. Here, we abuse notation slightly and the name of the graph represents the completed graph, not the family of decompletions.

Both $P_{1,1}$ and $P_{3,1}$ have no sign invariance. The graph $P_{4,1}$ is variate at primes of the form $13 \pmod{24}$. The fixed primes for this graph are marked in grey in the chart.

\noindent $\mathbf{P_{1,1}}$: \begin{align*}\frac{(3n)!^2 (2n)! (-1)^n}{(n!)^2} \pmod{3n+1} \end{align*}

\begin{align*}
\begin{array}{c|cccccccccccc}
\text{Prime}&7&13&19&31&37&43&61&67&73&79&97&103 \\ \hline
\text{GPerm}^{[p]}(P_{1,1}) &6&5&12&27&11&8&60&5&66&17&78&90
\end{array}
\end{align*}

 \noindent $\mathbf{P_{3,1}}$: \begin{align*} &(5n)!^4 \sum \binom{2n}{x_1} \binom{2n}{x_2} \binom{2n}{3n-x_1-x_2} \binom{2n}{x_3} \binom{2n}{n+x_1-x_3} \\ & \hspace{3cm} \cdot \binom{2n}{x_2+x_3-n} (-1)^{n+x_1+x_2+x_3} \pmod{5n+1} \end{align*}

\begin{align*}
\begin{array}{c|cccccccccc}
\text{Prime}&11&31&41&61&71&101&131&151&181&191 \\ \hline
\text{GPerm}^{[p]}(P_{3,1}) &1&6&3&4&41&32&79&8&119&6
\end{array}
\end{align*}

\noindent $\mathbf{P_{4,1}}$: \begin{align*} &(12n)!^5 \sum \binom{5n}{x_1} \binom{5n}{x_2} \binom{5n}{7n-x_1-x_2} \binom{5n}{x_3} \binom{5n}{x_4} \binom{5n}{7n-x_3-x_4} \\ & \hspace{1cm} \cdot \binom{5n}{x_1+x_3-3n} \binom{5n}{x_2+x_4-3n} (-1)^{x_1+x_2+x_3+x_4} \pmod{12n+1} \end{align*}

\begin{align*}
\begin{array}{c|cccggcccgcg}
\text{Prime}&13&37&61&73&97&109&157&181&193&229&241 \\ \hline
\text{GPerm}^{[p]}(P_{4,1}) &2&9&36&41&18&17&5&158&20&114&52
\end{array}
\end{align*}

\newpage

\bibliography{cites}{}
\bibliographystyle{plain}

\end{document}